\numberwithin{equation}{section}
\newtheorem{theorem}{Theorem}[section]
\newtheorem{proposition}[theorem]{Proposition}
\newtheorem{corollary}[theorem]{Corollary}
\newtheorem{lemma}[theorem]{Lemma}
\theoremstyle{definition}
\newtheorem{remark}[theorem]{Remark}
\newcommand{\R}{\mathbb{R}}
\newcommand{\N}{\mathbb{N}}
\begin{document}

\title
 [Doubly a critical system with partial weight ]
 {On a doubly critical system involving fractional Laplacian with partial weight} \footnotetext{*This work was supported by National Natural Science Foundation of China (Grant No. 11771166), Hubei Key Laboratory of Mathematical Sciences and Program for Changjiang Scholars and Innovative Research Team in University $\# $ IRT17R46, The excellent doctorial dissertation cultivation from Central China Normal University (Grant No. 2019YBZZ064).}

\maketitle
 \begin{center}
\author{Tao Yang}
\footnote{Corresponding Author: Tao Yang. T. Yang's Email addresses: yangt@mails.ccnu.edu.cn.}
\end{center}

\begin{center}
\address{Hubei Key Laboratory of Mathematical Sciences and School of Mathematics and Statistics, Central China
Normal University, Wuhan, 430079, P. R. China }
\end{center}

\maketitle
\begin{abstract}
In this paper, firstly, we consider the existence of nontrivial weak solutions to a doubly critical system involving fractional Laplacian  in $\R^{n}$ with partial weight:
\begin{equation} \label{eq0.1}
 \left\{ \begin{gathered}
  (-\Delta)^{s}u-{\gamma_1} {\frac{u}{|x'|^{2s}}}={\frac{{|u|}^{ {2^{*}_{s}}(\beta)-2}u}{|x'|^{\beta}}}+{\frac{{|u|}^{\frac{{2^{*}_{s}}(\alpha)}{2} -2}u {|v|}^{\frac{{2^{*}_{s}}(\alpha)}{2}}}{|x'|^{\alpha}}}    \hfill \\
  (-\Delta)^{s}v-{\gamma_2} {\frac{v}{|x'|^{2s}}}={\frac{{|v|}^{ {2^{*}_{s}}(\beta)-2}v}{|x'|^{\beta}}}+{\frac{{|v|}^{\frac{{2^{*}_{s}}(\alpha)}{2} -2}v {|u|}^{\frac{{2^{*}_{s}}(\alpha)}{2}}}{|x'|^{\alpha}}}     \hfill \\
\end{gathered} \right.
\end{equation}
where $s \!\in\!(0,1)$, $0\!<\! \alpha,\beta\!<\!2s\!<\!n$, $0\!< \!m\!<\!n$, $\gamma_1,\gamma_2\!<\!\gamma_{H}$, $x\!=\!(x',x'') \in \mathbb{R}^{m} \times \mathbb{R}^{n-m}$, ${2^{*}_{s}}(\alpha)\!=\!\frac{2(n-\alpha)}{n-2s}$ and $\gamma_{H}\!=\!\gamma_{H}(n,m,s)>0$ is some explicit constant.

To this end, we establish a new improved Sobolev inequality based on a weighted Morrey space. To be precise, there exists $C=C(n,m,s,\alpha)>0$ such that for any $u,v \in {\dot{H}}^s(\R^{n})$ and for any $\theta \in (\bar{\theta},1)$, it holds that
\begin{equation} \label{eq0.3}
\Big( \int_{ \R^{n} }  \frac{ |(uv)(y)|^{\frac{2^*_{s}(\alpha)}{2} } }  {  |y'|^{\alpha} } dy  \Big)^{ \frac{1}{  2^*_{s} (\alpha)  }}  \leq C ||u||_{{\dot{H}}^s(\R^{n})}^{\frac{\theta}{2}}
 ||v||_{{\dot{H}}^s(\R^{n})}^{\frac{\theta}{2}} ||(uv)||^{\frac{1-\theta}{2}}_{  L^{1,n-2s+r}(\R^{n},|y'|^{-r}) },
\end{equation}
where $s \!\in\! (0,1)$, $0\!<\!\alpha\!<\!2s\!<\!n$, $2s\!<\!m\!<\!n$, $\bar{\theta}=\max \{ \frac{2}{2^*_{s}(\alpha)}, 1-\frac{\alpha}{s}\cdot\frac{1}{2^*_{s}(\alpha)}, \frac{2^*_{s}(\alpha)-\frac{\alpha}{s}}{2^*_{s}(\alpha)-\frac{2\alpha}{m}} \}$, $r=\frac{2\alpha}{ 2^*_{s}(\alpha) }$ and $y\!=\!(y',y'') \in \mathbb{R}^{m} \times \mathbb{R}^{n-m}$.

By using mountain pass lemma and (\ref{eq0.3}), we obtain a nontrivial weak solution to (\ref{eq0.1}) in a direct way. Secondly, we extend inequality (\ref{eq0.3}) to more general forms on purpose of studying some general systems with partial weight, involving p-Laplacian especially.

{\bf Key words }: partial weight; fractional Laplacian; double critical exponents; System; weighted Morrey space; improved Sobolev inequality.

{\bf 2010 Mathematics Subject Classification }: 35A01, 35A23, 35B33, 35J50

\end{abstract}

\maketitle

\section{Introduction and Main Result}

\setcounter{equation}{0}
In this paper, firstly, we consider the existence of nontrivial weak solutions to a doubly critical system involving fractional Laplacian in $\R^{n}$ with partial weight:
\begin{equation} \label{eq1.1}
\left\{ \begin{gathered}
  (-\Delta)^{s}u-{\gamma_1} {\frac{u}{|x'|^{2s}}}={\frac{{|u|}^{ {2^{*}_{s}}(\beta)-2}u}{|x'|^{\beta}}}+{\frac{{|u|}^{\frac{{2^{*}_{s}}(\alpha)}{2} -2}u {|v|}^{\frac{{2^{*}_{s}}(\alpha)}{2}}}{|x'|^{\alpha}}}    \hfill \\
  (-\Delta)^{s}v-{\gamma_2} {\frac{v}{|x'|^{2s}}}={\frac{{|v|}^{ {2^{*}_{s}}(\beta)-2}v}{|x'|^{\beta}}}+{\frac{{|v|}^{\frac{{2^{*}_{s}}(\alpha)}{2} -2}v {|u|}^{\frac{{2^{*}_{s}}(\alpha)}{2}}}{|x'|^{\alpha}}}     \hfill \\
\end{gathered} \right.
\end{equation}
where $s \!\in\!(0,1)$, $0\!<\! \alpha,\beta\!<\!2s\!<\!n$, $0\!< \!m\!<\!n$, $\gamma_1,\gamma_2\!<\!\gamma_{H}$, $x\!=\!(x',x'') \in \mathbb{R}^{m} \times \mathbb{R}^{n-m}$, ${2^{*}_{s}}(\alpha)\!=\!\frac{2(n-\alpha)}{n-2s}$ and $\gamma_{H}\!=\!\gamma_{H}(n,m,s)>0$ is some explicit constant. Secondly, we will extend the existence results to some general systems with partial weight, involving p-Laplacian especially. We refer to the weight $\frac{1}{|x'|^{(\cdot)}}$ as \textbf{partial weight}.
Noticing that ${2^{*}_{s}}(\alpha)$ is the critical fractional Hardy-Sobolev exponent and  $\gamma_{H}$ is the best fractional Hardy constant on $\R^{n}$ (See Lemmas \ref{lemma2.1}-\ref{lemma2.2}). The fractional Laplacian $(-\Delta)^{s}$ is defined on the Schwartz class (space of rapidly decaying $C^{\infty}$ functions in $\R^{n}$) through Fourier transform,
$$\widehat{(-\Delta)^{s}u}(\xi)= |\xi|^{2s} \hat{u}(\xi), \forall \xi \in \R^{n},      $$
where $\hat{u}(\xi)=\frac{1}{(2\pi)^{n/2}} \int_{\R^{n}} e^{-i\xi x}u(x)dx$ is the Fourier transform of $u$.

Throughout this paper, for $0\!\leq\!m\!\leq\!n$ and $y\!=\!(y',y'') \in \mathbb{R}^{m} \times \mathbb{R}^{n-m}$, we denote the norm of
$L^{p}(\R^{n},|y'|^{-\lambda})$ by $${||u||}_{L^{p}(\R^{n},|y'|^{-\lambda})}:=\Big ( \int_{\R^{n}}  \frac{|u(y)|^p}{  |y'|^{\lambda} }dy  \Big )^{\frac{1}{p}}$$
for any $0 \leq \lambda<n$ and $p\in[1,+\infty)$. 
The space ${\dot{H}}^s(\R^{n})$ is defined as the completion of $C_{0}^{\infty}(\R^{n})$ under the norm
$$   ||u||_{\dot{H}^{s}(\R^{n})}= \Big(\int_{\R^{n}}|\xi|^{2s}|\hat{u}(\xi)|^2d\xi\Big )^{\frac{1}{2}}= \Big(\int_{\R^{n}} |(-\Delta)^{s/2}u|^2dx\Big )^{\frac{1}{2}}.$$
The dual space of ${\dot{H}}^s(\R^{n})$ is denoted by ${{\dot{H}}^s(\R^{n})^{'}}$. See \cite{ENEV} and references therein for the basics on the fractional Laplacian.

The fractional and non-local operators arise in many different applications,
see \cite{ENEV,Caff} and the references therein. For \eqref{eq1.1} with $s=1$, $m=n$ and $\alpha=0=\beta$, it is related to Bose-Einstein condensate, see \cite{CzwZ}. In \cite{mBgT,GmKs}, a class of partial weight problems were studied as a model describing the dynamics of galaxies.

For any $(u,v) \in X=\dot{H}^{s}(\R^{n})\times\dot{H}^{s}(\R^{n})$, the energy functional of (\ref{eq1.1}) is defined as:
\begin{align*}
  I(u,v)&=\frac{1}{2}\int_{\R^{n}} \big[{|(-\Delta)^{\frac{s}{2}}u|}^2
-{\gamma_1} {\frac{u^2}{|x'|^{2s}}}+{|(-\Delta)^{\frac{s}{2}}v|}^2
-{\gamma_2} {\frac{v^2}{|x'|^{2s}}}\big]dx  \\
&-\frac{2}{2^{*}_{s}(\alpha)}\int_{\R^n}{\frac{{|uv|}^{ \frac{{2^{*}_{s}}(\alpha)}{2} }}{|x'|^{\alpha}}}dx
-\frac{1}{ 2^{*}_{s}(\beta) }\int_{\R^{n}}{\frac{{|u|}^{ {2^{*}_{s}}(\beta)}+{|v|}^{ {2^{*}_{s}}(\beta)}}{|x'|^{\beta}}}dx.
\end{align*}
A nontrivial critical point of $I$ is a nontrivial weak solution to (\ref{eq1.1}). We say a pair of functions $(u,v) \in X$ is a \textbf{nontrivial solution} of \eqref{eq1.1} if
$$u\not\equiv0,~~~~v\not\equiv0,~~~~{\langle I'(u,v),(\phi,\psi) \rangle}_X=0,~~~~\forall(\phi,\psi) \in X;$$
if $(u,v)= (u, 0)$ or $(u, v) = (0, v)$ or $u\equiv v$, we say that $(u,v) \in X$ is a \textbf{semi-nontrivial solution} of \eqref{eq1.1}, where ${\langle ,\rangle}_X$ denote the inner product in $X$(See formula \eqref{DeF2.4}).

The problem of multiple critical exponents has been extensively studied by scholars, see \cite{RFPP,NGSS,JYFW,NGCY,DAEJ,DKGL,CWJ,HYKD,JWJP,CzwZ,gBtY,GbTs}. Dating back to \cite{RFPP}, R. Filippucci et al. studied the doubly critical equation of Emden-Fowler type:
\begin{equation} \label{eq1.2}
 -{\Delta}_pu-{\kappa} {\frac{u^{p-1}}{|x|^{p}}}=u^{p^{*}-1}+\frac{u^{ {p^{*}(\alpha)-1}}}{|x|^{\alpha}} ~~~~\mbox{in}~~{\R}^n, u \geq 0, \ \ u \in {D}^{1,p}(\R^{n})
 \end{equation}
where $n\geq 2$, $p \in (1,n)$, $\alpha \in (0,p)$, $0\leq \kappa < \bar{\kappa}=(\frac{n-p}{p})^p$, $p^{*}=\frac{np}{n-p}$, $p^{*}(\alpha)=\frac{p(n-\alpha)}{n-p}$ and ${\Delta}_p u:=div({|\nabla u|}^{p-2}\nabla u)$ is the p-Laplacian of $u$. The space ${D}^{1,p}(\R^{n})$ is defined as the completion of $C^{\infty}_{0}({\R}^n)$ under the norm $$||u||_{{D}^{1,p}(\R^{n})}=(\int_{\R^{n}} |\nabla u|^p dx)^{\frac{1}{p}}.$$
Based on truncation skills, they obtain a nontrivial weak solution to $(\ref{eq1.2})$ by using mountain pass lemma and concentration analysis of the corresponding $(PS)$ sequence.
The works of \cite{NGSS,JYFW,gBtY} were devoted to the fractional Laplacian equations involving different critical nonlinearities, one can refer to \cite{LwBz,GbTs} for fractional Laplacian systems involving different critical nonlinearities. For the cases of the standard Laplacian, biharmonic operator and p-biharmonic operator, the interested reader can refer to \cite{CzwZ,FCZQ,JLCS,NGAM, NGFR,NGCY,YWYS,DAEJ,AEKS}.

Let us focus on partial weight problems. For $n\geq 3$, $\alpha \in (0,n)$, $2\leq m\leq n$, $x\!=\!(x',x'') \in \mathbb{R}^{m} \times \mathbb{R}^{n-m}$ and  $2^{*}(\alpha)=\frac{2(n-\alpha)}{n-2}$, the minimization problem
\begin{equation} \label{galaxies1.3}
\bar{\Lambda}(n,m,\alpha)=  \mathop {\inf }\limits_{u \in {D}^{1,2}(\R^{n}) \setminus \{0\}}   \frac{\int_{\R^{n}} |\nabla u|^2 dx}{\Big( \int_{\R^{n}} \frac{{|u|}^{ { 2^{*} }(\alpha)}}{|x'|^{\alpha}}dx  \Big)^{\frac{2}{  2^{*}(\alpha)  }}},
\end{equation}
is attained by a cylindrically symmetric decreasing function, see \cite{mBgT,GmKs} for details. A minimizer of $\bar{\Lambda}(n,m,\alpha)$ weakly solves the problem
\begin{equation} \label{Galaxies1.4}
-\Delta u={\frac{{|u|}^{ {2^{*}}(\alpha)-2}u}{|x'|^{\alpha}}}
\end{equation}
up to a multiplying constant. Equation \eqref{Galaxies1.4} is related to a model describing the dynamics of elliptic galaxies, see \cite{BgER,lcTi}. Existence and symmetry of solutions were studied in \cite{mBgT,GmKs}. In the case $\alpha=1$, all positive finite energy solutions of \eqref{Galaxies1.4} were classified in \cite{IFaK}. Such solutions are given by $V(x',x'')=\frac{{\big( (n-2)(m-1)\big)}^{\frac{n-2}{2}}}{{\big( (1+{|x'|})^2+{|x''|}^2\big)}^{\frac{n-2}{2}}}$ or its scaling and translations in the $x''$-variable. In \cite{XcTy}, X. L. Chen et al. extended \eqref{galaxies1.3} to nonlocal case and considered
\begin{equation} \label{sgalaxies1.5}
\bar{\Lambda}(n,m,s,\alpha)=  \mathop {\inf }\limits_{u \in \dot{H}^{s}(\R^{n}) \setminus \{0\}  }  \frac{\int_{\R^{n}} |(-\Delta)^{s/2}u|^2dx}{\Big( \int_{\R^{n}} \frac{{|u|}^{ { 2_s^{*} }(\alpha)}}{|x'|^{\alpha}}dx  \Big)^{\frac{2}{  2_s^{*}(\alpha)  }}},
\end{equation}
where $s \!\in\!(0,1)$, $0\!<\!\alpha\!<\!2s\!<\!n$, $0\!< \!m\!<\!n$, $x\!=\!(x',x'') \in \mathbb{R}^{m} \times \mathbb{R}^{n-m}$ and ${2^{*}_{s}}(\alpha)\!=\!\frac{2(n-\alpha)}{n-2s}$. They showed that \eqref{sgalaxies1.5} is achieved by a positive, cylindrically symmetric and strictly decreasing function $u(x)$, which weakly solves the problem
\begin{equation} \label{sGalaxies1.6}
(-\Delta)^s u={\frac{{|u|}^{ {2_s^{*}}(\alpha)-2}u}{|x'|^{\alpha}}}
\end{equation}
up to a multiplying constant. Decaying laws for the minimizer $u$ were also established.

Motivated by the above papers, we study the existence of nontrivial weak solutions to system $(\ref{eq1.1})$ with partial weight. To our best knowledge, $(\ref{eq1.1})$ has not been studied before.

Our first main results are as follows (Our second main results will be stated and proved in Section 6):

\begin{theorem} \label{th1.1}
Let $\gamma_1\not=\gamma_2$, then system (\ref{eq1.1}) possesses at least a nontrivial weak solution provided either \textbf{(I)} $s \in(0,1)$, $0<\alpha,\beta<2s<n$, $2s<m<n$ and $\gamma_1,\gamma_2<\gamma_{H}$ or \textbf{(II)} $s \in(0,1)$, $\beta=0<\alpha<2s<n$, $2s<m<n$ and $0\leq \gamma_1,\gamma_2<\gamma_{H}$.
\end{theorem}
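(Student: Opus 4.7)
The plan is to apply the mountain pass lemma to $I$ on $X$ and then combine a sharp level estimate with the improved Sobolev inequality \eqref{eq0.3} to force the weak Palais-Smale limit to have both components nontrivial.

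First I would verify the mountain pass geometry. Set
\begin{equation*}
Q(u,v):=\int_{\R^n}\!\Bigl[|(-\Delta)^{s/2}u|^2-\gamma_1\tfrac{u^2}{|x'|^{2s}}+|(-\Delta)^{s/2}v|^2-\gamma_2\tfrac{v^2}{|x'|^{2s}}\Bigr]dx.
\end{equation*}
Because $\gamma_1,\gamma_2<\gamma_H$ the fractional Hardy inequality makes $Q$ equivalent to $\|(u,v)\|_X^2$; in case (II) the hypothesis $\gamma_i\ge0$ is needed so that $Q$ remains coercive against the pure Sobolev term at $\beta=0$. Combining the Hardy-Sobolev inequalities associated with $\bar\Lambda(n,m,s,\alpha)$ and $\bar\Lambda(n,m,s,\beta)$ with Young's inequality on the coupling term yields $I(u,v)\ge\tfrac12 Q(u,v)-C\bigl(Q(u,v)^{2^{*}_s(\beta)/2}+Q(u,v)^{2^{*}_s(\alpha)/2}\bigr)$, so $(0,0)$ is a strict local minimum. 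For any fixed $(u_0,v_0)\in X$ with $u_0,v_0\not\equiv 0$, $I(tu_0,tv_0)\to-\infty$ as $t\to\infty$. The mountain pass lemma then yields a Palais-Smale sequence $(u_n,v_n)$ at some level $c>0$; taking $p:=\min\{2^{*}_s(\alpha),2^{*}_s(\beta)\}>2$ and examining $I(u_n,v_n)-\tfrac{1}{p}\langle I'(u_n,v_n),(u_n,v_n)\rangle$ gives boundedness in $X$. Extracting $(u_n,v_n)\rightharpoonup(u,v)$ weakly and a.e., lower semicontinuity and a fractional Brezis-Lieb argument show $(u,v)$ is a weak solution of \eqref{eq1.1}.

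The crux is to exclude $(u,v)\in\{(0,0),(u,0),(0,v)\}$. Let $\mu(\gamma)$ denote the ground-state level of the scalar decoupled equation $(-\Delta)^sw-\gamma w/|x'|^{2s}=|w|^{2^{*}_s(\beta)-2}w/|x'|^\beta$, attained in the spirit of \eqref{sGalaxies1.6}. Along a two-parameter test family $(tU_{\gamma_1,\ep},\sigma U_{\gamma_2,\ep})$ built from rescaled cylindrically symmetric extremals of the relevant scalar Hardy-Sobolev problems, a direct computation produces
\begin{equation*}
c<\min\{\mu(\gamma_1),\mu(\gamma_2)\},
\end{equation*}
where strict inequality crucially exploits $\gamma_1\ne\gamma_2$, so that the two profiles are genuinely distinct and the cross integral $\int|U_{\gamma_1,\ep}U_{\gamma_2,\ep}|^{2^{*}_s(\alpha)/2}/|x'|^\alpha\,dx$ is strictly positive. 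Applying \eqref{eq0.3} along $(u_n,v_n)$ dominates the coupling integral by the bounded energies times a Morrey factor $\|u_nv_n\|_{L^{1,n-2s+r}(\R^n,|y'|^{-r})}^{1-\theta}$ with $1-\theta>0$. A Lions-type concentration dichotomy then shows that if this Morrey factor vanishes along the sequence, the coupling contribution drops out and $c$ is forced down to a decoupled ground-state level, giving $c\ge\min\{\mu(\gamma_1),\mu(\gamma_2)\}$ and contradicting the level estimate. Hence the Morrey factor stays bounded below, weak lower semicontinuity of that norm forces $uv\not\equiv 0$, and both components of $(u,v)$ are nontrivial.

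The main obstacle will be the strict level estimate $c<\min\{\mu(\gamma_1),\mu(\gamma_2)\}$: one must tune the two scaling parameters $t,\sigma$ against $\ep$ so that the cross term produces a quantitative gain despite the two decoupled critical nonlinearities dominating at leading order, and without $\gamma_1\ne\gamma_2$ the symmetric path $u=v$ would collapse this into a semi-nontrivial solution. The role of \eqref{eq0.3} is precisely to convert the resulting level gap into a Morrey lower bound that rules out the semi-trivial and trivial scenarios in a ``direct'' way, as announced in the introduction, thereby bypassing a full profile decomposition of the PS sequence.
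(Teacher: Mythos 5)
Your overall skeleton (mountain pass, level estimate, improved Sobolev inequality to prevent degeneration of the weak limit) matches the paper's strategy, but two of your key steps have genuine gaps.

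First, the claim that ``weak lower semicontinuity of that norm forces $uv\not\equiv 0$'' does not work. The Morrey norm $\|u_kv_k\|_{L^{1,n-2s+r}(\R^{n},|y'|^{-r})}$ is invariant under the noncompact group of dilations and translations in the $x''$-variable, so a $(PS)$ sequence can keep this norm bounded below while converging weakly to $(0,0)$; a lower bound on the norm of $u_kv_k$ says nothing about the weak limit. The paper's essential step, which your proposal omits, is to pick near-optimal balls $B_{\lambda_k}(x_k)$ for the Morrey norm, renormalize the sequence by $\tilde u_k(x)=\lambda_k^{(n-2s)/2}u_k(\lambda_k x',\lambda_k(x''-x''_k))$ (and likewise for $v_k$), prove that the rescaled centers $\tilde x'_k=x'_k/\lambda_k$ stay \emph{bounded} (this uses the weight $|x'|^{-r}$ itself), verify that $(\tilde u_k,\tilde v_k)$ is again a $(PS)_c$ sequence because $I$ is invariant under exactly this dilation/partial translation, and only then use the local compactness of $u\mapsto u/|x'|^{r/2}$ in $L^2_{loc}$ (Lemma \ref{lemma2.3}, valid since $r/2<s$) to conclude that the \emph{new} weak limit is nontrivial. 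Without this renormalization your argument only produces a (possibly zero) weak solution.

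Second, your level estimate and the role of $\gamma_1\neq\gamma_2$ are off. The paper's threshold is $c^*=\min\{\frac{2s-\beta}{2(n-\beta)}S(n,s,\beta)^{\frac{n-\beta}{2s-\beta}},\frac{2s-\alpha}{n-\alpha}[\Lambda(n,s,\alpha)/2]^{\frac{n-\alpha}{2s-\alpha}}\}$, built from two \emph{system-level} minimization problems whose attainment is itself proved via the improved Sobolev inequality (Proposition \ref{pro1.7}); the bound $c<c^*$ then forces $d_1>0$ and $d_2>0$ by elementary algebra on the inequalities $d_1^{2/2^*_s(\alpha)}A_1\le d_2$ and $d_2^{2/2^*_s(\beta)}A_2\le 2d_1$, with no concentration dichotomy. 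Your threshold $\min\{\mu(\gamma_1),\mu(\gamma_2)\}$ against scalar decoupled ground states, and the two-parameter test-function computation claimed to achieve it, are not carried out and are the hardest part of your route; moreover the assertion that $\gamma_1\neq\gamma_2$ is what makes the cross integral strictly positive is wrong --- that integral is positive for any two nonzero profiles. In the paper $\gamma_1\neq\gamma_2$ enters only at the very end, to exclude the semi-nontrivial case $\tilde u\equiv\tilde v$ (subtracting the two equations would give $(\gamma_1-\gamma_2)\tilde u/|x'|^{2s}=0$).
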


\begin{remark}
Theorem \ref{th1.1} indicates that we can relax the lower bound of $\gamma_1,\gamma_2$ in system (\ref{eq1.1}) provided $\alpha,\beta>0$ since we need the extra condition $\gamma_1,\gamma_2\geq0$ if $\beta=0$. 
\end{remark}

There are four main difficulties in the proof of Theorem \ref{th1.1}. \textbf{Firstly}, truncation skills used in \cite{RFPP,NGSS} do not work if we choose $X\!=\!\dot{H}^{s}(\R^{n})\times \dot{H}^{s}(\R^{n})$ as the work space since $(-\Delta)^{s}$ is a nonlocal operator. Although the $s$-harmonic extension (obtained by L. Caffarelli et al. in \cite{LCLS}) can overcome the difficulty of the non-locality of $(-\Delta)^{s}$,  
this method is more complicated and less straightforward since the appearance of the partial weight terms $\frac{1}{|x'|^{(\cdot)}}$ in (\ref{eq1.1}). \textbf{Secondly}, the compactness of the corresponding $(PS)$ sequence can not hold for
any energy level $c>0$ since system (\ref{eq1.1}) is invariant under the transformation
$$\big(u(x), v(x)\big)\mapsto \big({\lambda}^{ \frac{n-2s}{2} }u({\lambda}x',{\lambda}(x''-y'') ),{\lambda}^{ \frac{n-2s}{2} }v({\lambda}x',{\lambda}(x''-y'') )\big), ~~~~~~~~\forall x\!=\!(x',x'') \in \mathbb{R}^{m} \times \mathbb{R}^{n-m},$$
where ${\lambda}>0$ and $y''\in \mathbb{R}^{n-m}$. In fact, assume by contradiction that the compactness of the corresponding $(PS)$ sequence holds for some $c > 0$, and let $\{(u_k, v_k)\}$ be a $(PS)_c$ sequence, that is, $I(u_k, v_k) \to c$ and $I'(u_k, v_k) \to 0$ as $k \to +\infty$. Then up to a subsequence, we may assume that $(u_k, v_k) \to (u, v)$ strongly in $X$. Define $\big(\tilde{u}_k(x), \tilde{v}_k(x)\big)=\big(k^{ \frac{n-2s}{2} }u_k(k x),k^{ \frac{n-2s}{2} }v_k(k x)\big)$ and $\big(\bar{u}_k(x), \bar{v}_k(x)\big)=\big(\tilde{u}_k(x',x''-y''_k), \tilde{v}_k(x',x''-y''_k)\big)$ for $y''_k\in \mathbb{R}^{n-m}$ satisfying $|y''_k|\to+\infty$ as $k\to+\infty$, then it is easy to check that $\{(\bar{u}_k, \bar{v}_k)\}$ is also a $(PS)c$
sequence and $(\bar{u}_k, \bar{v}_k) \rightharpoonup (0, 0)$ weakly in $X$. But then, we have $(\bar{u}_k, \bar{v}_k) \rightarrow(0, 0)$ strongly in $X$, which contradicts with $c>0$. \textbf{Thirdly}, there is an asymptotic competition between the energy carried by the two critical nonlinearities, so we have trouble in ruling out the "vanishing" of the corresponding $(PS)$ sequence. \textbf{Fourthly}, the appearance of the coupled terms ${\frac{{|u|}^{\frac{{2^{*}_{s}}(\alpha)}{2} -2}u {|v|}^{\frac{{2^{*}_{s}}(\alpha)}{2}}}{|x'|^{\alpha}}}$ and ${\frac{{|v|}^{\frac{{2^{*}_{s}}(\alpha)}{2} -2}v {|u|}^{\frac{{2^{*}_{s}}(\alpha)}{2}}}{|x'|^{\alpha}}} $ in (\ref{eq1.1}) brings in more difficulty for searching nontrivial solutions.

The method adopted in \cite{JYFW,LwBz,gBtY,GbTs} is not applicable to (\ref{eq1.1}) since the existence of the partial weight terms $\frac{1}{|x'|^{(\cdot)}}$ in (\ref{eq1.1}). For these reasons, we develop new tools which is based on a weighted Morrey space, see Section 3. To be precise, we discover the embeddings
\begin{equation}  \label{eq1.06}
{\dot{H}}^s(\R^{n}) \hookrightarrow  {L}^{2^*_{s}(\alpha)}(\R^{n},|y'|^{-\alpha}) \hookrightarrow L^{2,n-2s+r}(\R^{n},|y'|^{-r}),
\end{equation}
where $s \in (0,1)$, $0<\alpha<2s<n$, $0<m<n$, $y\!=\!(y',y'') \in \mathbb{R}^{m} \times \mathbb{R}^{n-m}$ and $r=\frac{2\alpha}{ 2^*_{s}(\alpha) }$; Based on \eqref{eq1.06}, we establish the following improved Sobolev inequalities with partial weight:


\begin{proposition}\label{prop1.4}
Let $s \!\in \!(0,1)$, $0\!<\!\alpha\!<\!2s\!<\!n$  and $2s\!<\!m\!<\!n$. There exists $C\!=\!C(n,m,s,\alpha)\!>\!0$ such that for any $u,v \!\in\! {\dot{H}}^s(\R^{n})$ and for any $\theta \!\in\! (\bar{\theta},1)$, it holds that
\begin{equation}  \label{eqa1.6}
 \Big( \int_{ \R^{n} }  \frac{ |(uv)(y)|^{\frac{2^*_{s}(\alpha)}{2} } }  {  |y'|^{\alpha} } dy  \Big)^{ \frac{1}{  2^*_{s} (\alpha)  }}  \leq C ||u||_{{\dot{H}}^s(\R^{n})}^{\frac{\theta}{2}}
 ||v||_{{\dot{H}}^s(\R^{n})}^{\frac{\theta}{2}} ||(uv)||^{\frac{1-\theta}{2}}_{  L^{1,n-2s+r}(\R^{n},|y'|^{-r}) },
\end{equation}
where $y\!=\!(y',y'') \in \mathbb{R}^{m} \times \mathbb{R}^{n-m}$, $\bar{\theta}=\max \{ \frac{2}{2^*_{s}(\alpha)}, 1-\frac{\alpha}{s}\cdot\frac{1}{2^*_{s}(\alpha)}, \frac{2^*_{s}(\alpha)-\frac{\alpha}{s}}{2^*_{s}(\alpha)-\frac{2\alpha}{m}} \}$ and $r=\frac{2\alpha}{ 2^*_{s}(\alpha) }$.
\end{proposition}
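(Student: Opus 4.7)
The plan is to derive (\ref{eqa1.6}) as a Gagliardo--Nirenberg-type interpolation inequality for the product $w := uv$, squeezed between a fractional Hardy--Sobolev endpoint (formally $\theta = 1$) and a weighted Morrey endpoint (formally $\theta = 0$). As preparation I would record the chain (\ref{eq1.06}): the first link $\dot{H}^s \hookrightarrow L^{2^*_s(\alpha)}(\R^n,|y'|^{-\alpha})$ is the fractional Hardy--Sobolev inequality (Lemma 2.2); the second link $L^{2^*_s(\alpha)}(\R^n,|y'|^{-\alpha}) \hookrightarrow L^{2,n-2s+r}(\R^n,|y'|^{-r})$ is a ball-by-ball H\"older inequality with conjugate exponents $2^*_s(\alpha)/2$ and its dual, where the choice $r = 2\alpha/2^*_s(\alpha)$ is precisely what cancels the residual weight and yields a scale-invariant estimate uniform in the ball radius. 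Applying Cauchy--Schwarz to $uv$ then produces, at both endpoints,
\[
\|uv\|_{L^{2^*_s(\alpha)/2}(\R^n,|y'|^{-\alpha})} + \|uv\|_{L^{1,n-2s+r}(\R^n,|y'|^{-r})} \leq C\|u\|_{\dot{H}^s}\|v\|_{\dot{H}^s}.
\]

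For the interpolation step I would perform a dyadic decomposition of the level sets of $|w|$: for $k \in \mathbb{Z}$, set $F_k = \{y \in \R^n : 2^k < |w(y)| \leq 2^{k+1}\}$ and write $\int_{\R^n}|w|^{2^*_s(\alpha)/2}/|y'|^\alpha\, dy = \sum_k \int_{F_k}|w|^{2^*_s(\alpha)/2}/|y'|^\alpha\, dy$. On each $F_k$ one has two complementary estimates. A ``large-$k$'' bound uses the fractional Hardy--Sobolev inequality applied to $u$ and $v$ separately (contributing $\|u\|_{\dot{H}^s}\|v\|_{\dot{H}^s}$) together with the trivial majorization $|w|^{2^*_s(\alpha)/2 - q} \leq 2^{(k+1)(2^*_s(\alpha)/2 - q)}$ on $F_k$ for a suitable $q$; a ``small-$k$'' bound uses $|w| > 2^k$ on $F_k$ combined with the Morrey norm of $w$ evaluated on a dyadic spatial cover, which contributes a factor of $2^{-k}$. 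Taking a geometric mean of the two bounds with weights $\theta$ and $1-\theta$, then summing in $k$ after an optimal splitting at a scale $k_0$ depending on $\|u\|_{\dot{H}^s}\|v\|_{\dot{H}^s}$ and the Morrey norm of $uv$, delivers (\ref{eqa1.6}).

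The main technical obstacle is showing that the two-sided geometric series in $k$ is absolutely summable, and this is precisely why $\theta$ must exceed the maximum of three distinct thresholds. The first, $2/2^*_s(\alpha)$, is the Cauchy--Schwarz ratio between the $L^{2^*_s(\alpha)/2}$ and $L^1$ endpoints. The second, $1 - \alpha/(s \cdot 2^*_s(\alpha))$, arises from balancing the two weights $|y'|^{-\alpha}$ and $|y'|^{-r}$ that appear on opposite endpoints. The third, $(2^*_s(\alpha) - \alpha/s)/(2^*_s(\alpha) - 2\alpha/m)$, is imposed by the constraint $2s < m < n$ on the codimension of the partial-weight subspace and guarantees local integrability of $|y'|^{-r}$ at the rate needed to control the Morrey-side contribution over dyadic balls intersecting the singular set $\{y' = 0\}$. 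For any $\theta > \bar{\theta}$ both tails of the series are summable and the optimization over $k_0$ closes the argument, producing the constant $C = C(n,m,s,\alpha)$.
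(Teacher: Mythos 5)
Your preparatory paragraph (the chain \eqref{eq1.06} via Hardy--Sobolev plus a ball-by-ball H\"older argument, and Cauchy--Schwarz on $uv$) is fine, but the core interpolation step has a genuine gap. The paper does not interpolate by decomposing level sets of $w=uv$; it writes $u=\ell_s g$, $v=\ell_s f$ as Riesz potentials of $L^2$ functions and invokes the Sawyer--Wheeden two-weight inequality (Lemma \ref{lemma3.4}) with $W\equiv 1$ and the function-dependent weight $V(y)=|uv|^{(2^*_s(\alpha)-\tilde q)/2}|y'|^{-\alpha}$, verifying the local testing condition \eqref{eq3.3} ball by ball via H\"older. That is exactly where the Morrey norm enters legitimately: condition \eqref{eq3.3} is itself a supremum over balls of scale-normalized integrals, so it matches the structure $\sup_{R,x}R^{\gamma+\lambda-n}\int_{B_R(x)}|w|\,|y'|^{-r}dy$ of the Morrey norm, and the global inequality \eqref{eq3.4} is then supplied by the Sawyer--Wheeden theorem.

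Your scheme, by contrast, needs the Morrey norm to control a \emph{global} quantity $\int_{F_k}|w|\,|y'|^{-r}dy$ over the level set $F_k=\{2^k<|w|\le 2^{k+1}\}$. Since $n-2s+r<n$, a single ball of radius $R$ only gives $\int_{B_R(x)}|w|\,|y'|^{-r}dy\le R^{2s-r}\|w\|_{L^{1,n-2s+r}(|y'|^{-r})}$, which grows with $R$; for small $k$ the set $F_k$ may require arbitrarily many unit balls to cover, and counting them forces you back to a Chebyshev estimate through the very norm $\int|w|^{2^*_s(\alpha)/2}|y'|^{-\alpha}dy$ you are trying to bound --- the argument is circular. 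The ``large-$k$'' branch has the symmetric problem: with $q=2^*_s(\alpha)/2$ the prefactor $2^{(k+1)(2^*_s(\alpha)/2-q)}$ is $1$ and nothing is summable, while any other $q$ requires an integrability of $|w|^{q}|y'|^{-\alpha}$ that Hardy--Sobolev does not provide. So the two-sided geometric series you rely on is not actually controlled by the stated ingredients, and no choice of $k_0$ closes it. Your heuristics for the three thresholds in $\bar\theta$ are also not quite what happens in the proof: they come respectively from $\tilde q>\tilde p=2$, from forcing the exponent $\tfrac{[2^*_s(\alpha)-\tilde q]\sigma}{2}$ on $|uv|$ in the H\"older step into $(0,1)$, and from local integrability of $|y'|^{-\tau}$ on the $m$-dimensional slice (i.e.\ $\tau<m$), all of which are constraints on the admissible range of $\tilde q$ in Lemma \ref{lemma3.4}, not summability conditions for a dyadic series. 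To repair your write-up, replace the level-set decomposition by the potential-theoretic route: represent $u,v$ as Riesz potentials and verify \eqref{eq3.3} as in Section 3.
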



\begin{corollary}\label{coro1.5}
Let $n\!\geq\!2$, $2\!\leq\! p\!<\!n$ and $0\!<\!\alpha\!<\!p\!<\!m\!<\!n$. There exists $C\!=\!C(n,m,p,\alpha)\!>\!0$ such that for any $u,v \!\in\! {D}^{1,p}(\R^{n})$ and for any $\theta \!\in\! (\bar{\theta},1)$, it holds that
\begin{equation}  \label{eq1.7}
 \Big( \int_{ \R^{n} }  \frac{  {|(uv)|}^  {\frac{ p^*(\alpha)}{2} }   }   {  |y'|^{\alpha}  } dy  \Big)^{ \frac{1}{  p^* (\alpha)  }}  \leq C ||u||_{{D}^{1,p}(\R^{n})}^{\frac{\theta}{2}}
 ||v||_{{D}^{1,p}(\R^{n})}^{\frac{\theta}{2}} ||(uv)||^{\frac{1-\theta}{2}}_{  L^{{p}/{2},n-p+r}(\R^{n},|y'|^{-r}) },
\end{equation}
where $y\!=\!(y',y'') \in \mathbb{R}^{m} \times \mathbb{R}^{n-m}$, $\bar{\theta}=\max \{ \frac{p}{p^*(\alpha)}, 1-\frac{\alpha}{p^*(\alpha)},\frac{p^*(\alpha)
-\alpha}{p^*(\alpha)-\frac{p\alpha}{m}} \}$ and $p^{*}(\alpha)=\frac{p(n-\alpha)}{n-p}$.
\end{corollary}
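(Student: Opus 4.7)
My plan is to mirror the proof of Proposition \ref{prop1.4} line by line, replacing the fractional Sobolev space $\dot{H}^{s}(\R^{n})$ by $D^{1,p}(\R^{n})$, the fractional critical Hardy-Sobolev exponent $2^{*}_{s}(\alpha)$ by $p^{*}(\alpha)$, and each appearance of the fractional embedding by the corresponding $W^{1,p}$ embedding. The first input is the $p$-Laplacian analogue of the embedding chain \eqref{eq1.06}, namely
\[
D^{1,p}(\R^{n}) \hookrightarrow L^{p^{*}(\alpha)}(\R^{n},|y'|^{-\alpha}) \hookrightarrow L^{p/2,\,n-p+r}(\R^{n},|y'|^{-r}),
\]
with $r = 2\alpha/p^{*}(\alpha)$. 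The first embedding is the partial-weight Hardy-Sobolev inequality for $D^{1,p}$, which is the $p$-extension of \eqref{galaxies1.3} and requires $\alpha<p$; the second is a direct estimate of Morrey type that uses the condition $p<m$ to guarantee local integrability of $|y'|^{-r}$ along the orthogonal slice $\R^{n-m}$.

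Next I would partition $\R^{n}$ into dyadic shells $A_{k}=\{y\in\R^{n}:2^{k-1}\leq|y'|<2^{k}\}$ for $k\in\mathbb{Z}$, bound $|y'|^{-\alpha}$ by $2^{\alpha-k\alpha}$ on each $A_{k}$, and on each shell interpolate via H\"older's inequality between the weighted $L^{p^{*}(\alpha)}(|y'|^{-\alpha})$ bound (giving the Sobolev factor, applied to $u$ and $v$ separately and combined by Cauchy-Schwarz) and the Morrey norm control on a ball of radius $\sim 2^{k}$ containing $A_{k}$. The outcome is, for each $k$, an estimate of the shape
\[
\int_{A_{k}} \frac{|(uv)|^{p^{*}(\alpha)/2}}{|y'|^{\alpha}}\,dy \;\leq\; C\,(\|u\|_{D^{1,p}}\|v\|_{D^{1,p}})^{\theta p^{*}(\alpha)/2}\,\|uv\|_{L^{p/2,n-p+r}(|y'|^{-r})}^{(1-\theta)p^{*}(\alpha)/2}\cdot 2^{k\sigma(\theta)},
\]
with $\sigma(\theta)$ an explicit linear function of $\theta$, $\alpha$, $p$, $m$ and $n$.

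Summing the geometric series over $k\in\mathbb{Z}$ forces the two-sided convergence conditions that produce $\bar{\theta}$. The three entries of the maximum correspond to: the natural scaling balance between the Sobolev and Morrey parts ($p/p^{*}(\alpha)$); summability as $k\to +\infty$, i.e.\ large $|y'|$ ($1-\alpha/p^{*}(\alpha)$); and summability as $k\to -\infty$, i.e.\ small $|y'|$ ($(p^{*}(\alpha)-\alpha)/(p^{*}(\alpha)-p\alpha/m)$), where the hypothesis $p<m$ is precisely what keeps the last threshold strictly below $1$. For every $\theta\in(\bar{\theta},1)$ all three tails are geometrically summable, and extracting the $p^{*}(\alpha)$-th root yields \eqref{eq1.7}. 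The main obstacle is keeping track of the H\"older exponents on each shell so that a single $\theta$ accommodates all three convergence conditions simultaneously; once this exponent arithmetic is correctly set up, the argument is identical in structure to that of Proposition \ref{prop1.4}, since the partial weight $|y'|^{-\alpha}$ interacts with the $D^{1,p}$ embedding and with the Morrey norm in exactly the same anisotropic way as in the fractional case.
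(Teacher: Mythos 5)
Your proposal does not mirror the paper's proof of Proposition~\ref{prop1.4} despite claiming to do so, and the alternative route you sketch has a genuine gap. The proof of Proposition~\ref{prop1.4} is not a dyadic decomposition; it rewrites $u={\ell}_sg$ with $g=(-\Delta)^{s/2}u$, applies the Sawyer--Wheeden two-weight inequality for the Riesz potential (Lemma~\ref{lemma3.4}), and verifies the cube condition~\eqref{eq3.3} directly on balls $B_R(x)$ via H\"older's inequality. The paper's proof of Corollary~\ref{coro1.5} is the same mechanism after the essential preliminary observation $|u(x)| \leq C\,{\ell}_1(|\nabla u|)(x)$ (the representation of $u$ via the Newtonian/logarithmic kernel), which you do not mention at all; it then chooses $\tilde{s}=1$, $\tilde{p}=p$, $\sigma=\frac{p}{\alpha}$ and runs the identical verification of~\eqref{eq3.3}.

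The dyadic-shell argument you propose cannot be made to work as stated, for a concrete reason: the sets $A_k=\{y\in\R^n : 2^{k-1}\leq|y'|<2^k\}$ are unbounded cylinders (they are all of $\R^{n-m}$ in the $y''$ direction), so there is no ball of radius $\sim 2^k$ containing $A_k$. The Morrey norm is a supremum over bounded balls $B_R(x)$, and a single factor of the Morrey norm does not control $\int_{A_k}$; trying to cover $A_k$ by balls of radius $2^k$ produces an infinite (nonconvergent) sum over the $y''$-directions. Likewise, the per-shell Sobolev contribution you invoke is scale-invariant and $k$-independent, so there is no intrinsic geometric decay to sum. The convergence thresholds that produce $\bar{\theta}$ in the paper come not from summing a geometric series but from the algebraic constraints needed in the H\"older splitting on $B_R(x)$ inside the Sawyer--Wheeden condition, namely $\tilde{p}\leq\tilde{q}$, $0<(\eta_1-\tfrac{\tilde{q}}{2})\sigma/t<1$ (in the general form), and local integrability of $|y'|^{-(\cdot)}$ on $\R^m$. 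Finally, your value $r=2\alpha/p^*(\alpha)$ is inconsistent with the Morrey space $L^{p/2,n-p+r}(\R^n,|y'|^{-r})$ appearing in the statement: the correct scaling-consistent value, as used in the paper, is $r=p\alpha/p^*(\alpha)$ (the factor is $p$, not the number $2$ from the fractional case where $\tilde{p}=2$).
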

\begin{remark}
Proposition \ref{prop1.4} and Corollary \ref{coro1.5} are more general than Theorems 1-2 by G. Palatucci et al. in \cite{GPAP}; The detailed proof will be given in Section 3.
\end{remark}

Now, we give the outline of the proof for Theorem \ref{eq1.1}. We use the Mountain pass lemma to find critical points of $I(u,v)$ on $X=\dot{H}^{s}(\R^{n})\times\dot{H}^{s}(\R^{n})$, which correspond to weak solutions for system (\ref{eq1.1}). Since problem (\ref{eq1.1}) includes double critical exponents, we require the Mountain pass level $c<c^*$ for some suitable threshold value $c^*$. This is crucial in  ruling out the "vanishing" of the corresponding (PS) sequence. To this end, we introduce the minimization problems
\begin{equation}  \label{eq1.9}
    {S}(n,s,\alpha)=\mathop {\inf }\limits_{(u,v) \in X \setminus \{(0,0)\}  }    \frac{ \int_{\R^{n}} \big[{|(-\Delta)^{\frac{s}{2}}u|}^2
-{\gamma_1} {\frac{u^2}{|y'|^{2s}}}+{|(-\Delta)^{\frac{s}{2}}v|}^2
-{\gamma_2} {\frac{v^2}{|y'|^{2s}}}\big]dy }
   { \Big( \int_{\R^{n}} \frac{{|u|}^{ { 2^{*}_{s} }(\alpha)}}{|y'|^{\alpha}}dy+\int_{\R^{n}} \frac{{|u|}^{ { 2^{*}_{s} }(\alpha)}}{|y'|^{\alpha}}dy   \Big)^{\frac{2}{ 2_s^*(\alpha)}} }
\end{equation}
and
\begin{equation}  \label{eq1.8}
  {\Lambda}(n,s,\alpha)=  \mathop {\inf }\limits_{(u,v) \in X \setminus \{(0,0)\}  }   \frac{ \int_{\R^{n}} \big[{|(-\Delta)^{\frac{s}{2}}u|}^2
-{\gamma_1} {\frac{u^2}{|y'|^{2s}}}+{|(-\Delta)^{\frac{s}{2}}v|}^2
-{\gamma_2} {\frac{v^2}{|y'|^{2s}}}\big]dy }{\Big( \int_{\R^{n}} \frac{{|uv|}^{   \frac{2^{*}_{s}(\alpha)}{2}     }}{|y'|^{\alpha}}dy\Big)^{\frac{2}{  2^{*}_{s}(\alpha)  }}}.
\end{equation}
Using the minimizers of $S(n,s,\beta)$ and $\Lambda(n,s,\alpha)$, we can prove the Mountain pass level $c<c^*:=\min \Big \{ \frac{2s-\beta}{2(n-\beta)} {S(n,s,\beta)}^{\frac{n-\beta}{2s-\beta}}, \frac{2s-\alpha}{n-\alpha} \Big[\frac{\Lambda(n,s,\alpha)}{2}\Big]^{\frac{n-\alpha}{2s-\alpha}} \Big \}$. Then, the Mountain pass lemma gives a $(PS)_c$ sequence $\{(u_k,v_k)\}_{k=1}^{+\infty}$ for $I$ at level $c>0$, i.e.
\begin{equation} \label{eq1.013}
\lim_{k \to +\infty}I(u_k,v_k)=c<c^*~~\mbox{and}~~ \lim_{k \to +\infty} I'(u_k,v_k)=0~~\mbox{strongly in}~~X'.
\end{equation}
Clearly, $\{(u_k,v_k)\}$ is bounded so we may assume $(u_k,v_k) \rightharpoonup (u,v)$ in $X$ for some $(u,v)\in X$. But it may occur that $u\equiv0$ or $v\equiv0$. Denote
$$d_1=\lim_{k \to +\infty}\int_{\R^n}{\frac{{|u_kv_k|}^{ \frac{{2^{*}_{s}}(\alpha)}{2} }}{|x'|^{\alpha}}}dx,~~~~d_2=\lim_{k \to +\infty}\int_{\R^{n}}    {\frac{{|u_k|}^{ {2^{*}_{s}}(\beta)}\!+\!{|v_k|}^{ {2^{*}_{s}}(\beta)}}{|x'|^{\beta}}} dx.$$
From \eqref{eq1.9}-\eqref{eq1.013}, we have
\begin{equation} \label{eq1.014}
  d_1^{\frac{2}{  2^{*}_{s}(\alpha) }}A_1\leq d_2,~~~~d_2^{\frac{2}{  2^{*}_{s}(\beta) }}A_2\leq 2d_1,
\end{equation}
where $A_1\!=\!\Lambda(n,s,\alpha)\!-\!2[\frac{n-\alpha}{2s-\alpha} c]^{\frac{2s-\alpha}{n-\alpha}}$ and $A_2\!=\!S(n,s,\beta)\!-\!  [\frac{2(n-\beta)}{2s-\beta} c]^{\frac{2s-\beta}{n-\beta}}$. Since $0<c<c^*$, we derive that $A_1>0$ and $A_2>0$. Thus $(\ref{eq1.014})$ implies that $d_1\geq{\varepsilon}_0>0$ and $d_2\geq{\varepsilon}_0>0$(If $d_1=0$ and $d_2=0$, then $c=0$, a contradiction), i.e.
$$\lim_{k \to +\infty}\int_{\R^n}{\frac{{|u_kv_k|}^{ \frac{{2^{*}_{s}}(\alpha)}{2} }}{|x'|^{\alpha}}}dx\geq {\varepsilon}_0>0,~~~~~~~~\lim_{k \to +\infty}\int_{\R^{n}}    {\frac{{|u_k|}^{ {2^{*}_{s}}(\beta)}\!+\!{|v_k|}^{ {2^{*}_{s}}(\beta)}}{|x'|^{\beta}}} dx\geq{\varepsilon}_0>0. $$
Then, the embeddings $(\ref{eq1.06})$ and the improved Sobolev inequality (\ref{eqa1.6}) imply that
$$   0<C \leq ||(u_kv_k)||_{  L^{1,n-2s+r}(\R^{n},|y'|^{-r}) }\leq C^{-1}~~~~~~ \mbox{for any}~~~~ k\geq K~~~~ \mbox{large}, $$
where $r=\frac{2\alpha}{ 2^*_{s}(\alpha) }$ and $C>0$ is a constant.
For any $k\!>\! K$, we may find ${\lambda}_k\!>\!0$ and $x_k\!=\!(x'_k,x''_k) \!\in\! \R^{m}\!\times\! \R^{n-m}$ such that
$$  {\lambda}_k^{-2s+r} \int_{B_{{\lambda}_k}(x_k)} \frac{|(u_kv_k)(y)|}{  |y'|^{r} }dy > ||(u_kv_k)||_{  L^{1,n-2s+r}(\R^{n},|y'|^{-r}) } -\frac{C}{2k} \geq C_1>0,$$
where $y=(y',y'')\in \R^{m}\times \R^{n-m}$. Let $\tilde{u}_k(x)={\lambda}_k^{ \frac{n-2s}{2} }u_k\big({\lambda}_kx', {\lambda}_k(x''-x''_k)\big)$, $\tilde{v}_k(x)={\lambda}_k^{ \frac{n-2s}{2} }v_k\big({\lambda}_kx', {\lambda}_k(x''-x''_k)\big)$ and ${\tilde{x}}_k=\frac{x_k}{{\lambda}_k}
=({\tilde{x}}'_k,{\tilde{x}}''_k)\in \R^{m}\times \R^{n-m}$, then we have
 $$(\tilde{u}_k,\tilde{v}_k) \rightharpoonup (\tilde{u},\tilde{v}) ~~~~\mbox{in}~~~~X, ~~~~~\tilde{u}\not \equiv 0,~~~~~~\tilde{v}\not \equiv 0.$$
In fact, we can prove that $\{{\tilde{x}}'_k\}$ is bounded and so there exists $R>0$ such that
\begin{equation}\label{eq1.015}
\int_{B_{R}(0)} \frac{|(\tilde{u}_k \tilde{v}_k)(x)|}{  |x'|^{r} }dx \geq \int_{B_{1}(({\tilde{x}}'_k,\vec{0}))} \frac{|(\tilde{u}_k \tilde{v}_k)(x)|}{  |x'|^{r} }dx  \geq C_1>0,
\end{equation}
where $x=(x',x'')\in \R^{m}\times \R^{n-m}$ and $\vec{0}=(0,\cdots,0)\in \R^{n-m}$.
From (\ref{eq1.015}), we have $\int_{B_{R}(0)} \frac{|(\tilde{u} \tilde{v})(x)|}{  |x'|^{r} }dx  \geq C_1>0$ since $\frac{r}{2}<s$.
Moreover, we can check that
$$\lim_{k \to +\infty}I(\tilde{u}_k,\tilde{v}_k)=c~~~~\mbox{and}~~~~ I'(\tilde{u},\tilde{v})=\lim_{k \to +\infty} I'(\tilde{u}_k,\tilde{v}_k)=0~~\mbox{strongly in}~~X'.$$

It remains to deal with the minimization problems (\ref{eq1.9})-(\ref{eq1.8}). To this end, we need some kind of compactness. Problem (\ref{eq1.9}) can be solved by using the embeddings (\ref{eq1.06}) and the inequality \eqref{eqa1.6} with $u=v$, see the proof of Proposition \ref{pro1.7}-(2)(3). Problem (\ref{eq1.8}) is more difficult since the appearance of the coupled term $\int_{\R^{n}} \frac{{|uv|}^{   \frac{2^{*}_{s}(\alpha)}{2}     }}{|y'|^{\alpha}}dy$ in the right hand side of  ${\Lambda}(n,s,\alpha)$, see the proof of Proposition \ref{pro1.7}-(1). \textbf{Thanks to the embeddings \eqref{eq1.06} and the new inequality \eqref{eqa1.6}, we can prove the existence of minimizers for ${S}(n,s,\alpha)$ and ${\Lambda}(n,s,\alpha)$ in $X$ in a direct way. Moreover, (\ref{eq1.06}) and (\ref{eqa1.6}) are useful to rule out the "vanishing" of the corresponding $(PS)$ sequence.} Finally, we extend inequality (\ref{eqa1.6}) to more general forms to study some general systems with partial weight, involving p-Laplacian especially. As far as we know, the strategy we adopt is new.

The rest of the paper is organized as follows: in Section 2, we give some preliminaries. In Section 3, we introduce the weighted Morrey space and establish improved Sobolev inequalities, i.e., we prove Proposition  \ref{prop1.4} and Corollary \ref{coro1.5}. In Section 4, we solve the minimization problems (\ref{eq1.9})-(\ref{eq1.8}). In Section 5, we prove Theorem \ref{th1.1}. In Section 6, we extend the main results of \eqref{eq1.1} to some general systems with partial weight. \\

\textbf{Notation:}  We use $\rightarrow$ and $\rightharpoonup$ to denote the strong and weak convergence in the corresponding spaces respectively. Write "Palais-Smale" sequences as $(PS)$ sequences in short. $\mathbb{N}=\{1,2,\cdots\}$ is the set of natural numbers. $\mathbb{R}$ and $\mathbb{C}$ denote the sets of real and complex numbers respectively. By saying a function is "measurable", we always mean that the function is "Lebesgue" measurable. "$\wedge$" denotes the Fourier transform and "$\vee$" denotes the inverse Fourier transform. Generic fixed and numerical constants will be denoted by $C$(with subscript in some case) and they will be allowed to vary within a single line or formula.

\section{Preliminaries}
\setcounter{equation}{0}
In this section, we give some preliminary results.

\begin{lemma}\label{lemma2.1} (Fractional Hardy inequality with partial weight: Formula (3.2) in \cite{XcTy}) \\
Let $s\in (0,1)$, $n>2s$ and $0< m<n$. Then there exists $\gamma_{H}=\!\gamma_{H}(n,m,s)>0$ such that
\begin{equation}\label{eq2.1}
  {\gamma}_{H} \int_{\R^{n}} {\frac{u^2}{|x'|^{2s}}}dx \leq \int_{\R^{n}} |(-\Delta)^{s/2}u|^2dx, ~~~~ \forall u \in {\dot{H}}^s(\R^{n})
\end{equation}
where $x\!=\!(x',x'') \in \mathbb{R}^{m} \times \mathbb{R}^{n-m}$,  $\gamma_{H}\!=\!\frac{2 \pi^{\frac{n-m}{2}} \Gamma\left(\frac{m+2 s}{2}\right)}{\Gamma\left(\frac{n+2 s}{2}\right)} \int_{0}^{1} r^{2 s-1}\left|1\!-\!r^{\frac{m-2 s}{2}}\right|^{2} \Phi_{m,s}(r) d r\!>\!0  $,
$\Phi_{m, s}(r):=\!\left|\mathbb{S}^{m-2}\right| \int_{-1}^{1} \frac{\left(1-t^{2}\right)^{\frac{m-3}{2}}}{\left(1-2 r t+r^{2}\right)^{\frac{m+2 s}{2}}} d t$ for $m \!\geq\! 2$ and  $\Phi_{m,s}(r):=\!\frac{1}{(1-r)^{1+2 s}}+\frac{1}{(1+r)^{1+2 s}}$ for $ m\!=\!1$. Here $\Gamma$ denotes the Gamma function and $\mathbb{S}^{m-2}$ denotes the surface of a $(m-1)$ dimensional unit ball.
\end{lemma}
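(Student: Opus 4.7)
The plan is to prove the inequality via a ground state representation in the spirit of Frank--Lieb--Seiringer, using the candidate $w(x):=|x'|^{-(m-2s)/2}$ as a virtual ground state. Starting from the Bourgain--Brezis--Mironescu double integral
$$\int_{\R^n}|(-\Delta)^{s/2}u|^2\, dx = \frac{C_{n,s}}{2}\iint_{\R^n\times\R^n}\frac{(u(x)-u(y))^2}{|x-y|^{n+2s}}\,dx\,dy,$$
I would write $u = w v$ and expand via the algebraic identity
$$(u(x)-u(y))^2 = w(x)w(y)\bigl(v(x)-v(y)\bigr)^2 + v(x)^2 w(x)\bigl(w(x)-w(y)\bigr) + v(y)^2 w(y)\bigl(w(y)-w(x)\bigr).$$
After integration, the last two terms symmetrise into $\int v^2 w\cdot(-\Delta)^s w\, dx$, while the $(v(x)-v(y))^2$ contribution is manifestly nonnegative. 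The inequality therefore follows once one establishes the pointwise eigenvalue-type identity $(-\Delta)^s w(x) = \gamma_H\, w(x)/|x'|^{2s}$ for $x'\neq 0$, with $\gamma_H$ given by the explicit expression in the statement.

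To carry out this pointwise computation I would write $(-\Delta)^s w$ via its singular integral representation and, since $w$ depends only on $y'$, integrate out the $y''$ variable first using the beta integral
$$\int_{\R^{n-m}}\frac{dy''}{\bigl(|x'-y'|^2+|x''-y''|^2\bigr)^{(n+2s)/2}} = \frac{\pi^{(n-m)/2}\,\Gamma\bigl((m+2s)/2\bigr)}{\Gamma\bigl((n+2s)/2\bigr)}\,|x'-y'|^{-(m+2s)},$$
which already accounts for the full constant prefactor appearing in $\gamma_H$. The remaining $m$-dimensional principal value integral is treated by scaling $y'=|x'|z$ to factor out $|x'|^{-(m+2s)/2}$, then passing to polar coordinates $z=r\omega$ on $\R^m$ with pole along $x'/|x'|$; azimuthal symmetry collapses the angular integral to exactly $\Phi_{m,s}(r)$. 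Splitting the radial integral at $r=1$, applying the involution $r\mapsto 1/r$ on $(1,\infty)$ together with the identity $\Phi_{m,s}(1/r) = r^{m+2s}\Phi_{m,s}(r)$, and folding both halves onto $(0,1)$ produces the complete square $(1-r^{(m-2s)/2})^2$ in the integrand and yields exactly the stated formula for $\gamma_H$.

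The main obstacle will be rigorously justifying the principal value near the diagonal $y=x$, since the integrand of $(-\Delta)^s w$ is only conditionally integrable there and the Fubini step above also requires care for the same reason. I would circumvent this by working with the symmetrised increment $\frac{w(x)-\tfrac12(w(x+h)+w(x-h))}{|h|^{n+2s}}$, which is absolutely integrable in $h$ thanks to $C^{1,1}$-regularity of $w$ away from $\{x'=0\}$; Fubini in $y''$ and the subsequent radial manipulations then apply unconditionally, and only at the end does one identify the result with the original principal value. A secondary verification is that $\gamma_H>0$, which is immediate from the final representation since the integrand $r^{2s-1}(1-r^{(m-2s)/2})^2\Phi_{m,s}(r)$ is strictly positive on $(0,1)$ and $\Phi_{m,s}$ is a strictly positive kernel. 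Density of $C_0^\infty(\R^n)$ in $\dot{H}^s(\R^n)$ finally extends the inequality from smooth compactly supported test functions (where the ground state representation is applied) to the full space.
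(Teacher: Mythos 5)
The paper does not actually prove this lemma: it is quoted verbatim as Formula (3.2) of \cite{XcTy}, so there is no internal argument to compare against. Your proposal is a correct, self-contained reconstruction of the standard Frank--Seiringer ground-state representation, which is precisely the mechanism that produces the quoted constant: the algebraic identity you write is exact (expanding both sides gives $w(x)^2v(x)^2-2w(x)w(y)v(x)v(y)+w(y)^2v(y)^2$), the beta integral in $y''$ yields the prefactor $\pi^{(n-m)/2}\Gamma(\tfrac{m+2s}{2})/\Gamma(\tfrac{n+2s}{2})$, the angular integral on $\mathbb{S}^{m-1}$ with $t=\langle x'/|x'|,\omega\rangle$ is exactly $\Phi_{m,s}(r)$, and the fold $r\mapsto 1/r$ together with $\Phi_{m,s}(1/r)=r^{m+2s}\Phi_{m,s}(r)$ does combine $\int_0^1 r^{m-1}(1-r^{-(m-2s)/2})\Phi\,dr+\int_0^1 r^{2s-1}(1-r^{(m-2s)/2})\Phi\,dr$ into the complete square $\int_0^1 r^{2s-1}(1-r^{(m-2s)/2})^2\Phi_{m,s}(r)\,dr$; integrability near $r=1$ holds because $\Phi_{m,s}(r)\sim c(1-r)^{-1-2s}$ is compensated by $(1-r^{(m-2s)/2})^2\sim c(1-r)^2$. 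Your treatment of the principal value via symmetrized second differences is the standard rigorous route (alternatively one regularizes $w$ as in Frank--Seiringer and passes to the limit). Two small points deserve attention. First, track the normalization constant relating $\int|(-\Delta)^{s/2}u|^2\,dx$ to the Gagliardo double integral: as stated, $\gamma_H$ carries no such factor, so either the cited reference absorbs it into its seminorm convention or it is implicitly present; your argument must produce the constant in whichever convention the lemma uses. Second, your closing remark that the integrand is ``strictly positive on $(0,1)$'' fails in the degenerate case $m=2s$ (i.e.\ $m=1$, $s=\tfrac12$), where $(1-r^{(m-2s)/2})^2\equiv 0$ and hence $\gamma_H=0$; this is really a defect of the lemma as quoted (it claims $\gamma_H>0$ for all $0<m<n$), and it is harmless for the paper since every application assumes $m>2s$, but your positivity claim should be stated under that restriction.
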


\begin{lemma}\label{lemma2.2} (Fractional Hardy-Sobolev inequality with partial weight: Lemma 3.1 of \cite{XcTy}) \\
Let $s\in (0,1)$, $0< \alpha \leq 2s<n$ and $0< m<n$. Then there exist positive constants $c$ and $C$ such that
\begin{equation}\label{eq2.2}
   \Big( \int_{\R^{n}} \frac{{|u|}^{ { 2^{*}_{s} }(\alpha)}}{|x'|^{\alpha}}dx  \Big)^{\frac{2}{  2^{*}_{s}(\alpha)  }} \leq c \int_{\R^{n}} |(-\Delta)^{s/2}u|^2dx,~~~~\forall u \in {\dot{H}}^s(\R^{n}).
\end{equation}
Moreover, if $\gamma<\gamma_{H}=\gamma_{H}(n,m,s)$, then
\begin{equation}\label{eq2.3}
  C \Big( \int_{\R^{n}} \frac{{|u|}^{ { 2^{*}_{s} }(\alpha)}}{|x'|^{\alpha}}dx  \Big)^{\frac{2}{  2^{*}_{s}(\alpha)  }} \leq \int_{\R^{n}} |(-\Delta)^{s/2}u|^2dx-{\gamma} \int_{\R^{n}} {\frac{u^2}{|x'|^{2s}}}dx,~~~~\forall u \in {\dot{H}}^s(\R^{n}).
\end{equation}
Here $x\!=\!(x',x'') \in \mathbb{R}^{m} \times \mathbb{R}^{n-m}$.\\
\end{lemma}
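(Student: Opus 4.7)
The plan is to derive \eqref{eq2.2} by interpolating the standard fractional Sobolev embedding against the fractional Hardy inequality \eqref{eq2.1} of Lemma \ref{lemma2.1}, and then to obtain \eqref{eq2.3} as an immediate coercivity corollary.

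For \eqref{eq2.2}, I would begin from the pointwise algebraic identity
$$\frac{|u|^{2^{*}_{s}(\alpha)}}{|x'|^{\alpha}} \;=\; |u|^{a}\cdot\Big(\frac{u^{2}}{|x'|^{2s}}\Big)^{\alpha/(2s)}, \qquad a := 2^{*}_{s}(\alpha) - \frac{\alpha}{s} = \frac{n(2s-\alpha)}{s(n-2s)},$$
valid whenever $0<\alpha<2s$. Then I would apply H\"older's inequality with conjugate exponents $p=\frac{2s}{2s-\alpha}$ and $p'=\frac{2s}{\alpha}$, chosen precisely so that the Hardy-weighted factor enters with exponent exactly $1$. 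A direct computation gives $ap=\frac{2n}{n-2s}=2^{*}_{s}$, the classical fractional Sobolev critical exponent, so that
$$\int_{\R^{n}}\frac{|u|^{2^{*}_{s}(\alpha)}}{|x'|^{\alpha}}\,dx \;\leq\; \Big(\int_{\R^{n}}|u|^{2^{*}_{s}}\,dx\Big)^{(2s-\alpha)/(2s)}\Big(\int_{\R^{n}}\frac{u^{2}}{|x'|^{2s}}\,dx\Big)^{\alpha/(2s)}.$$
The first factor is then controlled by the classical (unweighted) fractional Sobolev embedding $\dot{H}^{s}(\R^{n})\hookrightarrow L^{2^{*}_{s}}(\R^{n})$, and the second by Lemma \ref{lemma2.1}. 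Collecting exponents via the arithmetic identity $2^{*}_{s}\cdot\frac{2s-\alpha}{2s}+\frac{\alpha}{s}=2^{*}_{s}(\alpha)$ and raising the resulting estimate to the $\frac{2}{2^{*}_{s}(\alpha)}$ power yields \eqref{eq2.2}. The borderline case $\alpha=2s$ collapses directly to Lemma \ref{lemma2.1}, since there $2^{*}_{s}(2s)=2$.

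For \eqref{eq2.3}, with \eqref{eq2.2} in hand, the remaining task is a one-line coercivity argument. From Lemma \ref{lemma2.1} and the hypothesis $\gamma<\gamma_{H}$, I would deduce
$$\int_{\R^{n}}|(-\Delta)^{s/2}u|^{2}\,dx - \gamma\int_{\R^{n}}\frac{u^{2}}{|x'|^{2s}}\,dx \;\geq\; C_{\gamma}\,\|u\|_{\dot{H}^{s}(\R^{n})}^{2}, \qquad C_{\gamma}:=\min\{1,\,1-\gamma/\gamma_{H}\}>0,$$
splitting into the two cases $0\leq\gamma<\gamma_{H}$ (Lemma \ref{lemma2.1} bounds the subtracted term by $\gamma/\gamma_{H}$ times the $\dot{H}^{s}$-norm) and $\gamma<0$ (the subtracted term is automatically nonnegative). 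Chaining this coercive estimate with \eqref{eq2.2} immediately produces \eqref{eq2.3} with constant $C=C_{\gamma}/c$.

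I do not anticipate any real obstacle: the argument is a routine interpolation once Lemma \ref{lemma2.1} is available. The only mildly delicate point is the algebraic matching forcing $ap$ to land exactly on $2^{*}_{s}$; this is not a coincidence but a consequence of the common scale-invariance of the Sobolev, Hardy, and Hardy-Sobolev integrals under the dilation $u(\cdot)\mapsto\lambda^{(n-2s)/2}u(\lambda\cdot)$, which is precisely what makes the H\"older interpolation close at the critical exponent.
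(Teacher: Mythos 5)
Your proposal is correct, and every step checks out: the pointwise factorization, the H\"older exponents $p=\tfrac{2s}{2s-\alpha}$, $p'=\tfrac{2s}{\alpha}$, the identity $ap=2^{*}_{s}$, the exponent bookkeeping $2^{*}_{s}\cdot\tfrac{2s-\alpha}{2s}+\tfrac{\alpha}{s}=2^{*}_{s}(\alpha)$, and the two-case coercivity bound for \eqref{eq2.3}. Note that the paper itself gives no proof of this lemma --- it is quoted verbatim from Lemma 3.1 of \cite{XcTy} --- so there is no in-paper argument to compare against; your interpolation of the endpoint cases ($\alpha=0$: fractional Sobolev; $\alpha=2s$: the partial-weight Hardy inequality of Lemma \ref{lemma2.1}) is the standard derivation and makes the statement self-contained, which is arguably an improvement over the bare citation. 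The only dependence you import beyond Lemma \ref{lemma2.1} is the unweighted embedding $\dot{H}^{s}(\R^{n})\hookrightarrow L^{2^{*}_{s}}(\R^{n})$, which is classical and already used elsewhere in the paper.
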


From Lemma \ref{lemma2.1}, the following inequality holds for all $\gamma_i<\gamma_{H}$ and any $u \in {\dot{H}}^s(\R^{n})$,
\begin{equation}\label{eq2.4}
  (1-\frac{\gamma_{i}^{+}}{\gamma_{H}})\int_{\R^{n}} |(-\Delta)^{s/2}u|^2dx \leq ||u||_{\gamma_i}^2 \leq (1+\frac{\gamma_{i}^{-}}{\gamma_{H}}) \int_{\R^{n}} |(-\Delta)^{s/2}u|^2dx,~~~~i=1,2
\end{equation}
where $||u||_{\gamma_i}\!=\!{\Big(\int_{\R^{n}} |(-\Delta)^{s/2}u|^2dx\!-\!{\gamma_i} \int_{\R^{n}} {\frac{u^2}{|x'|^{2s}}}dx \Big)}^{\frac{1}{2}}$ and $\gamma_{i}^{\pm}\!=\!\max\{\pm\gamma_i,0\}$. Define two \textbf{equivalent norms} on ${\dot{H}}^s(\R^{n})$ by $||\cdot||_{\gamma_i}$ and the inner products of $u,v \in {\dot{H}}^s(\R^{n})$ by
$$ {\langle u, v \rangle}_{\gamma_i}=\int_{\R^{n}} (-\Delta)^{\frac{s}{2}}u(-\Delta)^{\frac{s}{2}}vdx-{\gamma_i} \int_{\R^{n}} {\frac{uv}{|x'|^{2s}}}dx,~~~~i=1,2.$$
For any $(u,v),(\phi,\psi) \in X=\dot{H}^{s}(\R^{n})\times\dot{H}^{s}(\R^{n})$, we define
\begin{equation}\label{DeF2.4}
{\langle (u,v), (\phi,\psi) \rangle}_X={\langle u, \phi \rangle}_{\gamma_1}+{\langle v, \psi \rangle}_{\gamma_2},~~~~||(u,v)||^2=||u||_{\gamma_1}^2+||v||_{\gamma_2}^2.
\end{equation}\\

\begin{lemma}  \label{lemma2.3}
 Let $s \in (0,1)$, $0<r<s<\frac{n}{2}$ and $0< m<n$. If $\{u_k\}$ is a bounded sequence in ${\dot{H}}^s(\R^{n})$ and $u_k \rightharpoonup   u ~~\mbox{in}~~{\dot{H}}^s(\R^{n})$, then
$$\frac{|u_k|}{ {|x'|}^{r}  }    \rightarrow   \frac{|u|}{ {|x'|}^{r}  } ~~\mbox{in}~~L^2_{loc}(\R^{n}),~~~~~~~~\frac{u_k}{ {|x'|}^{r}  }    \rightarrow   \frac{u}{ {|x'|}^{r}  } ~~\mbox{in}~~L^2_{loc}(\R^{n}),$$
where $x\!=\!(x',x'') \in \mathbb{R}^{m} \times \mathbb{R}^{n-m}$.
\end{lemma}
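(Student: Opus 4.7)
The plan is to split any bounded set $K \subset \mathbb{R}^n$ into a neighborhood of the singular set $\{x'=0\}$ and its complement, and treat the two pieces separately, exploiting $r<s$ on the singular piece and the boundedness of the weight on the other.

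More precisely, fix a bounded set $K \subset \mathbb{R}^n$ and, for $\delta>0$, set $K_\delta^- := K \cap \{|x'|\leq \delta\}$ and $K_\delta^+ := K \cap \{|x'|>\delta\}$. On $K_\delta^-$ I would use the elementary bound $|x'|^{-2r} = |x'|^{2(s-r)}\cdot|x'|^{-2s} \leq \delta^{2(s-r)}\,|x'|^{-2s}$ (which needs $r<s$) together with the fractional Hardy inequality of Lemma \ref{lemma2.1} applied to $u_k-u$:
\begin{equation*}
\int_{K_\delta^-} \frac{|u_k-u|^2}{|x'|^{2r}}\,dx \leq \delta^{2(s-r)} \int_{\mathbb{R}^n} \frac{|u_k-u|^2}{|x'|^{2s}}\,dx \leq \frac{\delta^{2(s-r)}}{\gamma_H}\,\|u_k-u\|_{\dot{H}^s(\mathbb{R}^n)}^2 \leq C\,\delta^{2(s-r)},
\end{equation*}
where $C$ is independent of $k$ because $\{u_k\}$ is $\dot{H}^s$-bounded. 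On $K_\delta^+$ the weight is trivially controlled:
\begin{equation*}
\int_{K_\delta^+} \frac{|u_k-u|^2}{|x'|^{2r}}\,dx \leq \delta^{-2r}\int_{K} |u_k-u|^2\,dx.
\end{equation*}
The standard Rellich-Kondrachov-type compact embedding $\dot{H}^s(\mathbb{R}^n)\hookrightarrow L^2_{\mathrm{loc}}(\mathbb{R}^n)$ (valid since $2<2_s^*$) forces the right-hand side to tend to $0$ as $k\to+\infty$ for each fixed $\delta$.

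Combining the two estimates, given any $\varepsilon>0$ I first choose $\delta$ so small that $C\delta^{2(s-r)}<\varepsilon/2$, and then take $k$ large enough that the $K_\delta^+$ piece is smaller than $\varepsilon/2$; this proves $u_k/|x'|^r \to u/|x'|^r$ in $L^2(K)$. The convergence $|u_k|/|x'|^r \to |u|/|x'|^r$ in $L^2(K)$ follows from the same estimate because $\bigl||u_k|-|u|\bigr|^2 \leq |u_k-u|^2$ pointwise.

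I do not anticipate a genuine obstacle here; the argument is standard once one notices that the gap $s-r>0$ is exactly what converts the singular $|x'|^{-2s}$ Hardy weight into an integrable small quantity after factoring $\delta^{2(s-r)}$. The only point that deserves a line of justification in the final write-up is that $\{u_k\}$ is also bounded in $H^s(K)$ (which follows from $\dot{H}^s(\mathbb{R}^n)\hookrightarrow L^{2_s^*}(\mathbb{R}^n)$ and H\"older on the bounded set $K$), so that the Rellich-Kondrachov compactness is legitimately applicable.
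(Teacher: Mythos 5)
Your proof is correct and is exactly the standard argument that the paper defers to by citing Lemma 2.3 of \cite{gBtY}: split a bounded set near and away from $\{x'=0\}$, absorb the singular piece into the partial-weight Hardy inequality of Lemma \ref{lemma2.1} using the gap $s-r>0$, and use Rellich--Kondrachov on the complement, with $\bigl||u_k|-|u|\bigr|\leq|u_k-u|$ handling the absolute-value version. No gaps; the one justification you flag (boundedness in $H^s(K)$ via the Sobolev embedding and H\"older) is indeed the only detail worth writing out.
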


\begin{proof}
It is similar to the proof of Lemma 2.3 in \cite{gBtY}.
\end{proof}

\begin{lemma}\label{LmMa2.5}
Let $s \in(0,1)$, $0<\alpha<2s<n$ and $0< m<n$. If $\{(u_k,v_k)\}_{k \in \mathbb{N}}$ is a bounded sequence in $X=\dot{H}^{s}(\R^{n})\times\dot{H}^{s}(\R^{n})$ and $(u_k,v_k) \rightharpoonup (u,v)$ in $X$, then we have
\begin{align*}
\mathop {\lim }\limits_{k  \to \infty} \int_{\R^{n}} \frac{{|{u}_k{v}_k|}^{   \frac{2^{*}_{s}(\alpha)}{2}     }}{|x'|^{\alpha}}dx=\int_{\R^{n}} \frac{{|{u}{v}|}^{   \frac{2^{*}_{s}(\alpha)}{2}     }}{|x'|^{\alpha}}dx+\mathop {\lim }\limits_{k  \to \infty}\int_{\R^{n}} \frac{{|({u}_k-{u})({v}_k-{v})|}^{   \frac{2^{*}_{s}(\alpha)}{2}     }}{|x'|^{\alpha}}dx,
\end{align*}
where $x\!=\!(x',x'') \in \mathbb{R}^{m} \times \mathbb{R}^{n-m}$.
\end{lemma}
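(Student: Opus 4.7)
The strategy is a weighted Brezis--Lieb decomposition on the measure space $(\mathbb{R}^{n}, d\mu)$ with $d\mu = |x'|^{-\alpha}dx$, applied to the product sequence $w_{k} := u_{k}v_{k}$ regarded as an element of $L^{p}(d\mu)$ with $p := \frac{2^{*}_{s}(\alpha)}{2}$. Note $p>1$ since $\alpha<2s$.

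First, I would extract a subsequence along which $u_{k}\to u$ and $v_{k}\to v$ a.e.\ on $\mathbb{R}^{n}$: for any fixed $r\in(0,s)$, Lemma \ref{lemma2.3} gives $u_{k}/|x'|^{r}\to u/|x'|^{r}$ in $L^{2}_{loc}$, hence (on a further subsequence) a.e.\ convergence of $u_{k}$ off the null set $\{x'=0\}$, and likewise for $v_{k}$. Therefore $w_{k}\to uv$ a.e. Moreover, Cauchy--Schwarz with respect to $d\mu$ together with Lemma \ref{lemma2.2} gives
\[
\int_{\mathbb{R}^{n}}\frac{|w_{k}|^{p}}{|x'|^{\alpha}}\,dx
\le\Big(\int_{\mathbb{R}^{n}}\frac{|u_{k}|^{2p}}{|x'|^{\alpha}}dx\Big)^{1/2}
\Big(\int_{\mathbb{R}^{n}}\frac{|v_{k}|^{2p}}{|x'|^{\alpha}}dx\Big)^{1/2}\le C,
\]
since $2p=2^{*}_{s}(\alpha)$; so $\{w_{k}\}$ is bounded in $L^{p}(\mathbb{R}^{n},d\mu)$.

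Next, I would invoke the classical Brezis--Lieb lemma in the $\sigma$-finite measure space $(\mathbb{R}^{n},d\mu)$ to obtain
\[
\lim_{k\to\infty}\int_{\mathbb{R}^{n}}\Big(|w_{k}|^{p}-|w_{k}-uv|^{p}\Big)\frac{dx}{|x'|^{\alpha}}
=\int_{\mathbb{R}^{n}}\frac{|uv|^{p}}{|x'|^{\alpha}}dx.
\]
Comparing with the statement, it then suffices to prove that
\[
\lim_{k\to\infty}\int_{\mathbb{R}^{n}}\Big(|w_{k}-uv|^{p}-|(u_{k}-u)(v_{k}-v)|^{p}\Big)\frac{dx}{|x'|^{\alpha}}=0.
\]
Writing $A_{k}:=(u_{k}-u)(v_{k}-v)$ and $B_{k}:=u(v_{k}-v)+(u_{k}-u)v$ so that $w_{k}-uv=A_{k}+B_{k}$, the elementary inequality $\big||A+B|^{p}-|A|^{p}\big|\le \varepsilon|A|^{p}+C_{\varepsilon}|B|^{p}$ (valid for $p>1$) and the uniform boundedness of $\int|A_{k}|^{p}d\mu$ reduce the problem to showing $\int|B_{k}|^{p}d\mu\to 0$, i.e.\ vanishing of the cross terms.

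The main obstacle is precisely this last step, and I would dispatch it via Vitali's convergence theorem applied to $|u(v_{k}-v)|^{p}$ (the other cross term is symmetric). Pointwise, $u(v_{k}-v)\to 0$ a.e. For the uniform integrability and tightness, Cauchy--Schwarz with respect to $d\mu$ yields, for any measurable $E\subset\mathbb{R}^{n}$,
\[
\int_{E}\frac{|u(v_{k}-v)|^{p}}{|x'|^{\alpha}}dx
\le \Big(\int_{E}\frac{|u|^{2p}}{|x'|^{\alpha}}dx\Big)^{1/2}
\Big(\int_{\mathbb{R}^{n}}\frac{|v_{k}-v|^{2p}}{|x'|^{\alpha}}dx\Big)^{1/2}.
\]
The second factor is uniformly bounded by Lemma \ref{lemma2.2} (since $2p=2^{*}_{s}(\alpha)$), while the first factor is small whenever $\mu(E)$ is small, by absolute continuity of the $\mu$-integral of the fixed function $|u|^{2p}\in L^{1}(d\mu)$; the same estimate with $E=\{|x|>R\}$ gives tightness as $R\to\infty$. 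Vitali's theorem then produces $\int|u(v_{k}-v)|^{p}/|x'|^{\alpha}dx\to 0$, completing the argument.
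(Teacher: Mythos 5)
Your argument is correct, but it takes a genuinely different and more explicit route than the paper's. The paper's proof is a one-line citation: it invokes the vector-valued Brezis--Lieb theorem (Theorem 2 of \cite{HBZI}) applied to the pair $(u_k,v_k)$ and the bilinear function $j(s+it)=|st|^{p}$, $p=\tfrac{2^{*}_{s}(\alpha)}{2}$, on the measure space $(\mathbb{R}^n,|x'|^{-\alpha}dx)$, without verifying that this $j$ admits the comparison functions $\phi_\varepsilon,\psi_\varepsilon$ that Theorem 2 requires --- a verification that is itself a small exercise for a product nonlinearity. You instead keep everything scalar: apply the ordinary Brezis--Lieb lemma to the product sequence $w_k=u_kv_k$ in $L^p(d\mu)$, then show the residual $\int\big(|w_k-uv|^p-|(u_k-u)(v_k-v)|^p\big)\,d\mu\to0$ by splitting off the cross terms $u(v_k-v)+(u_k-u)v$, using the elementary inequality $\big||A+B|^p-|A|^p\big|\le\varepsilon|A|^p+C_\varepsilon|B|^p$, and killing $\int|u(v_k-v)|^p\,d\mu$ by Vitali's theorem (uniform integrability and tightness in $L^1(d\mu)$ do indeed follow from Cauchy--Schwarz, $|u|^{2^{*}_{s}(\alpha)}\in L^1(d\mu)$ via Lemma \ref{lemma2.2}, the $\sigma$-finiteness of $\mu$, and the uniform $L^{2^{*}_{s}(\alpha)}(d\mu)$-bound on $v_k-v$). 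This buys a fully self-contained proof that avoids reproving a bilinear Brezis--Lieb lemma. Two small points worth spelling out: you pass to a subsequence for a.e.\ convergence, so you should remark that since the conclusion identifies a fixed limit, it holds along the whole sequence by the subsequence-of-subsequences device; and the $\lim$'s in the statement of Lemma \ref{LmMa2.5} are informal --- the quantity that actually converges is $\int\frac{|u_kv_k|^p}{|x'|^\alpha}dx-\int\frac{|(u_k-u)(v_k-v)|^p}{|x'|^\alpha}dx$, which is exactly the form in which the lemma is used in Proposition \ref{pro1.7}, and exactly what your argument establishes.
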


\begin{proof}
The proof is similar to that of Theorem 2 in \cite{HBZI} when using $j:\mathbb{C} \to \mathbb{C}$ defined by $j(s + it):=\frac{{|{s}{t}|}^{   \frac{2^{*}_{s}(\alpha)}{2}     }}{|x'|^{\alpha}}$, for $s, t \in \mathbb{R}$, in that theorem. Here $i$ is the imaginary unit. One can also refer to Lemma 2.4 in \cite{TxGL} or Lemma 2.3 in \cite{CzwZ}.  \\
\end{proof}

\section{proof of Proposition  \ref{prop1.4} and Corollary \ref{coro1.5}}
In this section, we give some basic properties of a partial weighted Morrey space and then prove Proposition  \ref{prop1.4} and Corollary \ref{coro1.5}.

The Morrey spaces were introduced by C. B. Morrey in 1938 \cite{CBMO} to investigate the local behavior of solutions
to some partial differential equations. Nowadays the Morrey spaces were extended to more general cases(see \cite{GPAP,YKSS,YSAW}). Let $p\in[1,+\infty)$ and $\gamma\in (0,n)$, the usual homogeneous Morrey space
$$L^{p,\gamma}(\R^{n})=\Big \{ u:  ||u||_{  L^{p,\gamma}(\R^{n}) } < +\infty \Big \}$$
was introduced in \cite{GPAP} with the norm
$$ ||u||_{  L^{p,\gamma}(\R^{n}) }=\mathop {\sup }\limits_{R>0,x \in \R^{n}} \Big \{R^{\gamma-n} \int_{B_R(x)} |u(y)|^pdy  \Big \}^{\frac{1}{p}} .$$
One can see that if $\gamma=n$ then $L^{p,\gamma}(\R^{n})$ coincide with $L^p(\R^{n})$ for any $p\geq1$; Similarly $L^{p,0}(\R^{n})$ coincide with $L^{\infty}(\R^{n})$.

Now we introduce a partial weighted Morrey space $L^{p,\gamma +\lambda}(\R^{n},|y'|^{-\lambda})$, which was motivated by  \cite{YKSS,YSAW,gBtY}. For $p\in[1,+\infty)$, $\gamma, \lambda>0$,  $\gamma+\lambda \in (0,n)$ and $0< m<n$, we say a Lebesgue measurable function $u: \R^{n}\rightarrow \R$ belongs to $L^{p,\gamma +\lambda}(\R^{n},|y'|^{-\lambda})$ if
$$ ||u||_{  L^{p,\gamma +\lambda}(\R^{n},|y'|^{-\lambda}) }=\mathop {\sup }\limits_{R>0,x \in \R^{n}} \Big \{R^{\gamma+\lambda-n} \int_{B_R(x)} \frac{|u(y)|^p}{  |y'|^{\lambda} }dy  \Big \}^{\frac{1}{p}} < +\infty,$$
where $y\!=\!(y',y'') \in \mathbb{R}^{m} \times \mathbb{R}^{n-m}$.

Then the following fundamental properties \textbf{(1)}-\textbf{(6)} hold via H\"{o}lder's inequality:

\textbf{(1)}~$L^{p\rho}(\R^{n},|y'|^{-\rho\lambda}) \hookrightarrow L^{p,\gamma +\lambda}(\R^{n},|y'|^{-\lambda})$ for $\rho=\frac{n}{\gamma +\lambda}>1$.

\textbf{(2)}~For any $p\in(1,+\infty)$, we have $L^{p,\gamma +\lambda}(\R^{n},|y'|^{-\lambda}) \hookrightarrow L^{1,\frac{\gamma}{p}+ \frac{\lambda}{p} }(\R^{n},|y'|^{-\frac{\lambda}{p}}) .$

\textbf{(3)}~Take $\gamma+\lambda=n$, we get $L^{p}(\R^{n},|y'|^{-\lambda})$.

\textbf{(4)}~Let $p\geq2$ and $u,v\in L^{p,\gamma +\lambda}(\R^{n},|y'|^{-\lambda})$, then $uv$ belongs to $L^{{p}/{2},\gamma +\lambda}(\R^{n},|y'|^{-\lambda})$ since
\begin{align*}
||(uv)||_{  L^{{p}/{2},\gamma +\lambda}(\R^{n},|y'|^{-\lambda}) }
\leq ||u||_{  L^{p,\gamma +\lambda}(\R^{n},|y'|^{-\lambda}) }  ||v||_{  L^{p,\gamma +\lambda}(\R^{n},|y'|^{-\lambda})} < +\infty.
\end{align*}
Moreover, if we assume $s\in (0,1)$, $0<\alpha<2s<n$ and $0< m<n$, then we have

\textbf{(5)}~For any $p\in[1,2^*_{s}(\alpha))$, ${\dot{H}}^s(\R^{n}) \hookrightarrow L^{2^*_{s}(\alpha)}(\R^{n},|y'|^{-\alpha}) \hookrightarrow L^{p,\frac{n-2s}{2}p+pr}(\R^{n},|y'|^{-pr})$ with $r=\frac{\alpha}{2^*_{s}(\alpha)}$ and the three norms in these spaces share the same dilation invariance. 

\textbf{(6)}~For any $p\in[1,2^*_{s})$, ${\dot{H}}^s(\R^{n}) \hookrightarrow L^{2^*_{s}}(\R^{n}) \hookrightarrow L^{p,\frac{n-2s}{2}p}(\R^{n})$, refer to page 815 in \cite{GPAP}. \\

%
%
%
%

\begin{lemma} (Theorem 1 in \cite{ESRW}, or Theorem D in \cite{BMRW}) \label{lemma3.4}
Suppose that $0<\tilde{s}<n$, $1 < \tilde{p} \leq \tilde{q} <+\infty$, $\tilde{p}'=\frac{\tilde{p}}{\tilde{p}-1}$ and that $V$ and $W$ are nonnegative measurable functions on $\R^{n}$, $n \geq 1$. If for some $\sigma>1$
\begin{equation}\label{eq3.3}
|Q|^{ \frac{\tilde{s}}{n}+\frac{1}{\tilde{q}}-\frac{1}{\tilde{p}}} { \Big(  \frac{1}{|Q|} \int_{Q}V^{\sigma}dy    \Big) }^{ \frac{1}{{\tilde{q}} \sigma} } { \Big(  \frac{1}{|Q|} \int_{Q}W^{(1-{\tilde{p}}')\sigma}dy    \Big) }^{ \frac{1}{ {\tilde{p}}' \sigma} } \leq C_{\sigma}
\end{equation}
for all cubes $Q \subset {\R}^n$, then for any function $f \in L^{\tilde{p}}({\R}^n,W(y))$ we have
\begin{equation}\label{eq3.4}
 { \Big(   \int_{{\R}^n}|{\ell}_{\tilde{s}}f(y)|^{\tilde{q}}V(y)dy    \Big) }^{ \frac{1}{\tilde{q}} }  \leq C C_{\sigma} { \Big(   \int_{{\R}^n}|f(y)|^{\tilde{p}}W(y)dy    \Big) }^{ \frac{1}{\tilde{p}} }
\end{equation}
where $C=C(\tilde{p},\tilde{q},n)$ and ${\ell}_{\tilde{s}}f(y)=\int_{{\R}^n} \frac{f(z)}{|y-z|^{n-{\tilde{s}}}}dz$ is the Riesz potential of order $\tilde{s}$. 
\end{lemma}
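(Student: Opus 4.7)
\textbf{Proof plan for Lemma \ref{lemma3.4}.} This statement is a quoted instance of the classical two-weight norm inequality for the Riesz potential of Sawyer and Wheeden, so the honest plan is simply to invoke \cite{ESRW} or \cite{BMRW} directly. For the reader's benefit, let me sketch the strategy I would follow to reprove it, and indicate the role of the strengthened hypothesis $\sigma > 1$.

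First I would reduce the continuous Riesz potential to its dyadic model via a Whitney decomposition (or Christ's dyadic grid), yielding the pointwise control
$$\ell_{\tilde{s}} f(y) \leq C \sum_{Q \ni y} |Q|^{\tilde{s}/n - 1} \int_Q |f(z)|\, dz,$$
so that it suffices to bound the dyadic fractional operator on the right. For each cube $Q$, the Hölder splitting $f = (f W^{1/\tilde{p}}) \cdot W^{-1/\tilde{p}}$ with exponents $(\tilde{p}, \tilde{p}')$ factors $\int_Q |f|$ into a piece that will feed the right-hand side of (\ref{eq3.4}) and a weight average of $W^{1-\tilde{p}'}$. A further application of Hölder's inequality with exponent $\sigma$ converts this weight average into exactly the factor appearing in (\ref{eq3.3}), while the cube-volume factor $|Q|^{\tilde{s}/n - 1}$ matches the balance on the left-hand side of (\ref{eq3.3}) after accounting for the scaling of the $L^{\tilde{q}}(V)$ norm.

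The main obstacle is assembling these cube-by-cube estimates into a single global $L^{\tilde{q}}(V)$ bound, and this is precisely where $\sigma > 1$ is crucial. The self-improvement margin upgrades the uniform Muckenhoupt-type condition (\ref{eq3.3}) into a Sawyer-type testing condition, which is known to characterize two-weight boundedness of fractional integrals. One then concludes via a good-$\lambda$ inequality against a suitable maximal function, or equivalently via the corona-type decomposition developed in \cite{ESRW}, extracting the constant $C = C(\tilde{p}, \tilde{q}, n)$ independent of $V$, $W$, and $\sigma$. Since the details are routine and are already carried out in the cited references, we do not reproduce them here.
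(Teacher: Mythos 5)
The paper offers no proof of this lemma at all: it is quoted verbatim as Theorem 1 of \cite{ESRW} (equivalently Theorem D of \cite{BMRW}), so your decision to invoke those references directly is exactly what the paper does, and your accompanying sketch of the Sawyer--Wheeden argument (dyadic reduction of the Riesz potential, H\"older splitting against $W$, and the role of the power bump $\sigma>1$) is consistent with the cited proof. Nothing further is required.
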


\noindent\textbf{Proof of Proposition \ref{prop1.4}}

For $u \in {\dot{H}}^s(\R^{n})$, we have $\hat{g}(\xi):=\!|\xi|^s\hat{u}(\xi) \!\in\! L^2(\R^{n})$ and
$||u||_{{\dot{H}}^s(\R^{n})}\!=\!||g||_{L^2(\R^{n})}$  by Plancherel's theorem. Thus, $u(x)\!=\!(\frac{1}{|\xi|^s})^{\vee}*g(x)\!=\!{\ell}_sg(x)$, where ${\ell}_sg(x)\!=\!\int_{{\R}^n} \frac{g(z)}{|x-z|^{n-{s}}}dz$. Similarly, let $v \!\in\! {\dot{H}}^s(\R^{n})$ and $\hat{f}(\xi):=\!|\xi|^s\hat{v}(\xi) \!\in\! L^2(\R^{n})$, we have $v(x)\!=\!(\frac{1}{|\xi|^s})^{\vee}*f(x)\!=\!{\ell}_sf(x)$.

Firstly, let $2s\!<\!m\!<\!n$ and take $\tilde{s}\!=\!s$, $\tilde{p}\!=\!2$, $\max \{ 2, 2^*_{s}(\alpha)-\frac{\alpha}{s}, \frac{2^*_{s}(\alpha)-\frac{\alpha}{s}}
{2^*_{s}(\alpha)-\frac{2\alpha}{m}}\cdot2^*_{s}(\alpha) \} \!<\! {\tilde{q}}\!<\! 2^*_{s}(\alpha)$, $W(y)\!\equiv \!1$, $V(y)\!=\!\frac{  {|(uv)(y)|}^  {\frac{ 2^{*}_{s}(\alpha)-{\tilde{q}}}{2} }   }   {  |y'|^{\alpha}  }$ and $\sigma\!=\!\frac{2s}{\alpha}\!>\!1$ in Lemma \ref{lemma3.4}, where $y\!=\!(y',y'') \!\in\! \mathbb{R}^{m} \!\times\! \mathbb{R}^{n-m}$. Then, (\ref{eq3.3}) becomes
\begin{equation}\label{eq3.5}
|Q|^{ \frac{s}{n}+\frac{1}{\tilde{q}}-\frac{1}{2}} { \Big(  \frac{1}{|Q|} \int_{Q}V^{\sigma}dy    \Big) }^{ \frac{1}{{\tilde{q}} \sigma} }  \leq C_{\sigma}.
\end{equation}

Secondly, we verify condition (\ref{eq3.3}). From $2^*_{s}(\alpha)\!-\!\frac{\alpha}{s}\!<\! {\tilde{q}}\!<\! 2^*_{s}(\alpha)$, we have $0\!<\!{\frac{[ 2^{*}_{s}(\alpha)-{\tilde{q}}]\sigma}{2} }
\!<\!1$. Moreover, we have $\frac{ t \sigma {\alpha}}{ 1- {\frac{[ 2^{*}_{s}(\alpha)-{\tilde{q}}]\sigma}{2} } }\!<\!m$ by $\tilde{q}\!>\!\frac{2^*_{s}(\alpha)-\frac{\alpha}{s}}
{2^*_{s}(\alpha)-\frac{2\alpha}{m}}\cdot2^*_{s}(\alpha)$, where $t:=\frac{\tilde{q}}{2^{*}_{s}(\alpha)}\in(0,1)$. For any fixed $x\in {\R}^n$, replacing $Q$ by ball $B_{R}(x)$, 
we deduce by H\"{o}lder's inequality that
\begin{align*}
R^{-n} &\int_{B_R(x)}V^{\sigma}dy=R^{-n} \int_{B_R(x)}{\frac{{|uv|}^{\frac{[ 2^{*}_{s}(\alpha)-{\tilde{q}}]\sigma}{2} }}{|y'|^  {  \sigma {\alpha}    }   }}dy=R^{-n} \int_{B_R(x)} \frac{1}{{|y'|^  { t \sigma {\alpha}    }   }  } \cdot {\frac{{|uv|}^{\frac{[ 2^{*}_{s}(\alpha)-{\tilde{q}}]\sigma}{2} }}{|y'|^  { (1-t) \sigma {\alpha}    }   }}dy  \\
&\leq R^{-n}  { \Big( \int_{B_R(x)} \frac{dy}{{|y'|^  {  \frac{ t \sigma {\alpha}}{ 1- {\frac{[ 2^{*}_{s}(\alpha)-{\tilde{q}}]\sigma}{2} } }       }   }  }  \Big ) }^{1- {\frac{[ 2^{*}_{s}(\alpha)-{\tilde{q}}]\sigma}{2} }   }   { \Big( \int_{B_R(x)}  {\frac{{|uv|}}{|y'|^r  }}dy  \Big ) }^{{\frac{[ 2^{*}_{s}(\alpha)-{\tilde{q}}]\sigma}{2} }  }   \\
&\leq R^{-n}  { \Big( \int_{|y''-x''|\leq R} dy''  \int_{|y'-x'|\leq R} \frac{dy'}{{|y'|^  {  \frac{ t \sigma {\alpha}}{ 1- {\frac{[ 2^{*}_{s}(\alpha)-{\tilde{q}}]\sigma}{2} } }       }   }  }  \Big ) }^{1- {\frac{[ 2^{*}_{s}(\alpha)-{\tilde{q}}]\sigma}{2} }   }   { \Big( \int_{B_R(x)}  {\frac{{|uv|}}{|y'|^r  }}dy  \Big ) }^{{\frac{[ 2^{*}_{s}(\alpha)-{\tilde{q}}]\sigma}{2} }  }   \\
&\leq R^{-n}  { \Big( \int_{B_R(x'')} dy''  \int_{|y'|\leq R} \frac{dy'}{{|y'|^  {  \frac{ t \sigma {\alpha}}{ 1- {\frac{[ 2^{*}_{s}(\alpha)-{\tilde{q}}]\sigma}{2} } }       }   }  }  \Big ) }^{1- {\frac{[ 2^{*}_{s}(\alpha)-{\tilde{q}}]\sigma}{2} }   }   { \Big( \int_{B_R(x)}  {\frac{{|uv|}}{|y'|^r  }}dy  \Big ) }^{{\frac{[ 2^{*}_{s}(\alpha)-{\tilde{q}}]\sigma}{2} }  }   \\
&\leq CR^{-t\alpha \sigma-\frac{n[{2^{*}_{s}(\alpha)}-{\tilde{q}}]\sigma}{2}}   { \Big( \int_{B_R(x)}  {\frac{{|uv|}}{|y'|^r }}dy  \Big ) }^{\frac{[ 2^{*}_{s}(\alpha)-{\tilde{q}}]\sigma}{2} },
\end{align*}
where $r:={\frac{2(1-t) {\alpha}}{{2^{*}_{s}(\alpha)}-{\tilde{q}}} }=\frac{2\alpha}{2^{*}_{s}(\alpha)}$. Therefore,
\begin{align*}
&R^{s+\frac{n}{{\tilde{q}}}-\frac{n}{2}} { \Big(  R^{-n} \int_{ B_R(x) }V^{\sigma}dy    \Big) }^{ \frac{1}{{\tilde{q}} \sigma} }  \\
\leq & R^{s+\frac{n}{{\tilde{q}}}-\frac{n}{2}} \Big\{ CR^{-t\alpha \sigma-\frac{n[{2^{*}_{s}(\alpha)}-{\tilde{q}}]\sigma}{2}}   { \Big( \int_{B_R(x)}  {\frac{{|uv|}}{|y'|^r }}dy  \Big ) }^{\frac{[ 2^{*}_{s}(\alpha)-{\tilde{q}}]\sigma}{2} }  \Big\}^{\frac{1}{{\tilde{q}} \sigma}}  \\
\leq & C \Big\{  R^{  (s+\frac{n-t\alpha}{{\tilde{q}}}-\frac{n}{2})
{\frac{2{\tilde{q}}}{{2^{*}_{s}(\alpha)}-{\tilde{q}}}}   }  \cdot R^{-n}  \cdot {  \int_{B_R(x)}  {\frac{{|uv|}}{|y'|^r }}dy   }   \Big\}^{  \frac{{2^{*}_{s}(\alpha)}-{\tilde{q}}}{2\tilde{q}}  } \\
= & C \Big\{  R^{   n-2s+r }  \cdot R^{-n}  \cdot {  \int_{B_R(x)}  {\frac{{|uv|}}{|y'|^  { r }   }}dy   }   \Big\}^{  \frac{{2^{*}_{s}(\alpha)}-{\tilde{q}}}{2\tilde{q}}  }
\leq  C {||(uv)||}^{\frac{{2^{*}_{s}(\alpha)}-{\tilde{q}}}
{2\tilde{q}}}_{ L^{1,n-2s+r }(\R^{n},|y'|^{-r})}:=C_{\sigma}.
\end{align*}
Since $u={\ell}_sg$ and $v={\ell}_sf$, and by Lemma \ref{lemma3.4},
\begin{align*}
&\int_{ \R^{n} }  \frac{ |(uv)(y)|^{\frac{2^*_{s}(\alpha)}{2} } }  {  |y'|^{\alpha} } dy = \int_{ \R^{n} } V(y) {|{\ell}_sg(y)|}^{\frac{\tilde{q}}{2}}{|{\ell}_sf(y)|}^{\frac{\tilde{q}}{2}} dy \\
&\leq \Big[\int_{ \R^{n} } V(y) {|{\ell}_sg(y)|}^{\tilde{q}} dy\Big]^{\frac{1}{2}} \Big[\int_{ \R^{n} } V(y) {|{\ell}_sf(y)|}^{\tilde{q}} dy\Big]^{\frac{1}{2}}   \\
& \leq (CC_{\sigma})^{\tilde{q}} ||g||^{\frac{\tilde{q}}{2}}_{L^2} ||f||^{\frac{\tilde{q}}{2}}_{L^2}   \leq C ||u||_{{\dot{H}}^s(\R^{n})}^{\frac{\tilde{q}}{2}}
 ||v||_{{\dot{H}}^s(\R^{n})}^{\frac{\tilde{q}}{2}} ||(uv)||^{\frac{2^*_{s}(\alpha)-\tilde{q}}{2}}_{  L^{1,n-2s+r}(\R^{n},|y'|^{-r}) }.
\end{align*}
Then, for any $\theta=\frac{{\tilde{q}}}{ 2^*_{s}(\alpha) }$ satisfying $\bar{\theta}<\theta<1$, we have
\begin{equation*}
 \Big( \int_{ \R^{n} }  \frac{ |(uv)(y)|^{\frac{2^*_{s}(\alpha)}{2} } }  {  |y'|^{\alpha} } dy  \Big)^{ \frac{1}{  2^*_{s} (\alpha)  }}  \leq C ||u||_{{\dot{H}}^s(\R^{n})}^{\frac{\theta}{2}}
 ||v||_{{\dot{H}}^s(\R^{n})}^{\frac{\theta}{2}} ||(uv)||^{\frac{1-\theta}{2}}_{  L^{1,n-2s+r}(\R^{n},|y'|^{-r}) },
\end{equation*}
where $s \in (0,1)$, $0<\alpha<2s<m<n$, $y\!=\!(y',y'') \in \mathbb{R}^{m} \times \mathbb{R}^{n-m}$, $r=\frac{2\alpha}{ 2^*_{s}(\alpha) }$, $\bar{\theta}=\max \{ \frac{2}{2^*_{s}(\alpha)}, 1-\frac{\alpha}{s}\cdot\frac{1}{2^*_{s}(\alpha)}, \frac{2^*_{s}(\alpha)-\frac{\alpha}{s}}{2^*_{s}(\alpha)-\frac{2\alpha}{m}} \}$ and $C=C(n,m,s,\alpha)>0$ is a constant. \qed  \\

\noindent\textbf{Proof of Corollary \ref{coro1.5}}

For $n\!\geq\!3$ and $u \!\in\! {C}_{0}^{\infty}(\R^{n})$, we have $u(x)\!=\!{\Delta}^{-1}\Delta u\!=\!C_1\int_{\R^{n}}\frac{\Delta u(y)}{{|x-y|}^{n-2}}dy\!=\!C_2\int_{\R^{n}}\frac{(x-y)\nabla u(y)}{{|x-y|}^n}dy$.
Thus, $|u(x)| \!\leq\! |{C_2}|\int_{\R^{n}}\frac{| {\nabla u(y)} |}{{|x-y|}^{n-1}}dy \!\leq\! C {\ell}_{1}(| {\nabla u} |)(x)$,
where $C_1,C_2,C$ are positive constants (depending on $n$). These inequalities hold for $n=2$ via the logarithmic kernel(See \cite{GPAP}). By density of ${C}_{0}^{\infty}(\R^{n})$ in ${D}^{1,p}(\R^{n})$, it is also true for any $u \in {D}^{1,p}(\R^{n})(n\geq 2)$.

Let $2\!\leq\! p\!<\!m\!<\!n$ and take $\tilde{s}\!=\!1$, $\tilde{p}\!=\!p$, $\max \{ p, p^*(\alpha)-\alpha,\frac{p^*(\alpha)
-\alpha}{p^*(\alpha)-\frac{p\alpha}{m}}\cdot p^*(\alpha) \} \!<\! {\tilde{q}}\!< \!p^*(\alpha)$, $W(y)\!\equiv\! 1$, $V(y)\!=\!\frac{  {|(uv)(y)|}^  {\frac{ p^*(\alpha)-{\tilde{q}}}{2} }   }   {  |y'|^{\alpha}  }$ and $\sigma\!=\!\frac{p}{\alpha}\!>\!1$ in Lemma \ref{lemma3.4}, where $y\!=\!(y',y'') \in \mathbb{R}^{m} \times \mathbb{R}^{n-m}$. The remain argument is similar to that in $ {\dot{H}}^s(\R^{n})$ with $t:=\frac{\tilde{q}}{p^{*}(\alpha)}$, $r:={\frac{p(1-t) {\alpha}}{{p^{*}(\alpha)}-{\tilde{q}}} }=\frac{p\alpha}{p^{*}(\alpha)}$, $\theta=\frac{{\tilde{q}}}{ p^*(\alpha) }\in(\bar{\theta},1)$ and $\bar{\theta}=\max \{ \frac{p}{p^*(\alpha)}, 1-\frac{\alpha}{p^*(\alpha)},\frac{p^*(\alpha)
-\alpha}{p^*(\alpha)-\frac{p\alpha}{m}} \}$. Actually, we have
\begin{equation*}
 \Big( \int_{ \R^{n} }  \frac{  {|(uv)|}^  {\frac{ p^*(\alpha)}{2} }   }   {  |y'|^{\alpha}  } dy  \Big)^{ \frac{1}{  p^* (\alpha)  }}  \leq C ||u||_{{D}^{1,p}(\R^{n})}^{\frac{\theta}{2}}
 ||v||_{{D}^{1,p}(\R^{n})}^{\frac{\theta}{2}} ||(uv)||^{\frac{1-\theta}{2}}_{  L^{{p}/{2},n-p+r}(\R^{n},|y'|^{-r}) }.
\end{equation*}


\begin{lemma} (Theorem 1 in \cite{GPAP}) \label{lemma3.6}
Let $s \in (0,1)$, $n>2s$ and $ 2^*_{s}=\frac{2n}{n-2s} $. Then there exists $C=C(n,s)>0$ such that for any $ \max\{ \frac{2}{2^*_{s}},1-\frac{1}{2^*_{s}} \} < \theta <1 $ and for any $1\leq p <2^*_{s}$
\begin{equation}\label{eq3.6}
 ||u||_{ L^{ 2^*_{s} }(\R^{n})} \leq C ||u||_{{\dot{H}}^s(\R^{n})}^{\theta} ||u||^{1-\theta}_{  L^{p,\frac{n-2s}{2}p}(\R^{n}) },~~~~ \forall u \in {\dot{H}}^s(\R^{n}).
\end{equation}
\end{lemma}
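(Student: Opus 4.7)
My plan is to mimic, in the weight-free single-function setting, the strategy already used in the proof of Proposition \ref{prop1.4}. First, by Plancherel's theorem, for $u\in{\dot{H}}^s(\R^{n})$ the function $\hat{g}(\xi):=|\xi|^s\hat{u}(\xi)$ lies in $L^{2}(\R^{n})$ with $||g||_{L^{2}(\R^{n})}=||u||_{{\dot{H}}^s(\R^{n})}$, and inverting the Fourier transform (up to a harmless normalizing constant) gives $u(x)={\ell}_s g(x)$, where ${\ell}_s$ is the Riesz potential of order $s$ appearing in Lemma \ref{lemma3.4}. This reduces the improved Sobolev estimate to a two-weight bound for ${\ell}_s$ applied to $g\in L^{2}(\R^{n})$.

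Next, I apply Lemma \ref{lemma3.4} with $\tilde{s}=s$, $\tilde{p}=2$, $W\equiv 1$, $V(y)=|u(y)|^{2^*_s-\tilde{q}}$, and $\sigma=p/(2^*_s-\tilde{q})$, where the free parameter $\tilde{q}$ is chosen in the range $(\max\{2,\,2^*_s-p\},\,2^*_s)$ so that $\sigma>1$ and $\tilde{p}\leq\tilde{q}$. This choice is tailored so that $V^{\sigma}=|u|^{p}$ and $\int_{\R^{n}}V|{\ell}_sg|^{\tilde{q}}dy=\int_{\R^{n}}|u|^{2^*_s}dy$. The defining property of the Morrey norm gives
$$\int_{B_R(x)}|u(y)|^{p}dy\leq R^{n-\frac{n-2s}{2}p}||u||^{p}_{L^{p,\frac{n-2s}{2}p}(\R^{n})}$$
for every ball $B_R(x)$, and substituting this into (\ref{eq3.3}) reduces the verification of the two-weight condition to the scaling identity
$$s+\frac{n}{\tilde{q}}-\frac{n}{2}-\frac{(n-2s)(2^*_s-\tilde{q})}{2\tilde{q}}=0,$$
which follows at once from $(n-2s)\cdot 2^*_s=2n$. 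Hence (\ref{eq3.3}) holds uniformly in $R$ and $x$ with $C_{\sigma}:=C||u||^{(2^*_s-\tilde{q})/\tilde{q}}_{L^{p,\frac{n-2s}{2}p}(\R^{n})}$.

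Plugging into (\ref{eq3.4}) then yields
$$\int_{\R^{n}}|u|^{2^*_s}dy\leq (CC_{\sigma})^{\tilde{q}}||g||^{\tilde{q}}_{L^{2}(\R^{n})}=C\,||u||^{\tilde{q}}_{{\dot{H}}^s(\R^{n})}\,||u||^{2^*_s-\tilde{q}}_{L^{p,\frac{n-2s}{2}p}(\R^{n})}.$$
Setting $\theta:=\tilde{q}/2^*_s$ and raising the inequality to the power $1/2^*_s$ produces (\ref{eq3.6}). The admissible range for $\theta$ is dictated by the two constraints on $\tilde{q}$: the bound $\tilde{p}=2<\tilde{q}$ translates into $\theta>2/2^*_s$, while the strict requirement $\sigma>1$, applied with the smallest allowed $p=1$, forces $\tilde{q}>2^*_s-1$, i.e. $\theta>1-1/2^*_s$; the upper bound $\theta<1$ is automatic from $\tilde{q}<2^*_s$. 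The main technical point, exactly as in the proof of Proposition \ref{prop1.4}, is the $R$-exponent cancellation — the algebraic identity that encodes the critical Sobolev exponent $2^*_s$; once it is in place, everything else is a direct reading of Lemma \ref{lemma3.4}.
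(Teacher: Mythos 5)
The paper does not prove Lemma \ref{lemma3.6}; it is imported as Theorem~1 of \cite{GPAP} and used as a black box (the only internal comment is Remark~3.7, which notes that the $p=2$, $u=v$, $\alpha=0$ case of \eqref{eqa1.6} recovers \eqref{eq3.6}). Your proposal supplies a self-contained proof, and it is correct. You take exactly the machinery the paper uses for Proposition~\ref{prop1.4} — writing $u={\ell}_s g$ with $||g||_{L^2}=||u||_{\dot H^s}$ and invoking the Sawyer--Wheeden two-weight estimate (Lemma~\ref{lemma3.4}) with $W\equiv 1$, a weight $V$ built from $|u|$, and a free exponent $\tilde q$ — but drop the partial weight $|y'|^{-\alpha}$ and collapse $u=v$, so the cube/ball average $|Q|^{-1}\int_Q V^\sigma$ is controlled directly by the Morrey norm rather than by the H\"older manipulations in the weighted proof. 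The exponent cancellation $s+\tfrac{n}{\tilde q}-\tfrac n2-\tfrac{(n-2s)(2^*_s-\tilde q)}{2\tilde q}=0$ checks out (it reduces to $(n-2s)2^*_s=2n$), the bookkeeping $C_\sigma\lesssim ||u||^{(2^*_s-\tilde q)/\tilde q}_{L^{p,\frac{n-2s}{2}p}}$ is right, and the constraints $\tilde q>2$ (so $\tilde p\le\tilde q$), $\tilde q>2^*_s-p$ (so $\sigma>1$) and $\tilde q<2^*_s$ translate correctly into $\theta>2/2^*_s$, $\theta>1-p/2^*_s$ and $\theta<1$; taking the worst case $p=1$ gives precisely the stated threshold $\max\{2/2^*_s,\,1-1/2^*_s\}$. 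Compared with what the paper offers (a citation plus the $p=2$ observation of Remark~3.7), your argument is strictly more informative: it handles the full range $1\le p<2^*_s$ by a direct application of the same lemma, and makes the source of the admissible $\theta$-range transparent. One small point worth a sentence if you wished to polish: the inequality is vacuous when $||u||_{L^{p,\frac{n-2s}{2}p}}=+\infty$, but in fact $\dot H^s\hookrightarrow L^{2^*_s}\hookrightarrow L^{p,\frac{n-2s}{2}p}$ (property (6) in Section~3), so the right-hand side is always finite.
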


\begin{remark}
If $u=v$ and $\alpha=0$, then inequality (\ref{eqa1.6}) becomes (\ref{eq3.6}) with $p=2$.
\end{remark}

\section{Solving the minimization problems (\ref{eq1.9})-(\ref{eq1.8})}
In this section, we solve the minimization problems (\ref{eq1.9})-(\ref{eq1.8}). Using the embeddings (\ref{eq1.06}) and the inequality \eqref{eqa1.6}, we can prove the existence of minimizers for
\begin{equation*}
  {\Lambda}(n,s,\alpha)=  \mathop {\inf }\limits_{(u,v) \in X \setminus \{(0,0)\}  }   \frac{ {||(u,v)||}^2 }{\Big( \int_{\R^{n}} \frac{{|uv|}^{   \frac{2^{*}_{s}(\alpha)}{2}     }}{|y'|^{\alpha}}dy\Big)^{\frac{2}{  2^{*}_{s}(\alpha)  }}},
\end{equation*}
and
\begin{equation*}
   {S}(n,s,\alpha)=\mathop {\inf }\limits_{(u,v) \in X \setminus \{(0,0)\}  }    \frac{ {||(u,v)||}^2 }{ \Big( \int_{\R^{n}} \frac{{|u|}^{ { 2^{*}_{s} }(\alpha)}}{|y'|^{\alpha}}dy+\int_{\R^{n}} \frac{{|u|}^{ { 2^{*}_{s} }(\alpha)}}{|y'|^{\alpha}}dy   \Big)^{\frac{2}{ 2_s^*(\alpha)}} },
\end{equation*}
where $X=\dot{H}^{s}(\R^{n})\times\dot{H}^{s}(\R^{n})$ and $||(u,v)||^2=||u||_{\gamma_1}^2+||v||_{\gamma_2}^2$ was defined in  \eqref{DeF2.4}.

\begin{proposition}\label{pro1.7}
Let $s \in(0,1)$ and $X=\dot{H}^{s}(\R^{n})\times\dot{H}^{s}(\R^{n})$. Then  \\
$(1)$ If $0\!<\!\alpha\!<\!2s\!<\!n$, $2s\!<\!m\!<\!n$ and $\gamma_1,\gamma_2\!<\!\gamma_{H}$, then ${\Lambda}(n,s,\alpha)$ is attained in $X \!\setminus\! \{(0,0)\}$; \\
$(2)$ If $0\!<\!\alpha\!<\!2s\!<\!n$, $2s\!<\!m\!<\!n$ and $\gamma_1,\gamma_2\!<\!\gamma_{H}$, then $S(n,s,\alpha)$ is attained in $X \!\setminus\! \{(0,0)\}$; \\
$(3)$ If $n\!>\!m\!>\!2s$ and $0\!\leq\!\gamma_1,\gamma_2\!<\!\gamma_{H}$, then ${S}(n,s,0)$ is attained in $X \!\setminus \!\{(0,0)\}$.\\
\end{proposition}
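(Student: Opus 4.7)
\textbf{Proof Plan for Proposition \ref{pro1.7}.}

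The overall strategy is to use the scaling and translation invariances of the minimization problems to normalize minimizing sequences, then apply the improved Sobolev inequality \eqref{eqa1.6} (together with the embedding chain \eqref{eq1.06}) to prevent the minimizing sequences from concentrating to zero or escaping to infinity. The Brezis--Lieb splitting in Lemma \ref{LmMa2.5} together with Lemma \ref{lemma2.3} (compactness of the Hardy term) will then upgrade weak convergence to strong convergence of the critical integrals, yielding a nontrivial minimizer.

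For part (1), I would pick a minimizing sequence $\{(u_k,v_k)\}\subset X$ with $\int_{\R^n}|u_kv_k|^{2^*_s(\alpha)/2}|y'|^{-\alpha}dy=1$ and $\|(u_k,v_k)\|^2\to\Lambda(n,s,\alpha)$. Applying \eqref{eqa1.6} with a fixed $\theta\in(\bar\theta,1)$ and using the boundedness of $\|u_k\|_{\dot H^s},\|v_k\|_{\dot H^s}$, I get a uniform lower bound on the Morrey norm $\|u_kv_k\|_{L^{1,n-2s+r}(\R^n,|y'|^{-r})}$ with $r=2\alpha/2^*_s(\alpha)$. Hence I can find $\lambda_k>0$ and $x_k=(x'_k,x''_k)$ with
\begin{equation*}
\lambda_k^{-2s+r}\int_{B_{\lambda_k}(x_k)}\frac{|(u_kv_k)(y)|}{|y'|^{r}}dy\geq C_1>0.
\end{equation*}
I then rescale $\tilde u_k(x)=\lambda_k^{(n-2s)/2}u_k(\lambda_k x',\lambda_k(x''-x''_k))$ and similarly $\tilde v_k$; the numerator and denominator of $\Lambda$, the Morrey norm, and the localized concentration are all invariant. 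Next, following the outline given in the introduction, I would show that $\{\tilde x'_k/\lambda_k\}$ stays bounded (using the weight $|y'|^{-r}$ with $r/2<s$, which keeps the concentrated mass from escaping in the $y'$--direction), so up to subsequence $(\tilde u_k,\tilde v_k)\rightharpoonup(\tilde u,\tilde v)$ in $X$ with
\begin{equation*}
\int_{B_R(0)}\frac{|\tilde u\tilde v|}{|x'|^{r}}dx\geq C_1>0
\end{equation*}
by Lemma \ref{lemma2.3}; in particular both $\tilde u\not\equiv0$ and $\tilde v\not\equiv0$. Finally, Lemma \ref{LmMa2.5} gives
\begin{equation*}
1=\int_{\R^n}\frac{|\tilde u\tilde v|^{2^*_s(\alpha)/2}}{|x'|^\alpha}dx+\lim_{k\to\infty}\int_{\R^n}\frac{|(\tilde u_k-\tilde u)(\tilde v_k-\tilde v)|^{2^*_s(\alpha)/2}}{|x'|^\alpha}dx,
\end{equation*}
while the Brezis--Lieb identity for $\|\cdot\|_{\gamma_i}$ (combined with Lemma \ref{lemma2.3} to handle the Hardy term) gives $\|(\tilde u_k,\tilde v_k)\|^2=\|(\tilde u,\tilde v)\|^2+\|(\tilde u_k-\tilde u,\tilde v_k-\tilde v)\|^2+o(1)$. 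Comparing both terms with the definition of $\Lambda(n,s,\alpha)$ and using the strict subadditivity of $t\mapsto t^{2^*_s(\alpha)/2}$ forces the residual coupled integral to vanish, so $(\tilde u,\tilde v)$ attains the infimum.

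For parts (2) and (3), the problem is simpler because the coupling in the denominator is replaced by a sum. Writing $A=\int|u|^{2^*_s(\alpha)}|y'|^{-\alpha}dy$ and $B=\int|v|^{2^*_s(\alpha)}|y'|^{-\alpha}dy$, the inequality $(A+B)^{2/2^*_s(\alpha)}\leq A^{2/2^*_s(\alpha)}+B^{2/2^*_s(\alpha)}$ (valid since $2/2^*_s(\alpha)<1$) gives
\begin{equation*}
S(n,s,\alpha)\geq\min\Bigl\{\bar\Lambda^{\gamma_1}(n,m,s,\alpha),\bar\Lambda^{\gamma_2}(n,m,s,\alpha)\Bigr\},
\end{equation*}
where $\bar\Lambda^{\gamma_i}(n,m,s,\alpha)$ is the single--function best constant in \eqref{eq2.3}. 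The reverse inequality is obtained by plugging in $(u,0)$ or $(0,v)$, so $S(n,s,\alpha)$ equals this minimum. In case (2), the single--function minimizer exists by \cite{XcTy} (generalization of \eqref{sgalaxies1.5}--\eqref{sGalaxies1.6}), so $S(n,s,\alpha)$ is attained by a semi--nontrivial pair. In case (3), where $\alpha=0$ and $0\leq\gamma_i<\gamma_H$, the same reduction works, and existence of the single--function minimizer can be recovered by the same minimizing--sequence/rescaling scheme, applying \eqref{eqa1.6} with $u=v$ to prevent vanishing and Lemma \ref{LmMa2.5} to identify the limit.

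The main obstacle will be part (1): ensuring that after rescaling the weak limit has \emph{both} components nontrivial (not just $\tilde u\tilde v\neq0$ in some trivial sense). This is precisely where the improved Sobolev inequality \eqref{eqa1.6} is essential, because it lower--bounds the coupled Morrey quantity $\|\tilde u_k\tilde v_k\|_{L^{1,n-2s+r}}$ and converts that into a genuine concentration of the product $\tilde u\tilde v$ on a bounded set, which in turn forces each factor to be nonzero. A secondary technicality is verifying the boundedness of $\{\tilde x'_k/\lambda_k\}$; I expect this to follow from a standard argument exploiting that the weight $|y'|^{-r}$ decays at infinity in the $y'$--direction slower than $2s$, combined with the uniform $\dot H^s$ bounds.
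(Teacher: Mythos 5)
Your treatment of part (1) is essentially the paper's own proof: normalize the minimizing sequence, use the embeddings \eqref{eq1.06} and the improved inequality \eqref{eqa1.6} to bound $\|u_kv_k\|_{L^{1,n-2s+r}(\R^{n},|y'|^{-r})}$ from below and above, rescale and translate in $x''$, rule out $|\tilde{x}'_k|\to\infty$ using the decay of $|x'|^{-r}$, obtain $\tilde{u}\not\equiv0$, $\tilde{v}\not\equiv0$ from Lemma \ref{lemma2.3}, and close with Lemma \ref{LmMa2.5} together with the subadditivity of $t\mapsto t^{2/2^{*}_{s}(\alpha)}$ (the norm splitting needs no Brezis--Lieb argument since $\|\cdot\|_{\gamma_i}$ is a Hilbert norm). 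That part is correct and matches the paper.

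For part (2) you take a genuinely different route: the mediant inequality plus strict subadditivity reduce $S(n,s,\alpha)$ to $\min\{\bar{\Lambda}^{\gamma_1},\bar{\Lambda}^{\gamma_2}\}$ and show that minimizers must be semi-trivial, which is cleaner and more informative than the paper's ``repeat the argument of (1) with $u=v$.'' However, the scalar attainment you invoke is not in \cite{XcTy}: that reference treats only $\gamma=0$, while here $\gamma_1,\gamma_2<\gamma_H$ may be nonzero (even negative), so you must still prove that $\inf_{u}\|u\|_{\gamma_i}^{2}\big/\big(\int_{\R^{n}}|u|^{2^{*}_{s}(\alpha)}|x'|^{-\alpha}dx\big)^{2/2^{*}_{s}(\alpha)}$ is attained. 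This is fillable by exactly the rescaling scheme of part (1) applied with $u=v$ in \eqref{eqa1.6}, which is what the paper does, so it is a citation gap rather than a fatal one.

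Part (3) contains a genuine gap. You propose to ``apply \eqref{eqa1.6} with $u=v$,'' but \eqref{eqa1.6} is stated, and is only provable via the Sawyer--Wheeden criterion with $\sigma=2s/\alpha>1$, for $\alpha>0$. When $\alpha=0$ one has $r=2\alpha/2^{*}_{s}(\alpha)=0$, and the decisive step of your scheme --- showing $\{\tilde{x}'_k\}$ is bounded via $\int_{B_{1}((\tilde{x}'_k,\vec{0}))}|\tilde{u}_k\tilde{v}_k|\,|x'|^{-r}dx\leq C(|\tilde{x}'_k|-1)^{-r}\to0$ --- collapses, because the unweighted Morrey quantity is invariant under \emph{all} translations while the Hardy term $\int u^{2}|x'|^{-2s}dx$ is not; concentration can escape to $|x'|\to\infty$ and kill the Hardy contribution. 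This is precisely why the paper switches tools for $\alpha=0$: symmetric decreasing rearrangement (legitimate exactly because $\gamma_1,\gamma_2\geq0$, so rearrangement does not increase the quotient) combined with the unweighted improved Sobolev inequality \eqref{eq3.6}. You need to either adopt that argument or supply another mechanism to control the $x'$-translations when $\alpha=0$.
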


\noindent\textbf{Proof of Proposition \ref{pro1.7} }

$\textbf{(1)}$ If $ 0<\alpha<2s<m <n $ and $\gamma_1,\gamma_2<\gamma_H$, let $\{(u_k,v_k)\}$ be a minimizing sequence of ${\Lambda}(n,s,\alpha)$, that is
$$   \int_{\R^{n}} \frac{{|(u_kv_k)(y)|}^{   \frac{2^{*}_{s}(\alpha)}{2}     }}{|y'|^{\alpha}}dy=1,~~~~~~~~{||(u_k,v_k)||}^2 \rightarrow {\Lambda}(n,s,\alpha),$$
where $y\!=\!(y',y'') \in \mathbb{R}^{m} \times \mathbb{R}^{n-m}$. Then the embeddings (\ref{eq1.06}) and the improved Sobolev inequality \eqref{eqa1.6} imply that there exists $C>0$ such that
$$   0<C \leq ||(u_kv_k)||_{  L^{1,n-2s+r}(\R^{n},|y'|^{-r}) }
\leq ||u_k||_{  L^{2,n-2s+r}(\R^{n},|y'|^{-r}) } ||v_k||_{  L^{2,n-2s+r}(\R^{n},|y'|^{-r}) }\leq C^{-1},$$
where $r\!=\!\frac{2\alpha}{ 2^*_{s}(\alpha) }$. For any $k\!\geq\!1$, we may find ${\lambda}_k\!>\!0$ and $x_k\!=\!(x'_k,x''_k) \!\in\! \R^{m}\!\times\! \R^{n-m}$ such that
$$  {\lambda}_k^{-2s+r} \int_{B_{{\lambda}_k}(x_k)} \frac{|(u_kv_k)(y)|}{  |y'|^{r} }dy > ||(u_kv_k)||_{  L^{1,n-2s+r}(\R^{n},|y'|^{-r}) } -\frac{C}{2k} \geq C_1>0,$$
where $y\!=\!(y',y'') \in \mathbb{R}^{m} \times \mathbb{R}^{n-m}$.

Let $\tilde{u}_k(x)={\lambda}_k^{ \frac{n-2s}{2} }u_k\big({\lambda}_kx', {\lambda}_k(x''-x''_k)\big)$, $\tilde{v}_k(x)={\lambda}_k^{ \frac{n-2s}{2} }v_k\big({\lambda}_kx', {\lambda}_k(x''-x''_k)\big)$ and ${\tilde{x}}_k=\frac{x_k}{{\lambda}_k}
=({\tilde{x}}'_k,{\tilde{x}}''_k)\in \R^{m}\times \R^{n-m}$, then
\begin{equation}\label{Eeq4.1}
\int_{B_{1}(({\tilde{x}}'_k,\vec{0}))} \frac{|(\tilde{u}_k \tilde{v}_k)(x)|}{  |x'|^{r} }dx  \geq C_1>0,
\end{equation}
where $x\!=\!(x',x'') \!\in\! \mathbb{R}^{m} \!\times\! \mathbb{R}^{n-m}$ and $\vec{0}\!=\!(0,\cdots,0)\!\in\! \R^{n-m}$. We claim that $\{  {\tilde{x}}'_k \}$ is bounded. Otherwise, $|{\tilde{x}}'_k| \rightarrow  +\infty$, then for any $x \in B_{1}(({\tilde{x}}'_k,\vec{0}))$, $|x'| \geq |{\tilde{x}}'_k|-1$ for $k$ large. Therefore,
\begin{align*}
\int_{B_{1}(({\tilde{x}}'_k,\vec{0}))} \frac{|\tilde{u}_k\tilde{v}_k(x)|}{  |x'|^{r} }dx  &\leq    \frac{1}{  (|{\tilde{x}}'_k|-1)^{r} }   \int_{B_{1}(({\tilde{x}}'_k,\vec{0}))}   |\tilde{u}_k\tilde{v}_k(x)|    dx  \\
&\leq       \frac{{ \Big( \int_{B_{1}(({\tilde{x}}'_k,\vec{0}))}dx\Big)}^{1-\frac{2}{2^*_{s}}}}{  (|{\tilde{x}}'_k|-1)^{r} }   { \Big( \int_{B_{1}(({\tilde{x}}'_k,\vec{0}))}   |\tilde{u}_k(x)|^{2^*_{s}}    dx  \Big) }^{\frac{1}{2^*_{s}}} { \Big( \int_{B_{1}(({\tilde{x}}'_k,\vec{0}))}   |\tilde{v}_k(x)|^{2^*_{s}}    dx  \Big) }^{\frac{1}{2^*_{s}}}\\
&\leq   \frac{C}{  (|{\tilde{x}}'_k|-1)^{r} } ||\tilde{u}_k||_{{\dot{H}}^s(\R^{n})} ||\tilde{v}_k||_{{\dot{H}}^s(\R^{n})} \leq       \frac{\tilde{C}}{  (|{\tilde{x}}'_k|-1)^{r} }   \rightarrow 0 
\end{align*}
as $k \rightarrow  +\infty$, which contradicts to (\ref{Eeq4.1}). From (\ref{Eeq4.1}), we may find $R>0$ such that
\begin{equation}\label{Eeq4.3}
  \int_{B_{R}(0)}  \frac{|(\tilde{u}_k \tilde{v}_k)(x)|}{  |x'|^{r} }dx  \geq \int_{B_{1}(({\tilde{x}}'_k,\vec{0}))} \frac{|(\tilde{u}_k \tilde{v}_k)(x)|}{  |x'|^{r} }dx \geq C_1>0.
\end{equation}

Since ${\Lambda}(n,s,\alpha)$ is invariant under the previous dilation and partial translation given by $\lambda_k$ and $x''_k$, we have
$$   \int_{\R^{n}} \frac{{|\tilde{u}_k\tilde{v}_k|}^{   \frac{2^{*}_{s}(\alpha)}{2}     }}{|x'|^{\alpha}}dx=1,~~~~~~~~{||(\tilde{u}_k,\tilde{v}_k)||}^2 \rightarrow {\Lambda}(n,s,\alpha).$$
By the fact that ${||(\tilde{u}_k,\tilde{v}_k)||}={||({u}_k,{v}_k)||}\leq C$, there exists $(\tilde{u},\tilde{v})\in X$ such that
\begin{align*} 
  (\tilde{u}_k,\tilde{v}_k) \rightharpoonup (\tilde{u},\tilde{v}) ~~\mbox{in} ~~X,~~~~~~ (\tilde{u}_k,\tilde{v}_k) \rightarrow (\tilde{u},\tilde{v}) ~~\mbox{a.e. ~~~~on}~~ \R^n\times \R^n
\end{align*}
up to subsequences. According to Lemma \ref{lemma2.3}, we have
$$\frac{|\tilde{u}_k|}{ {|x'|}^{\frac{r}{2}}  }    \rightarrow   \frac{|\tilde{u}|}{ {|x'|}^{\frac{r}{2}}  } ~~\mbox{in}~~L^2_{loc}(\R^{n}),~~~~~~~~\frac{|\tilde{v}_k|}{ {|x'|}^{\frac{r}{2}}  }    \rightarrow   \frac{|\tilde{v}|}{ {|x'|}^{\frac{r}{2}}  } ~~\mbox{in}~~L^2_{loc}(\R^{n})$$
since $\frac{r}{2}=\frac{\alpha}{ 2^*_{s}(\alpha) }<s$. Therefore, (\ref{Eeq4.3}) leads to
$$  \int_{B_{R}(0)}  \frac{|(\tilde{u} \tilde{v})(x)|}{  |x'|^{r} }dx  \geq C_1>0, $$
and we deduce that $\tilde{u}\not \equiv 0$ and $\tilde{v}\not \equiv 0$. We may verify as Lemma \ref{LmMa2.5} that
\begin{align*}
1=\int_{\R^{n}} \frac{{|\tilde{u}_k\tilde{v}_k|}^{   \frac{2^{*}_{s}(\alpha)}{2}     }}{|x'|^{\alpha}}dx=\int_{\R^{n}} \frac{{|(\tilde{u}_k-\tilde{u})(\tilde{v}_k-\tilde{v})|}^{   \frac{2^{*}_{s}(\alpha)}{2}     }}{|x'|^{\alpha}}dx+\int_{\R^{n}} \frac{{|\tilde{u}\tilde{v}|}^{   \frac{2^{*}_{s}(\alpha)}{2}     }}{|x'|^{\alpha}}dx+o(1).
\end{align*}
By the weak convergence $\tilde{u}_k \rightharpoonup \tilde{u}$ in ${\dot{H}}^s(\R^{n})$ and $\tilde{v}_k \rightharpoonup \tilde{v}$ in ${\dot{H}}^s(\R^{n})$, we have
\begin{align*}
     &{\Lambda}(n,s,\alpha)=\mathop {\lim }\limits_{k  \to \infty} {||(\tilde{u}_k,\tilde{v}_k)||}^2=  {||(\tilde{u},\tilde{v})||}^2+ \mathop {\lim }\limits_{k  \to \infty} {||(\tilde{u}_k-\tilde{u},\tilde{v}_k-\tilde{v})||}^2   \\
     &\geq {\Lambda}(n,s,\alpha)  \Big( \int_{\R^{n}} \frac{{|\tilde{u}\tilde{v}|}^{   \frac{2^{*}_{s}(\alpha)}{2}     }}{|x'|^{\alpha}}dx  \Big)^{\frac{2}{  2^{*}_{s}(\alpha)  }}
     +{\Lambda}(n,s,\alpha) \mathop {\lim }\limits_{k  \to \infty} \Big( \int_{\R^{n}} \frac{{|(\tilde{u}_k-\tilde{u})(\tilde{v}_k-\tilde{v})|}^{   \frac{2^{*}_{s}(\alpha)}{2}     }}{|x'|^{\alpha}}dx  \Big)^{\frac{2}{  2^{*}_{s}(\alpha)  }} \\
     &\geq {\Lambda}(n,s,\alpha)  \Big(\int_{\R^{n}} \frac{{|\tilde{u}\tilde{v}|}^{   \frac{2^{*}_{s}(\alpha)}{2}     }}{|x'|^{\alpha}}dx
     + \mathop {\lim }\limits_{k  \to \infty} \int_{\R^{n}} \frac{{|(\tilde{u}_k-\tilde{u})(\tilde{v}_k-\tilde{v})|}^{   \frac{2^{*}_{s}(\alpha)}{2}     }}{|x'|^{\alpha}}dx  \Big)^{ \frac{2}{2^{*}_{s}(\alpha)}}={\Lambda}(n,s,\alpha).
\end{align*}
Here we use the fact that $(a+b)^{\frac{2}{2^{*}_{s}(\alpha)}} \leq a^{\frac{2}{2^{*}_{s}(\alpha)}}+b^{\frac{2}{2^{*}_{s}(\alpha)}}$, $\forall a\geq0, b\geq0$ and $\frac{2}{2^{*}_{s}(\alpha)}<1.$\\
So we have
$$\int_{\R^{n}} \frac{{|\tilde{u}\tilde{v}|}^{   \frac{2^{*}_{s}(\alpha)}{2}     }}{|x'|^{\alpha}}dx=1,~~~~~~\mathop {\lim }\limits_{k  \to \infty}\int_{\R^{n}} \frac{{|(\tilde{u}_k-\tilde{u})(\tilde{v}_k-\tilde{v})|}^{   \frac{2^{*}_{s}(\alpha)}{2}     }}{|x'|^{\alpha}}dx=0,$$
since $\tilde{u} \not \equiv 0$ and $\tilde{v} \not \equiv 0$. It results to ${||(\tilde{u},\tilde{v})||}^2\leq {\Lambda}(n,s,\alpha)\leq {||(\tilde{u},\tilde{v})||}^2$ and so we have
$$ {\Lambda}(n,s,\alpha)=  {||(\tilde{u},\tilde{v})||}^2,~~~~ \mathop {\lim }\limits_{k  \to \infty} {||(\tilde{u}_k-\tilde{u},\tilde{v}_k-\tilde{v})||}^2=0.$$
By formula (A.11) in \cite{RSER}, we know that $\int_{\R^{n}} |(-\Delta)^{\frac{s}{2} }|\tilde{u}||^2 dx \leq  \int_{\R^{n}} |(-\Delta)^{\frac{s}{2} }\tilde{u}|^2 dx$. Hence, $(|\tilde{u}|,|\tilde{v}|)$ is also a minimizer of ${\Lambda}(n,s,\alpha)$, we can assume $\tilde{u} \geq 0$ and $\tilde{v} \geq 0$. 

$\textbf{(2)}$ If $0<\alpha<2s$ and $\gamma_1,\gamma_2<\gamma_H $, the proof is similar to that of Proposition \ref{pro1.7}-(1). In fact, this case is easier than Proposition \ref{pro1.7}-(1) because there is no coupled term in the right hand side of $S(n,s,\gamma,\alpha)$. We shall use the improved Sobolev inequality \eqref{eqa1.6} with $u=v$.

$\textbf{(3)}$ If $\alpha=0$ and $0\leq\gamma_1,\gamma_2<\gamma_H$, we combine the symmetric decreasing rearrangement technique with inequality \eqref{eq3.6}. One can refer to \cite{RFPP,SDLM,XcTy,gBtY}. \qed


\section{proof of  Theorem \ref{th1.1}}
We shall now use the minimizers of $S(n,s,\beta)$ and $\Lambda(n,s,\alpha)$ obtained in Proposition \ref{pro1.7}, to prove the existence of a nontrivial weak solution for system (\ref{eq1.1}). Recall that, for $(u,v) \in X:=\dot{H}^{s}(\R^{n})\times\dot{H}^{s}(\R^{n})$, the energy functional associated to (\ref{eq1.1}) is :
\begin{align}\label{eq5.1}
 I(u,v)&=\frac{1}{2}{||(u,v)||}^2-\frac{2}{2^{*}_{s}(\alpha)}\int_{\R^n}{\frac{{|uv|}^{ \frac{{2^{*}_{s}}(\alpha)}{2} }}{|x'|^{\alpha}}}dx-\frac{1}{ 2^{*}_{s}(\beta) }\int_{\R^{n}}{\frac{{|u|}^{ {2^{*}_{s}}(\beta)}+{|v|}^{ {2^{*}_{s}}(\beta)}}{|x'|^{\beta}}}dx,
\end{align}
where ${||(u,v)||}^2={||u||}_{\gamma_1}^2+{||v||}_{\gamma_2}^2$ was defined in (\ref{DeF2.4}). Fractional Sobolev and Hardy-Sobolev inequalities yield that $I \in C^1(X,\R)$ such that for any $(\phi,\psi)\in X$
\begin{align*}
{\langle I'(u,v),(\phi,\psi) \rangle}_X&={\langle u,\phi\rangle}_{\gamma_1}+{\langle v,\psi\rangle}_{\gamma_2}-\int_{\R^{n}} {\frac{{|u|}^{\frac{{2^{*}_{s}}(\alpha)}{2} -2}u\phi {|v|}^{\frac{{2^{*}_{s}}(\alpha)}{2}}+{|v|}^{\frac{{2^{*}_{s}}(\alpha)}{2} -2}v\psi {|u|}^{\frac{{2^{*}_{s}}(\alpha)}{2}}}{|x'|^{\alpha}}}dx\\
&-\int_{\R^{n}}  {\frac{{|u|}^{ {2^{*}_{s}}(\beta)-2}u\phi+{|v|}^{ {2^{*}_{s}}(\beta)-2}v\psi}{|x'|^{\beta}}}dx,
\end{align*}
where ${\langle ,\rangle}_X$ was defined in (\ref{DeF2.4}). 

\begin{lemma}(Mountain pass lemma, \cite{AMPH})  \label{lemma5.1}
Let $(E,||\cdot||)$ be a Banach space and $I\in  C^1(E,\R) $ satisfying the following conditions:  \\
$(1)$  $I(0)=0 ,$ \\
$(2)$ There exist $\rho,r>0 $ such that $I(z)\geq \rho $ for all $z \in E$ with $||z||=r,$  \\
$(3)$ There exist $v_0 \in E$ such that $  \lim_{t \to +\infty} {\sup }I(tv_0)<0.$  \\
Let $t_0>0$ be such that $||t_0v_0||>r$ and $I(t_0v_0)<0$, and define
$$  c:=\mathop {\inf }\limits_{g\in \Gamma}   \mathop {\sup }\limits_{t\in [0,1]} I(g(t)),$$
where $\Gamma:=\Big \{  g \in C^0([0,1],E) :g(0)=0,g(1)=t_0v_0   \Big \}$.
Then, $c\geq \rho >0$ and there exists a $(PS)$ sequence $\{z_k\} \subset E$ for $I$ at level $c$, i.e.
$$\lim_{k \to +\infty}I(z_k)=c~~\mbox{and}~~ \lim_{k \to +\infty} I'(z_k)=0~~\mbox{strongly in}~~E'.$$\\
\end{lemma}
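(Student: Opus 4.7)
The plan is to split the proof into two independent pieces: first extract the geometric lower bound $c \geq \rho > 0$ directly from hypotheses (1)--(3) and the intermediate value theorem, then produce the Palais--Smale sequence at level $c$ by a variational argument on the path space $\Gamma$. I would follow the classical Ambrosetti--Rabinowitz scheme.

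For the lower bound, fix an arbitrary $g \in \Gamma$. Since $||g(0)|| = 0 < r$ and $||g(1)|| = ||t_0 v_0|| > r$, the continuous function $t \mapsto ||g(t)||$ attains the value $r$ at some $t^* \in (0,1)$. Hypothesis (2) then gives $I(g(t^*)) \geq \rho$, hence $\max_{t \in [0,1]} I(g(t)) \geq \rho$. Taking the infimum over $\Gamma$ yields $c \geq \rho > 0$.

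For the $(PS)$ sequence, the route I would take is Ekeland's variational principle on the complete metric space $(\Gamma, d)$ with $d(g_1, g_2) := \max_{t \in [0,1]} ||g_1(t) - g_2(t)||_E$. The functional $J(g) := \max_{t \in [0,1]} I(g(t))$ is continuous on $\Gamma$ (since $I \in C^1$) and bounded below by $c$. Applying Ekeland with $\varepsilon_k = 1/k^2$ produces $g_k \in \Gamma$ such that $J(g_k) \leq c + 1/k^2$ and
\[
J(h) \geq J(g_k) - \tfrac{1}{k}\, d(h, g_k), \qquad \forall\, h \in \Gamma.
\]
From $g_k$ one then extracts a point $z_k = g_k(t_k)$ at which $I(z_k) = J(g_k) \to c$ and $||I'(z_k)||_{E'} \leq 1/k$, which furnishes the desired sequence. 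The extraction perturbs $g_k$ along a cutoff of a generic direction $w \in E$ supported near the maximizing set $M_k = \{t : I(g_k(t)) = J(g_k)\}$ and reads off the pointwise control of $I'$ from the Ekeland inequality.

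The main obstacle is exactly this last extraction: Ekeland's inequality is a global statement about whole paths, while the Palais--Smale condition is a pointwise statement on $I'$. Bridging them requires checking that any small perturbation of $g_k$ near $M_k$ remains in $\Gamma$, which in turn uses the crucial cushion that $I(0) = 0 < c$ and $I(t_0 v_0) < 0 < c$ keep the endpoints safely outside the near-critical band, so a cutoff can fix them while the interior of the path is moved. An equivalent route, avoiding the perturbation bookkeeping, is the deformation lemma: assume for contradiction that no such $(PS)$ sequence exists, so $||I'(z)||_{E'} \geq \delta_0$ on a neighborhood of $I^{-1}([c-\varepsilon_0, c+\varepsilon_0])$; build a locally Lipschitz pseudogradient flow $\eta$ strictly lowering $I$ on this band while fixing the endpoints via a cutoff, apply it to a near-minimizing $g$ with $\max I(g) \leq c + \varepsilon_0/2$, and obtain a competitor $\eta(1,\cdot) \circ g \in \Gamma$ whose maximum energy is below $c$, contradicting the definition of $c$. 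Either route completes the proof.
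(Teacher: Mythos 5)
The paper does not prove this lemma at all; it is quoted verbatim from Ambrosetti--Rabinowitz \cite{AMPH} and used as a black box. Your sketch is the standard and correct argument for that classical result: the intermediate value theorem applied to $t\mapsto \|g(t)\|$ gives $c\geq\rho>0$, and either Ekeland's variational principle on the path space or the quantitative deformation lemma (with the endpoints held fixed because $I(0)=0$ and $I(t_0v_0)<0$ lie strictly below the band around $c$) produces the $(PS)_c$ sequence without assuming any compactness. Nothing is missing relative to what the paper relies on.
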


We now use Lemma \ref{lemma5.1} to prove the following Proposition.

\begin{proposition}  \label{prop5.2}
Let $s \in(0,1)$, $0<\alpha,\beta<2s<n $, $2s<m<n $ and $\gamma_1,\gamma_2<\gamma_{H}$. Consider the functional $I$ defined in $(\ref{eq5.1})$ on the Banach space $X:=\dot{H}^{s}(\R^{n})\times\dot{H}^{s}(\R^{n})$. Then there exists a $(PS)$ sequence $\{(u_k,v_k)\} \subset X$ for $I$ at some $c\in (0,c^*)$, i.e.
\begin{equation}\label{eq5.2}
\lim_{k \to +\infty}I(u_k,v_k)=c~~\mbox{and}~~ \lim_{k \to +\infty} I'(u_k,v_k)=0~~\mbox{strongly in}~~X',
\end{equation}
where $c^*:=\min \Big \{ \frac{2s-\beta}{2(n-\beta)} {S(n,s,\beta)}^{\frac{n-\beta}{2s-\beta}}, \frac{2s-\alpha}{n-\alpha} \Big[\frac{\Lambda(n,s,\alpha)}{2}\Big]^{\frac{n-\alpha}{2s-\alpha}} \Big \}$.
\end{proposition}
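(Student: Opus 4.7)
The plan is to apply the Mountain Pass Lemma (Lemma \ref{lemma5.1}) to $I$ on $X$, which requires (i) verifying the three geometric hypotheses and (ii) producing a ray whose sup of $I$ lies strictly below $c^*$.

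For (i), clearly $I(0,0)=0$. For local positivity on a sphere, I would combine the Hardy--Sobolev inequality (\ref{eq2.3}) applied componentwise with the elementary bound $|uv|^{2^*_s(\alpha)/2}\leq \tfrac{1}{2}(|u|^{2^*_s(\alpha)}+|v|^{2^*_s(\alpha)})$ to get
\[ I(u,v)\geq \tfrac{1}{2}\|(u,v)\|^2 - C_1\|(u,v)\|^{2^*_s(\beta)} - C_2\|(u,v)\|^{2^*_s(\alpha)}, \]
which is bounded below by $\rho>0$ on a sphere $\|(u,v)\|=r$ small enough, since both critical exponents exceed $2$. For condition (3), the same exponent inequality forces $I(t(\phi,\psi))\to-\infty$ as $t\to+\infty$ for any nontrivial $(\phi,\psi)\in X$, so a sufficiently large $t_0$ provides the mountain-pass endpoint.

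For (ii), the substantive part, I would exploit the two minimizers supplied by Proposition \ref{pro1.7}. First, let $(\tilde u,\tilde v)\in X$ be a nonnegative minimizer of $\Lambda(n,s,\alpha)$, normalized so $\int_{\R^n}|\tilde u\tilde v|^{2^*_s(\alpha)/2}/|x'|^\alpha\,dx=1$ and $\|(\tilde u,\tilde v)\|^2=\Lambda(n,s,\alpha)$. A direct calculation gives
\[ \max_{t\geq 0}\Big\{\tfrac{t^2\Lambda(n,s,\alpha)}{2}-\tfrac{2t^{2^*_s(\alpha)}}{2^*_s(\alpha)}\Big\} = \tfrac{2s-\alpha}{n-\alpha}\Big(\tfrac{\Lambda(n,s,\alpha)}{2}\Big)^{\!\frac{n-\alpha}{2s-\alpha}}, \]
which is precisely the second entry of $c^*$. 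Since $\tilde u,\tilde v\not\equiv 0$, the omitted term $-\tfrac{t^{2^*_s(\beta)}}{2^*_s(\beta)}\int(|\tilde u|^{2^*_s(\beta)}+|\tilde v|^{2^*_s(\beta)})/|x'|^\beta\,dx$ is strictly negative for each $t>0$, so $\sup_t I(t(\tilde u,\tilde v))$ is strictly less than this value. Second, letting $U\in\dot H^s(\R^n)$ be a single-component minimizer realizing $S(n,s,\beta)$ (normalized by $\int|U|^{2^*_s(\beta)}/|x'|^\beta\,dx=1$ and $\|U\|_{\gamma_1}^2=S(n,s,\beta)$), the ray $t\mapsto(tU,0)$ gives $\max_t I(t(U,0))=A$, which is only equality; I would deform it to $t\mapsto t(U,\varepsilon W)$ for a suitable $W\in\dot H^s(\R^n)\setminus\{0\}$ with $\int|UW|^{2^*_s(\alpha)/2}/|x'|^\alpha\,dx>0$ and $\varepsilon>0$ small, so that the coupling term provides a strictly negative correction dominating the extra $O(\varepsilon^2)$ quadratic cost and the $O(\varepsilon^{2^*_s(\beta)})$ cost in the $\beta$-nonlinearity.

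With both rays in hand, I would select whichever is adapted to the active entry of $c^*=\min\{A,B\}$: if $B\leq A$, the ray through the minimizer of $\Lambda(n,s,\alpha)$ already gives $\sup_t I<B\leq A$; if $A<B$, the perturbed ray around the minimizer of $S(n,s,\beta)$ gives $\sup_t I<A<B$. In either case, Lemma \ref{lemma5.1} applied with that endpoint produces the desired $(PS)_c$ sequence at some $c\in(0,c^*)$. I expect the main obstacle to be the perturbation step in the case $A<B$: one must pick $W$ with sufficient overlap with $U$ near $\{x'=0\}$ and carry out the sharp bookkeeping between the quadratic cost and the coupling gain, a delicate balance typical of mountain-pass schemes for critical systems with partial-weight nonlinearities.
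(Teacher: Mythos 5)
Your mountain-pass geometry and your treatment of the $\Lambda(n,s,\alpha)$-ray coincide with the paper's: for the endpoint built from a minimizer $(\tilde u,\tilde v)$ of $\Lambda(n,s,\alpha)$, dropping the strictly negative $\beta$-term and maximizing the remaining two-term profile gives exactly $\frac{2s-\alpha}{n-\alpha}\big[\frac{\Lambda(n,s,\alpha)}{2}\big]^{\frac{n-\alpha}{2s-\alpha}}$, and strictness follows because the dropped term is negative at the positive maximizer. Where you diverge is the $S(n,s,\beta)$-ray, and there your argument has a genuine gap. The paper takes the endpoint to be the \emph{pair} minimizer $U_\beta=(u_\beta,\tilde u_\beta)$ of \eqref{eq1.9} and obtains strictness directly from the positivity of the coupling term $\int_{\R^n}|u_\beta\tilde u_\beta|^{2^*_s(\alpha)/2}|x'|^{-\alpha}\,dx$ at the maximum point $t_1>0$; no perturbation is needed. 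You instead take the semitrivial ray $t\mapsto(tU,0)$, correctly observe that it only yields equality with $\frac{2s-\beta}{2(n-\beta)}S(n,s,\beta)^{\frac{n-\beta}{2s-\beta}}$, and propose to recover strictness by passing to $t\mapsto t(U,\varepsilon W)$. This step is left unexecuted, and as described it cannot always work: the coupling gain at the (essentially unchanged) maximizing $t$ is of order $\varepsilon^{2^*_s(\alpha)/2}$, while the quadratic cost $\tfrac{t^2}{2}\varepsilon^2\|W\|_{\gamma_2}^2$ is of order $\varepsilon^2$, and $\tfrac{2^*_s(\alpha)}{2}=\tfrac{n-\alpha}{n-2s}\geq 2$ precisely when $n\leq 4s-\alpha$, which is admissible under the hypotheses (e.g.\ $n=3$, $m=2$, $s\in(3/4,1)$, $0<\alpha\leq 4s-3$). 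In that regime the perturbation \emph{raises} the sup above the target level for small $\varepsilon$, so "take $\varepsilon$ small" does not close the argument. You also normalize the single-component function by $\|U\|_{\gamma_1}^2=S(n,s,\beta)$, thereby identifying the one-component Hardy--Sobolev constant with the pair constant \eqref{eq1.9}; that identification, and the choice of which slot to place $U$ in, need justification.

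To repair your route you must either restrict the perturbation to the regime $\tfrac{n-\alpha}{n-2s}<2$ and supply a separate argument otherwise, or adopt the paper's device of using a two-component minimizer of \eqref{eq1.9} as the endpoint so that the coupling term enters with an $\varepsilon$-independent positive coefficient. Be aware, though, that the latter silently presupposes both components of that minimizer are nontrivial; since the denominator in \eqref{eq1.9} is of the form $\big(\int|u|^{2^*_s(\beta)}|x'|^{-\beta}+\int|v|^{2^*_s(\beta)}|x'|^{-\beta}\big)^{2/2^*_s(\beta)}$ with $2^*_s(\beta)>2$, the pair infimum equals the smaller of the two one-component constants and tends to be realized by semitrivial configurations, so this point deserves scrutiny in either approach.
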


\begin{proof}
We now verify the conditions of Lemma \ref{lemma5.1}. For any $(u,v)\in X$,
\begin{align*}
  I(u,v)&=\frac{1}{2}{||(u,v)||}^2-\frac{2}{2^{*}_{s}(\alpha)}\int_{\R^n}{\frac{{|uv|}^{ \frac{{2^{*}_{s}}(\alpha)}{2} }}{|x'|^{\alpha}}}dx
-\frac{1}{ 2^{*}_{s}(\beta) }\int_{\R^{n}}{\frac{{|u|}^{ {2^{*}_{s}}(\beta)}+{|v|}^{ {2^{*}_{s}}(\beta)}}{|x'|^{\beta}}}dx\\
  &\geq\frac{1}{2}||(u,v)||^2-C_1||(u,v)||^{2^{*}_{s}(\alpha)}-C_2||(u,v)||^{  2^{*}_{s}(\beta) }.
\end{align*}
Since $s \in(0,1)$ and $0<\alpha,\beta<2s< n $, we have that $2^{*}_{s}(\beta)>2$ and $2^{*}_{s}(\alpha)>2$. Therefore, there exists $r>0$ small enough such that
$$  \mathop {\inf }\limits_{||(u,v)||=r} I(u,v)>0=I(0,0),     $$
so $(1)$ and $(2)$ of Lemma \ref{lemma5.1} are satisfied. From
\begin{align*}
I\big(t(u,v)\big)=\frac{t^2}{2}||(u,v)||^2-\frac{2t^ {2^{*}_{s}(\alpha)}}{2^{*}_{s}(\alpha)}\int_{\R^n}{\frac{{|uv|}^{ \frac{{2^{*}_{s}}(\alpha)}{2} }}{|x'|^{\alpha}}}dx-\frac{t^ {2^{*}_{s}(\beta)}}{ 2^{*}_{s}(\beta) }\int_{\R^{n}}{\frac{{|u|}^{ {2^{*}_{s}}(\beta)}+{|v|}^{ {2^{*}_{s}}(\beta)}}{|x'|^{\beta}}}dx,
\end{align*}
we derive that $\lim_{t \to +\infty} I\big(t(u,v)\big)=-\infty$ for any $ (u,v)\ in ~~X$. Consequently, for any fixed $V_0=(v_0,\tilde{v}_0)\in X$, there exists ${t_{V_0}}>0$ such that $||t_{V_0} V_0||=||t_{V_0} (v_0,\tilde{v}_0)||>r$ and $I(t_{V_0} V_0)<0$. So $(3)$ of Lemma \ref{lemma5.1} is satisfied.

Using (1) and (2) in Proposition \ref{pro1.7}, we obtain a minimizer $U_{\beta}=(u_{\beta},\tilde{u}_{\beta}) \in X$ for $S(n,s,\beta)$ and $V_{\alpha}=(v_{\alpha},\tilde{v}_{\alpha}) \in X$ for $\Lambda(n,s,\alpha)$ respectively. So there exist
$$
V_0:=\left \{ \begin{array}{ll} U_{\beta},~~~~\mbox{if}&  \frac{2s-\beta}{2(n-\beta)} {S(n,s,\beta)}^{\frac{n-\beta}{2s-\beta}} \leq \frac{2s-\alpha}{n-\alpha} \Big[\frac{\Lambda(n,s,\alpha)}{2}\Big]^{\frac{n-\alpha}{2s-\alpha}} ; \\ V_{\alpha} ,~~~~\mbox{if}&  \frac{2s-\beta}{2(n-\beta)} {S(n,s,\beta)}^{\frac{n-\beta}{2s-\beta}}> \frac{2s-\alpha}{n-\alpha} \Big[\frac{\Lambda(n,s,\alpha)}{2}\Big]^{\frac{n-\alpha}{2s-\alpha}}
\end{array}    \right .   $$
and $t_0>0$ such that $||t_0V_0||>r$ and $I(t_0V_0)<0$. We can define
$$  c:=\mathop {\inf }\limits_{g\in \Gamma}   \mathop {\sup }\limits_{t\in [0,1]} I(g(t))$$
where $\Gamma:=\Big \{  g(t)=\big(g_1(t),g_2(t)\big) \in C^0([0,1],X) :g(0)=(0,0), g(1)=t_0V_0   \Big \}$. Clearly, we have $c>0$. For the case of $V_0=U_{\beta}$, we can derive that $$0<c<\frac{2s-\beta}{2(n-\beta)} {S(n,s,\beta)}^{\frac{n-\beta}{2s-\beta}}.$$
In fact, for $U_{\beta}=(u_{\beta},\tilde{u}_{\beta}) \in X$ and every $ t \geq 0$, we have
\begin{align*}
I(tU_{\beta}) \leq f_1(t):=\frac{t^2}{2}||(u_{\beta},\tilde{u}_{\beta})||^2-\frac{t^ {2^{*}_{s}(\beta)}}{ 2^{*}_{s}(\beta) }\int_{\R^{n}}     {\frac{{|u_{\beta}|}^{ {2^{*}_{s}}(\beta)}+{|\tilde{u}_{\beta}|}^{ {2^{*}_{s}}(\beta)}}{|x'|^{\beta}}}dx. 
\end{align*}
Straightforward computations yield that $f_1(t)$ attains its maximum at the point
 $$\tilde{t}=\Big(  \frac{||(u_{\beta},\tilde{u}_{\beta})||^2 }{  \int_{\R^{n}}     {\frac{{|u_{\beta}|}^{ {2^{*}_{s}}(\beta)}+{|\tilde{u}_{\beta}|}^{ {2^{*}_{s}}(\beta)}}{|x'|^{\beta}}} dx  }\Big)^{\frac{1}{ 2^{*}_{s}(\beta)-2 }}$$
and
\begin{align*}
\mathop {\sup }\limits_{t \geq 0} f_1(t)=\frac{2s-\beta}{2(n-\beta)} \Big[  \frac{ ||(u_{\beta},\tilde{u}_{\beta})||^2 }
   { ( \int_{\R^{n}}     {\frac{{|u_{\beta}|}^{ {2^{*}_{s}}(\beta)}+{|\tilde{u}_{\beta}|}^{ {2^{*}_{s}}(\beta)}}{|x'|^{\beta}}} dx )^{\frac{2}{ 2_s^*(\beta)}} }  \Big]^{\frac{n-\beta }{ 2s-\beta }}
=\frac{2s-\beta}{2(n-\beta)} {S(n,s,\beta)}^{\frac{n-\beta}{2s-\beta}}.
\end{align*}
We obtain that,
\begin{equation} \label{eq5.4}
 \mathop {\sup }\limits_{t \geq 0} I({tU_{\beta}}) \leq  \mathop {\sup }\limits_{t \geq 0} f_1(t)=\frac{2s-\beta}{2(n-\beta)} {S(n,s,\beta)}^{\frac{n-\beta}{2s-\beta}}.
\end{equation}
The equality does not hold in (\ref{eq5.4}), otherwise, we would have that $\mathop {\sup }\limits_{t \geq 0} I({tU_{\beta}}) =  \mathop {\sup }\limits_{t \geq 0} f_1(t)$. Let $t_1>0$ where $\mathop {\sup }\limits_{t \geq 0} I(tU_{\beta})$ is attained. We have
$$ f_1(t_1)-\frac{2 t_1^ {2^{*}_{s}(\alpha)}}{2^{*}_{s}(\alpha)}\int_{\R^n}{\frac{{|u_{\beta} \tilde{u}_{\beta}|}^{ \frac{{2^{*}_{s}}(\alpha)}{2} }}{|x'|^{\alpha}}}dx=f_1(\tilde{t})$$
which means that $f_1(t_1)>f_1(\tilde{t})$ since $t_1>0$. This contradicts the fact that $\tilde{t}$ is the unique maximum point of $f_1(t)$.
Thus
\begin{equation} \label{eq5.5}
 \mathop {\sup }\limits_{t \geq 0} I({tU_{\beta}})<  \mathop {\sup }\limits_{t \geq 0} f_1(t)=\frac{2s-\beta}{2(n-\beta)} {S(n,s,\beta)}^{\frac{n-\beta}{2s-\beta}}.
\end{equation}

For the case of $v_0=V_{\alpha}$, similarly, we can verify
\begin{equation} \label{eq5.6}
 \mathop {\sup }\limits_{t \geq 0} I({tV_{\alpha}}) <\frac{2s-\alpha}{n-\alpha} \Big[\frac{\Lambda(n,s,\alpha)}{2}\Big]^{\frac{n-\alpha}{2s-\alpha}}
\end{equation}
and thus $0<c<\frac{2s-\alpha}{n-\alpha} \Big[\frac{\Lambda(n,s,\alpha)}{2}\Big]^{\frac{n-\alpha}{2s-\alpha}}$.

From (\ref{eq5.5}) and (\ref{eq5.6}), we have
$0<c<c^*:=\min \Big \{ \frac{2s-\beta}{2(n-\beta)} {S(n,s,\beta)}^{\frac{n-\beta}{2s-\beta}}, \frac{2s-\alpha}{n-\alpha} \Big[\frac{\Lambda(n,s,\alpha)}{2}\Big]^{\frac{n-\alpha}{2s-\alpha}} \Big \}$.
Since (1)-(3) of Lemma \ref{lemma5.1} are satisfied, there exists a $(PS)$ sequence $\{(u_k,v_k)\} \subset X$ for $I$ at some $c\in (0,c^*)$, i.e.
\begin{equation*}
\lim_{k \to +\infty}I(u_k,v_k)=c~~\mbox{and}~~ \lim_{k \to +\infty} I'(u_k,v_k)=0~~\mbox{strongly in}~~X'.
\end{equation*}
\end{proof}

\noindent\textbf{Proof of Theorem \ref{th1.1}}

\noindent \textbf{(I)} The case $s \in(0,1)$, $0<\alpha,\beta<2s<n$, $2s<m<n$ and $\gamma_1,\gamma_2<\gamma_{H}$.

Let $\{(u_k,v_k)\}_{k\in \N}$ be a $(PS)$ sequence as in Proposition \ref{prop5.2}, i.e.
$$
I(u_k,v_k) \rightarrow c  ,~~ I'(u_k,v_k) \rightarrow 0 ~~\mbox{strongly in}~~X' ~~\mbox{as}~~ k \rightarrow +\infty.$$
Then, we have
\begin{equation}\label{eq5.7}
  I(u_k,v_k)\!=\!\frac{1}{2}||(u_k,v_k)||^2
\!-\!\frac{2}{2^{*}_{s}(\alpha)}\int_{\R^n}{\frac{{|u_kv_k|}^{ \frac{{2^{*}_{s}}(\alpha)}{2} }}{|x'|^{\alpha}}}dx
\!-\!\frac{1}{ 2^{*}_{s}(\beta) }\int_{\R^{n}}    {\frac{{|u_k|}^{ {2^{*}_{s}}(\beta)}\!+\!{|v_k|}^{ {2^{*}_{s}}(\beta)}}{|x'|^{\beta}}} dx\!\to\!c
\end{equation}
and
\begin{equation}\label{eq5.8}
  \langle I'(u_k,v_k),(u_k,v_k)\rangle_X\!=\!||(u_k,v_k)||^2
\!-\!2\int_{\R^n}{\frac{{|u_kv_k|}^{ \frac{{2^{*}_{s}}(\alpha)}{2} }}{|x'|^{\alpha}}}dx\!-\!\int_{\R^{n}}    {\frac{{|u_k|}^{ {2^{*}_{s}}(\beta)}\!+\!{|v_k|}^{ {2^{*}_{s}}(\beta)}}{|x'|^{\beta}}} dx\!\to\!0.
\end{equation}

From (\ref{eq5.7}) and (\ref{eq5.8}), if $\alpha\geq\beta$, we have
\begin{align*}
c+o(1)||(u_k,v_k)||&=I(u_k,v_k)-\frac{1}{2^{*}_{s}(\alpha)}  {\langle I'(u_k,v_k),(u_k,v_k) \rangle}_X
   \geq  \Big(\frac{1}{2}-\frac{1}{2^{*}_{s}(\alpha)} \Big)||(u_k,v_k)||^2.
\end{align*}
If $\alpha<\beta$, we have
\begin{align*}
c+o(1)||(u_k,v_k)||&=I(u_k,v_k)-\frac{1}{2^{*}_{s}(\beta)}  {\langle I'(u_k,v_k),(u_k,v_k) \rangle}_X  \geq  \Big(\frac{1}{2}-\frac{1}{2^{*}_{s}(\beta)} \Big)||(u_k,v_k)||^2.
\end{align*}
Thus, $\{(u_k,v_k)\}$ is bounded in $X$. From (\ref{eq5.8}), there exists a subsequence, still denoted by $\{(u_k,v_k)\}$, such that $||(u_k,v_k)||^2\!\rightarrow \!b$, $\int_{\R^n}{\frac{{|u_kv_k|}^{ \frac{{2^{*}_{s}}(\alpha)}{2} }}{|x'|^{\alpha}}}dx \!\rightarrow \!d_1 $, $\int_{\R^{n}}    {\frac{{|u_k|}^{ {2^{*}_{s}}(\beta)}\!+\!{|v_k|}^{ {2^{*}_{s}}(\beta)}}{|x'|^{\beta}}} dx\! \rightarrow \!d_2$ and $$b=2d_1+d_2.$$
By the definition of $\Lambda(n,s,\alpha)$ and $S(n,s,\beta)$, we get
\begin{equation*}
  d_1^{\frac{2}{  2^{*}_{s}(\alpha) }}\Lambda(n,s,\alpha) \leq b,\quad d_2^{\frac{2}{  2^{*}_{s}(\beta) }}S(n,s,\beta)\leq b,
\end{equation*}
which gives
$d_1^{\frac{2}{  2^{*}_{s}(\alpha) }}\Lambda(n,s,\alpha)  \leq 2d_1+d_2$ and $d_2^{\frac{2}{  2^{*}_{s}(\beta) }}S(n,s,\beta)\leq 2d_1+d_2$. As a result,
\begin{equation}\label{eq5.9}
  d_1^{\frac{2}{  2^{*}_{s}(\alpha) }}\Big(\Lambda(n,s,\alpha)-  2d_1^{\frac{2^{*}_{s}(\alpha)-2}{  2^{*}_{s}(\alpha) }}\Big) \leq d_2,~~~~d_2^{\frac{2}{  2^{*}_{s}(\beta) }}\Big(S(n,s,\beta)-  d_2^{\frac{2^{*}_{s}(\beta)-2}{  2^{*}_{s}(\beta) }} \Big)\leq 2d_1.
\end{equation}

We claim that $\Lambda(n,s,\alpha)-  2d_1^{\frac{2^{*}_{s}(\alpha)-2}{  2^{*}_{s}(\alpha) }}>0$ and $S(n,s,\beta)-  d_2^{\frac{2^{*}_{s}(\beta)-2}{  2^{*}_{s}(\beta) }}>0$. In fact, since $c+o(1)||(u_k,v_k)||=I(u_k,v_k)-\frac{1}{2}  {\langle I'(u_k,v_k),(u_k,v_k) \rangle}_X $, we have
\begin{align*}
\Big( 1-\frac{2}{2^{*}_{s}(\alpha)}\Big)
  \int_{\R^n}{\frac{{|u_kv_k|}^{ \frac{{2^{*}_{s}}(\alpha)}{2} }}{|x'|^{\alpha}}}dx+\Big(\frac{1}{ 2}-\frac{1}{ 2^{*}_{s}(\beta) }\Big)\int_{\R^{n}}  {\frac{{|u_k|}^{ {2^{*}_{s}}(\beta)}+{|v_k|}^{ {2^{*}_{s}}(\beta)}}{|x'|^{\beta}}} dx=c+o(1)||(u_k,v_k)||.
\end{align*}
Let $k\to+\infty$, we have
\begin{align} \label{eq5.010}
\Big( 1-\frac{2}{2^{*}_{s}(\alpha)}\Big)
 d_1+\Big(\frac{1}{ 2}-\frac{1}{ 2^{*}_{s}(\beta) }\Big) d_2=c,
\end{align}
which leads to $d_1  \leq  \frac{n-\alpha}{2s-\alpha} c$ and $d_2  \leq  \frac{2(n-\beta)}{2s-\beta} c$. Recall that $0<c<c^*$, we have
$$\Lambda(n,s,\alpha)-  2d_1^{\frac{2^{*}_{s}(\alpha)-2}{  2^{*}_{s}(\alpha) }}\geq A_1>0,~~~~S(n,s,\beta)-  d_2^{\frac{2^{*}_{s}(\beta)-2}{  2^{*}_{s}(\beta) }}\geq A_2>0,$$
where $A_1\!=\!\Lambda(n,s,\alpha)\!-\!2[\frac{n-\alpha}{2s-\alpha} c]^{\frac{2s-\alpha}{n-\alpha}}$ and $A_2\!=\!S(n,s,\beta)\!-\!  [\frac{2(n-\beta)}{2s-\beta} c]^{\frac{2s-\beta}{n-\beta}}$.
Thus (\ref{eq5.9}) imply
\begin{equation*}
  d_1^{\frac{2}{  2^{*}_{s}(\alpha) }}A_1\leq d_2,~~~~d_2^{\frac{2}{  2^{*}_{s}(\beta) }}A_2\leq 2d_1.
\end{equation*}

If $d_1=0$ and $d_2=0$, then (\ref{eq5.010}) implies that $c=0$, a contradiction with $c>0$. Therefore $d_1>0$ and $d_2>0$, we can choose $\varepsilon_0>0$ such that $d_1\geq\varepsilon_0>0$ and $d_2\geq\varepsilon_0>0$, so there exists a $K>0$ such that $k\geq K$ and
 $$\int_{\R^n}{\frac{{|(u_kv_k)(y)|}^{ \frac{{2^{*}_{s}}(\alpha)}{2} }}{|y'|^{\alpha}}}dy >\varepsilon_0/2,~~~~  \int_{\R^{n}}    {\frac{{|u_k(y)|}^{ {2^{*}_{s}}(\beta)}+{|v_k(y)|}^{ {2^{*}_{s}}(\beta)}}{|y'|^{\beta}}} dy>\varepsilon_0/2.$$
Then the embeddings (\ref{eq1.06}) and inequality \eqref{eqa1.6} imply that there exists $C>0$ such that
\begin{align*}
0<C \leq ||(u_kv_k)||_{  L^{1,n-2s+r}(\R^{n},|y'|^{-r}) }
\leq ||u_k||_{  L^{2,n-2s+r}(\R^{n},|y'|^{-r}) }
||v_k||_{  L^{2,n-2s+r}(\R^{n},|y'|^{-r}) }\leq C^{-1},
\end{align*}
where $r\!=\!\frac{2\alpha}{ 2^*_{s}(\alpha) }$. For any $k\!>\! K$, we may find ${\lambda}_k\!>\!0$ and $x_k\!=\!(x'_k,x''_k) \!\in\! \R^{m}\!\times\! \R^{n-m}$ such that
$$  {\lambda}_k^{-2s+r} \int_{B_{{\lambda}_k}(x_k)} \frac{|(u_kv_k)(y)|}{  |y'|^{r} }dy > ||(u_kv_k)||_{  L^{1,n-2s+r}(\R^{n},|y'|^{-r}) } -\frac{C}{2k} \geq C_1>0,$$
where $y=(y',y'')\in \R^{m}\times \R^{n-m}$.

Let $\tilde{u}_k(x)={\lambda}_k^{ \frac{n-2s}{2} }u_k\big({\lambda}_kx', {\lambda}_k(x''-x''_k)\big)$, $\tilde{v}_k(x)={\lambda}_k^{ \frac{n-2s}{2} }v_k\big({\lambda}_kx', {\lambda}_k(x''-x''_k)\big)$ and ${\tilde{x}}_k=\frac{x_k}{{\lambda}_k}
=({\tilde{x}}'_k,{\tilde{x}}''_k)\in \R^{m}\times \R^{n-m}$, then
\begin{equation}\label{eq4.4}
\int_{B_{1}(({\tilde{x}}'_k,\vec{0}))} \frac{|(\tilde{u}_k \tilde{v}_k)(x)|}{  |x'|^{r} }dx  \geq C_1>0,
\end{equation}
where $x=(x',x'')\in \R^{m}\times \R^{n-m}$ and $\vec{0}=(0,\cdots,0)\in \R^{n-m}$.

Since $||(\tilde{u}_k,\tilde{v}_k)||^2=||(u_k,v_k)||^2\leq C$, there exists $(\tilde{u},\tilde{v})\in X$ such that
$(\tilde{u}_k,\tilde{v}_k) \rightharpoonup (\tilde{u},\tilde{v}) ~~\mbox{in} ~~X=\dot{H}^{s}(\R^{n})\times\dot{H}^{s}(\R^{n})$. Similar to the proof of Proposition \ref{pro1.7}-(1), we can prove that $|{\tilde{x}}'_k|\leq C$, so there exists $R>0$ such that
$\int_{B_{R}(0)}  \frac{|(\tilde{u}_k \tilde{v}_k)(x)|}{  |x'|^{r} }dx  \geq C_1>0$. Recall that $\frac{r}{2}=\frac{\alpha}{ 2^*_{s}(\alpha) }<s$, from Lemma \ref{lemma2.3} we have $\int_{B_{R}(0)}  \frac{|(\tilde{u} \tilde{v})(x)|}{  |x'|^{r} }dx  \geq C_1>0$ and thus $\tilde{u}\not\equiv0$, $\tilde{v}\not\equiv0$.

In addition, the fact $||(\tilde{u}_k,\tilde{v}_k)||^2=||(u_k,v_k)||^2\leq C$ implies that $ \{ {|\tilde{u}_k|}^{\frac{{2^{*}_{s}}(\alpha)}{2} -2}\tilde{u}_k {|\tilde{v}_k|}^{\frac{{2^{*}_{s}}(\alpha)}{2} }  \}$ and $ \{ {|\tilde{v}_k|}^{\frac{{2^{*}_{s}}(\alpha)}{2} -2}\tilde{v}_k {|\tilde{u}_k|}^{\frac{{2^{*}_{s}}(\alpha)}{2} }  \}$ are bounded in $L^{     \frac{{2^{*}_{s}}({\alpha})}{{2^{*}_{s}({\alpha})}-1}    }(\R^{n},|x'|^{-{\alpha}})$, therefore,
\begin{align} \label{eq5.14}
 & {|\tilde{u}_k|}^{\frac{{2^{*}_{s}}(\alpha)}{2} -2}\tilde{u}_k {|\tilde{v}_k|}^{\frac{{2^{*}_{s}}(\alpha)}{2} } \rightharpoonup  {|\tilde{u}|}^{\frac{{2^{*}_{s}}(\alpha)}{2} -2}\tilde{u} {|\tilde{v}|}^{\frac{{2^{*}_{s}}(\alpha)}{2} }  ~~\mbox{in} ~~L^{     \frac{{2^{*}_{s}}({\alpha})}{{2^{*}_{s}({\alpha})}-1}    }(\R^{n},|x'|^{-{\alpha}}),
\end{align}
\begin{align} \label{eqa5.14}
 & {|\tilde{v}_k|}^{\frac{{2^{*}_{s}}(\alpha)}{2} -2}\tilde{v}_k {|\tilde{u}_k|}^{\frac{{2^{*}_{s}}(\alpha)}{2} } \rightharpoonup  {|\tilde{v}|}^{\frac{{2^{*}_{s}}(\alpha)}{2} -2}\tilde{v} {|\tilde{u}|}^{\frac{{2^{*}_{s}}(\alpha)}{2} }  ~~\mbox{in} ~~L^{     \frac{{2^{*}_{s}}({\alpha})}{{2^{*}_{s}({\alpha})}-1}    }(\R^{n},|x'|^{-{\alpha}}).
\end{align}
Similarly, we have
\begin{align} \label{eq5.15}
{|\tilde{u}_k|}^{{2^{*}_{s}}(\beta)-2}\tilde{u}_k \rightharpoonup  {|\tilde{u}|}^{{2^{*}_{s}}(\beta)-2}\tilde{u}  ~~\mbox{in} ~~L^{     \frac{{2^{*}_{s}}({\beta})}{{2^{*}_{s}({\beta})}-1}    }(\R^{n},|x'|^{-{\beta}}),
\end{align}
\begin{align} \label{eqa5.15}
{|\tilde{v}_k|}^{{2^{*}_{s}}(\beta)-2}\tilde{v}_k \rightharpoonup  {|\tilde{v}|}^{{2^{*}_{s}}(\beta)-2}\tilde{v}  ~~\mbox{in} ~~L^{     \frac{{2^{*}_{s}}({\beta})}{{2^{*}_{s}({\beta})}-1}    }(\R^{n},|x'|^{-{\beta}}).
\end{align}

Finally, we check that $\{(\tilde{u}_k,\tilde{v}_k)\}_{k\in \N}$ is also a $(PS)$ sequence for $I$ at level $c$. As the norms in ${\dot{H}}^s(\R^{n})$ and $L^{{2^{*}_{s}}(\alpha)}(\R^{n},|x'|^{-\alpha}) $ are invariant under the dilation and translation given by $\tilde{u}_k(x)={\lambda}_k^{ \frac{n-2s}{2} }u_k\big({\lambda}_kx', {\lambda}_k(x''-x''_k)\big)$ and $\tilde{v}_k(x)={\lambda}_k^{ \frac{n-2s}{2} }v_k\big({\lambda}_kx', {\lambda}_k(x''-x''_k)\big)$, we have
\begin{equation*}
 \lim_{k \to +\infty} I(\tilde{u}_k,\tilde{v}_k)=\lim_{k \to +\infty} I({u}_k,{v}_k)=c.
\end{equation*}
For any $(\phi,\psi)\! \in\! X$, we have $\big(\phi_k(x),\psi_k(x)\big)\!=\!\big(\lambda_k^{ \frac{2s-n}{2} }\phi(\frac{x'}{{\lambda}_k},\frac{x''}{{\lambda}_k}+x''_k),\lambda_k^{ \frac{2s-n}{2} }\psi(\frac{x'}{{\lambda}_k},\frac{x''}{{\lambda}_k}+x''_k)\big)\in X$, where $x=(x',x'')\in \R^{m}\times \R^{n-m}$. From $I'(u_k,v_k) \to 0 ~~\mbox{in}~~X'$, we can derive that
\begin{equation*}
 \lim_{k \to +\infty} {\langle I'(\tilde{u}_k,\tilde{v}_k),({\phi},{\psi}) \rangle}_X=\lim_{k \to +\infty} {\langle I'(u_k,v_k),({\phi}_k,{\psi}_k) \rangle}_X=0.
\end{equation*}
Thus, (\ref{eq5.14})-(\ref{eqa5.15}) lead to ${\langle I'(\tilde{u},\tilde{v}),({\phi},{\psi}) \rangle}_X\!=\!\lim_{k \to +\infty} {\langle I'(\tilde{u}_k,\tilde{v}_k),({\phi},{\psi}) \rangle}_X\!=\!0$. Since $\gamma_1\not=\gamma_2$, we have $\tilde{u}\not\equiv\tilde{v}$.
Hence $(\tilde{u},\tilde{v})$ is a nontrivial weak solution of (\ref{eq1.1}).

\noindent\textbf{(II)} The case $s \in(0,1)$, $\beta=0<\alpha<2s<n$, $2s<m<n$ and $0\leq \gamma_1,\gamma_2<\gamma_{H}$.

Since $\alpha>0$, (\ref{eq1.06}) and \eqref{eqa1.6} are still effective, so we can get a nontrivial weak solution to (\ref{eq1.1}) as above. Notice that $S(n,s,0)$ is attained provided $0\leq \gamma_1,\gamma_2<\gamma_{H}$.   \qed \\

\section{The extension to some general systems with partial weight}
In this Section, we extend the existence results of \eqref{eq1.1} to some general systems.

Problem I: let $\eta_1+\eta_2=2^*_{s}(\alpha)$ satisfy $1<\eta_1<\eta_2<\eta_1+\frac{\alpha}{s}$ and consider the existence of $
(u,v)\in X={\dot{H}}^s(\R^{n})\times{\dot{H}}^s(\R^{n}) $ to
\begin{equation} \label{Sye6.1}
\left\{ \begin{gathered}
  (-\Delta)^{s}u\!-\!{\gamma_1} {\frac{u}{|x'|^{2s}}}\!=\!{\frac{{|u|}^{ {2^{*}_{s}}(\beta)-2}u}{|x'|^{\beta}}}\!+\!\frac{\eta_1}{2^*_{s}(\alpha)}{\frac{{|u|}^{\eta_1 \!-\!2}u {|v|}^{\eta_2}}{|x'|^{\alpha}}}    \hfill \\
  (-\Delta)^{s}v\!-\!{\gamma_2} {\frac{v}{|x'|^{2s}}}\!=\!{\frac{{|v|}^{ {2^{*}_{s}}(\beta)-2}v}{|x'|^{\beta}}}\!+\!\frac{\eta_2}{2^*_{s}(\alpha)}{\frac{{|v|}^{ \eta_2\!-\!2}v {|u|}^{\eta_1 }}{|x'|^{\alpha}}}     \hfill \\
\end{gathered} \right.
\end{equation}
where $s \!\in\!(0,1)$, $0\!<\!\alpha,\beta\!<\!2s\!<\!n$, $0\!<\!m\!<\!n$, $x\!=\!(x',x'') \!\in\! \mathbb{R}^{m} \!\times\! \mathbb{R}^{n-m}$, $\gamma_1,\gamma_2\!<\!\gamma_{H}$, ${2^{*}_{s}}(\alpha)\!=\!\frac{2(n-\alpha)}{n-2s}$ and $\gamma_{H}$ was defined in Lemma \ref{lemma2.1}. 

Problem II: let $\eta_1+\eta_2=p^*(\alpha)$ satisfy $1<\eta_1<\eta_2<\eta_1+\alpha$ and consider the existence of $
(u,v)\in X={D}^{1,p}(\R^{n})\times{D}^{1,p}(\R^{n}) $ to
\begin{equation} \label{Sye6.2}
\left\{ \begin{gathered}
   -{\Delta}_pu\!-\!{\kappa_1} {\frac{|u|^{p-2}u}{|x'|^{p}}}\!=\!\frac{{|u|}^{p^{*}(\beta)-2}u}{|x'|^{\beta}}\!
   +\!\frac{\eta_1}{p^*(\alpha)}{\frac{{|u|}^{\eta_1 \!-\!2}u {|v|}^{\eta_2}}{|x'|^{\alpha}}}    \hfill \\
  -{\Delta}_pv\!-\!{\kappa_2} {\frac{|v|^{p-2}v}{|x'|^{p}}}\!=\!\frac{{|v|}^{p^{*}(\beta)-2}v}{|x'|^{\beta}}\!
  +\!\frac{\eta_2}{p^*(\alpha)}{\frac{{|v|}^{ \eta_2\!-\!2}v {|u|}^{\eta_1 }}{|x'|^{\alpha}}}     \hfill \\
\end{gathered} \right.
\end{equation}
where $n\geq 2$, $p \in (1,n)$, $0<\alpha,\beta<p<m<n$, $\kappa_1,\kappa_2< \tilde{\kappa}:=(\frac{m-p}{p})^p$, $x\!=\!(x',x'') \in \mathbb{R}^{m} \times \mathbb{R}^{n-m}$, $ p^{*}(\alpha)=\frac{p(n-\alpha)}{n-p}$. Here $\tilde{\kappa}$ was defined in formula (12) of \cite{mBgT}. \\


Then, the following main results hold:
\begin{theorem}\label{th6.1}
Let $\eta_1+\eta_2=2^*_{s}(\alpha)$ satisfy $1<\eta_1<\eta_2<\eta_1+\frac{\alpha}{s}$. Then system (\ref{Sye6.1}) possesses at least a nontrivial weak solution provided either \textbf{(I)} $s \in(0,1)$, $0<\alpha,\beta<2s<m<n$ and $\gamma_1,\gamma_2<\gamma_{H}$ or \textbf{(II)} $s \in(0,1)$, $\beta=0<\alpha<2s<m<n$ and $0\leq \gamma_1,\gamma_2<\gamma_{H}$.\\
\end{theorem}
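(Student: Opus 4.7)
The plan is to redo the mountain-pass argument of Theorem \ref{th1.1}, with the symmetric coupling $|uv|^{2^*_s(\alpha)/2}$ replaced throughout by the asymmetric one $|u|^{\eta_1}|v|^{\eta_2}$. First I would work on the functional
\[
J(u,v) = \tfrac12\|(u,v)\|^2 - \tfrac{1}{2^*_s(\alpha)}\!\int_{\mathbb{R}^n}\!\tfrac{|u|^{\eta_1}|v|^{\eta_2}}{|x'|^\alpha}dx - \tfrac{1}{2^*_s(\beta)}\!\int_{\mathbb{R}^n}\!\tfrac{|u|^{2^*_s(\beta)}+|v|^{2^*_s(\beta)}}{|x'|^\beta}dx
\]
on $X=\dot H^s(\mathbb{R}^n)\times\dot H^s(\mathbb{R}^n)$; its nontrivial critical points are precisely the weak solutions of \eqref{Sye6.1}, and Lemma \ref{lemma2.2} yields the mountain-pass geometry as in Proposition \ref{prop5.2}. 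Note that since $\eta_1\neq\eta_2$ the system is already not invariant under $(u,v)\leftrightarrow(v,u)$, which is why the hypothesis $\gamma_1\neq\gamma_2$ of Theorem \ref{th1.1} can now be dropped.

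The key new ingredient is an extension of Proposition \ref{prop1.4} to the asymmetric product $|u|^{\eta_1}|v|^{\eta_2}$. I would retrace the proof of that proposition, applying Lemma \ref{lemma3.4} with $\tilde s=s$, $\tilde p=2$, $\sigma=2s/\alpha$ and the weight $V(y) = |u(y)|^{\eta_1-\tilde q/2}|v(y)|^{\eta_2-\tilde q/2}/|y'|^\alpha$ for $\tilde q$ in a nonempty interval; the Hölder splitting in the $y'$-variable then extracts the partial weight $|y'|^{-r}$ with $r=2\alpha/2^*_s(\alpha)$ and the product $|u|^{\eta_1/2}|v|^{\eta_2/2}$. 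The constraint $1<\eta_1<\eta_2<\eta_1+\alpha/s$ is exactly the asymmetric analogue of the range of $\tilde q$ in the symmetric case: it keeps both $\eta_i-\tilde q/2>0$ and preserves the local integrability that reappeared in the estimate $R^{-n}\!\int V^\sigma$ in Section 3. The expected outcome is an inequality of the form
\[
\Big(\!\int_{\mathbb{R}^n}\!\tfrac{|u|^{\eta_1}|v|^{\eta_2}}{|y'|^\alpha}dy\Big)^{\!1/2^*_s(\alpha)}\! \le C\,\|u\|_{\dot H^s}^{\theta\eta_1/2^*_s(\alpha)}\|v\|_{\dot H^s}^{\theta\eta_2/2^*_s(\alpha)}\big\||u|^{\eta_1/2}|v|^{\eta_2/2}\big\|_{L^{1,n-2s+r}(\mathbb{R}^n,|y'|^{-r})}^{(1-\theta)/2^*_s(\alpha)}
\]
for $\theta$ in an appropriate interval $(\bar\theta,1)$.

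With this inequality in hand I would solve the analogue of the minimization problem \eqref{eq1.8},
\[
\Lambda_{\eta_1,\eta_2}(n,s,\alpha) := \inf_{(u,v)\in X\setminus\{(0,0)\}} \tfrac{\|(u,v)\|^2}{\big(\int_{\mathbb{R}^n}|u|^{\eta_1}|v|^{\eta_2}/|y'|^\alpha dy\big)^{2/2^*_s(\alpha)}},
\]
by following the proof of Proposition \ref{pro1.7}-(1) verbatim; the scaling arguments and concentration-compactness step based on the extended improved inequality go through without change, and produce a minimizer $(\tilde u,\tilde v)$ with both components nontrivial. Testing the mountain pass path with a dilation of this minimizer (or of a minimizer of $S(n,s,\beta)$), as in Proposition \ref{prop5.2}, gives a $(PS)_c$ sequence at a level $0<c<c^*$ with
\[
c^* := \min\Big\{\tfrac{2s-\beta}{2(n-\beta)}S(n,s,\beta)^{\frac{n-\beta}{2s-\beta}},\; \tfrac{2s-\alpha}{2(n-\alpha)}\Lambda_{\eta_1,\eta_2}(n,s,\alpha)^{\frac{n-\alpha}{2s-\alpha}}\Big\}
\]
(the absence of the $1/2$ inside $\Lambda_{\eta_1,\eta_2}$ and the different overall factor, compared with $c^*$ in Proposition \ref{prop5.2}, come from the coefficient $1/2^*_s(\alpha)$ instead of $2/2^*_s(\alpha)$ in front of the coupling term).

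The remaining compactness analysis then mirrors the proof of Theorem \ref{th1.1}: boundedness of the $(PS)$ sequence, a Brezis–Lieb decomposition for the asymmetric product (the analogue of Lemma \ref{LmMa2.5} obtained by applying the Brezis–Lieb lemma to $j(\xi+i\tau):=|\xi|^{\eta_1}|\tau|^{\eta_2}/|x'|^\alpha$), and the strict inequality $c<c^*$ together force $d_1=\lim\!\int|u_k|^{\eta_1}|v_k|^{\eta_2}/|x'|^\alpha$ and $d_2=\lim\!\int(|u_k|^{2^*_s(\beta)}+|v_k|^{2^*_s(\beta)})/|x'|^\beta$ to be bounded below by a positive constant, hence through the extended improved Sobolev inequality also $\||u_k|^{\eta_1/2}|v_k|^{\eta_2/2}\|_{L^{1,n-2s+r}(\mathbb{R}^n,|y'|^{-r})}$ is bounded below. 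A dilation and partial translation, as in Section 5, then extracts a weak limit $(\tilde u,\tilde v)$ of the rescaled sequence with both $\tilde u\not\equiv 0$ and $\tilde v\not\equiv 0$, which must be a weak solution of \eqref{Sye6.1}. The main obstacle will be the extension of Proposition \ref{prop1.4}: one must verify that the asymmetric weight $V$ satisfies condition \eqref{eq3.3} with a constant genuinely controlled by the partial weighted Morrey norm of $|u|^{\eta_1/2}|v|^{\eta_2/2}$, and the precise role of the hypothesis $\eta_2<\eta_1+\alpha/s$ in delimiting the admissible range of $\tilde q$ (analogous to $\max\{2,2^*_s(\alpha)-\alpha/s,\ldots\}<\tilde q<2^*_s(\alpha)$ in the symmetric case) has to be made explicit in the Hölder interpolation.
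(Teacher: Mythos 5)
Your overall strategy is exactly the paper's: the proof of Theorem \ref{th6.1} consists of extending the improved Sobolev inequality of Proposition \ref{prop1.4} to the asymmetric product $|u|^{\eta_1}|v|^{\eta_2}$ (this is Proposition \ref{prop6.1} in the paper) and then rerunning the minimization and mountain-pass machinery of Sections 4--5 verbatim. You also correctly identified that Lemma \ref{lemma3.4} is applied with the weight $V(y)=|u|^{\eta_1-\tilde q/2}|v|^{\eta_2-\tilde q/2}/|y'|^{\alpha}$ and that the hypothesis $\eta_2<\eta_1+\frac{\alpha}{s}$ is what delimits the admissible range of $\tilde q$.

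However, the precise inequality you conjecture as the ``expected outcome'' is not the right one, and the discrepancy is not cosmetic. Your right-hand side carries the factor $\big\||u|^{\eta_1/2}|v|^{\eta_2/2}\big\|_{L^{1,n-2s+r}(\R^{n},|y'|^{-r})}$. Under the dilation $u\mapsto\lambda^{\frac{n-2s}{2}}u(\lambda\cdot)$ the quantity $R^{-2s+r}\int_{B_R}|u|^{\eta_1/2}|v|^{\eta_2/2}|y'|^{-r}dy$ scales like $\lambda^{\frac{n-\alpha}{2}-(n-2s)}\neq\lambda^{0}$ (it is only invariant when $\eta_1+\eta_2=4$, not when $\eta_1+\eta_2=2^*_s(\alpha)$), whereas the left-hand side and the $\dot H^s$ norms are scale invariant; so your inequality fails by letting $\lambda\to0$ or $\lambda\to\infty$. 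Moreover, even if one repaired the Morrey index, the later compactness step needs a two-sided a priori bound on the Morrey quantity, and the only way the paper gets the upper bound is via $\|uv\|_{L^{1,n-2s+r}(|y'|^{-r})}\leq\|u\|_{L^{2,n-2s+r}(|y'|^{-r})}\|v\|_{L^{2,n-2s+r}(|y'|^{-r})}$ together with the embedding \eqref{eq1.06}; this forces the Morrey slot to contain the plain product $uv$. What the paper actually proves (Proposition \ref{prop6.1}) is
\begin{equation*}
\Big(\int_{\R^{n}}\frac{|u|^{\eta_1}|v|^{\eta_2}}{|y'|^{\alpha}}dy\Big)^{\frac{1}{2^*_s(\alpha)}}\leq C\,\|u\|_{\dot H^{s}(\R^{n})}^{\frac{\theta}{2}}\,\|v\|_{\dot H^{s}(\R^{n})}^{\frac{\theta}{2}+\frac{\eta_2-\eta_1}{2^*_s(\alpha)}}\,\|uv\|_{L^{1,n-2s+r}(\R^{n},|y'|^{-r})}^{\frac{\eta_1}{2^*_s(\alpha)}-\frac{\theta}{2}},\qquad \theta\in\Big(\bar\theta,\tfrac{2\eta_1}{2^*_s(\alpha)}\Big),
\end{equation*}
i.e.\ the asymmetry is absorbed into an \emph{extra power of} $\|v\|_{\dot H^s}$, not into the Morrey factor. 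The mechanism is a preliminary H\"older splitting of $V^{\sigma}$ with exponents $\frac1t,\frac{1}{1-t}$, $t=1-\frac{(\eta_2-\eta_1)s}{\alpha}$, which isolates a factor $\big(\int_{B_R}|v|^{2}|y'|^{-2s}dy\big)^{1-t}$ controlled by the fractional Hardy inequality (Lemma \ref{lemma2.1}); the condition $\eta_2<\eta_1+\frac{\alpha}{s}$ is precisely $t>0$. With the inequality in this form every subsequent step (Proposition \ref{pro1.7}, Proposition \ref{prop5.2}, and the concentration argument of Section 5) goes through as you describe, including dropping $\gamma_1\neq\gamma_2$. So the missing idea in your proposal is this Hardy-term splitting; without it you cannot land on a scale-invariant inequality whose Morrey factor is controllable from $\dot H^s$.
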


\begin{theorem}\label{th6.2}
Let $\eta_1+\eta_2=p^*(\alpha)$ satisfy $1<\eta_1<\eta_2<\eta_1+\alpha$. Then system (\ref{Sye6.2}) possesses at least a nontrivial weak solution $(u,v)$ provided either \textbf{(I)} $n\geq 2$, $p \in [2,n)$, $0<\alpha,\beta<p<m<n$ and $\kappa_1,\kappa_2 < \tilde{\kappa}$ or \textbf{(II)} $n\geq 2$, $p \in [2,n)$, $\beta=0<\alpha<p<m<n$ and $0\leq\kappa_1,\kappa_2< \tilde{\kappa}$. Moreover, we have $u,v\in  {D}^{1,p}(\R^{n})\cap C^{1}(\mathbb{R}^{n} \setminus\{0\})$.\\
\end{theorem}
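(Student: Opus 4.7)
The strategy closely parallels that of Theorem \ref{th1.1}, with $\dot{H}^s(\R^n)$ replaced by $D^{1,p}(\R^n)$ and Proposition \ref{prop1.4} replaced by Corollary \ref{coro1.5}. First I would work on $X=D^{1,p}(\R^{n})\times D^{1,p}(\R^{n})$ equipped with the equivalent norms
$$\|u\|_{\kappa_i}^p=\int_{\R^n}|\nabla u|^p\,dx-\kappa_i\int_{\R^n}\frac{|u|^p}{|x'|^p}\,dx,\qquad i=1,2,$$
which are equivalent to $\|\cdot\|_{D^{1,p}}$ via the partial-weight $p$-Hardy inequality from formula (12) of \cite{mBgT} together with $\kappa_i<\tilde{\kappa}$. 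The associated energy
$$J(u,v)=\tfrac{1}{p}\bigl(\|u\|_{\kappa_1}^p+\|v\|_{\kappa_2}^p\bigr)-\tfrac{1}{p^*(\alpha)}\int_{\R^n}\frac{|u|^{\eta_1}|v|^{\eta_2}}{|x'|^\alpha}\,dx-\tfrac{1}{p^*(\beta)}\int_{\R^n}\frac{|u|^{p^*(\beta)}+|v|^{p^*(\beta)}}{|x'|^\beta}\,dx$$
lies in $C^1(X,\R)$ by a Hardy-Sobolev inequality with partial weight for the $p$-Laplacian (the analogue of Lemma \ref{lemma2.2}, cf.\ \cite{mBgT,XcTy}), and since $\eta_1+\eta_2=p^*(\alpha)>p$ and $p^*(\beta)>p$ the mountain pass geometry is immediate.

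Next I would set up the two minimization problems
$$\Lambda_p=\!\!\inf_{(u,v)\in X\setminus\{(0,0)\}}\!\!\frac{\|u\|_{\kappa_1}^p+\|v\|_{\kappa_2}^p}{\bigl(\int_{\R^n}|u|^{\eta_1}|v|^{\eta_2}|x'|^{-\alpha}dx\bigr)^{p/p^*(\alpha)}},\qquad S_p=\!\!\inf_{(u,v)\in X\setminus\{(0,0)\}}\!\!\frac{\|u\|_{\kappa_1}^p+\|v\|_{\kappa_2}^p}{\bigl(\int_{\R^n}\!\bigl(|u|^{p^*(\beta)}+|v|^{p^*(\beta)}\bigr)|x'|^{-\beta}dx\bigr)^{p/p^*(\beta)}},$$
which are attained in $X\setminus\{(0,0)\}$ exactly as in Proposition \ref{pro1.7}, thanks to the embeddings $D^{1,p}(\R^n)\hookrightarrow L^{p^*(\alpha)}(\R^n,|y'|^{-\alpha})\hookrightarrow L^{p/2,n-p+r}(\R^n,|y'|^{-r})$ with $r=p\alpha/p^*(\alpha)$ and the improved Sobolev inequality of Corollary \ref{coro1.5}; note that $p<m$ is precisely what keeps the admissible $\theta$-interval non-empty. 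Using a minimizer of whichever of $\Lambda_p,S_p$ produces the smaller energy as the endpoint of the mountain pass path, a direct computation analogous to Proposition \ref{prop5.2} shows that the mountain pass level $c$ satisfies $0<c<c^*:=\min\bigl\{\tfrac{p-\beta}{p(n-\beta)}S_p^{(n-\beta)/(p-\beta)},\,\tfrac{p-\alpha}{p(n-\alpha)}\Lambda_p^{(n-\alpha)/(p-\alpha)}\bigr\}$. For the associated $(PS)_c$ sequence $\{(u_k,v_k)\}$, setting $d_1=\lim\int|u_k|^{\eta_1}|v_k|^{\eta_2}|x'|^{-\alpha}dx$ and $d_2=\lim\int(|u_k|^{p^*(\beta)}+|v_k|^{p^*(\beta)})|x'|^{-\beta}dx$, the strict inequality $c<c^*$ forces $d_1,d_2\geq \varepsilon_0>0$, whence Corollary \ref{coro1.5} yields a uniform lower bound on the partial-weighted Morrey norm $\|u_kv_k\|_{L^{p/2,n-p+r}(\R^n,|y'|^{-r})}$. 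Concentration scales $\lambda_k>0$ and partial translations $x_k''\in\R^{n-m}$ are then chosen as in the proof of Theorem \ref{th1.1}; the rescaled sequence $\tilde u_k(x)=\lambda_k^{(n-p)/p}u_k(\lambda_kx',\lambda_k(x''-x_k''))$ (and analogously $\tilde v_k$) is a new $(PS)_c$ sequence, and boundedness of $\tilde x_k'=x_k'/\lambda_k$ combined with a local compact embedding $D^{1,p}\hookrightarrow L^{p/2}_{loc}(\R^n,|x'|^{-r})$ (the analogue of Lemma \ref{lemma2.3}, valid since $r/2<p$) promotes the Morrey bound to the weak limit, giving a nontrivial weak limit $(\tilde u,\tilde v)$; the hypothesis $\kappa_1\neq\kappa_2$ rules out $\tilde u\equiv\tilde v$.

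The \textbf{main obstacle}, and the real departure from the proof of Theorem \ref{th1.1}, arises when passing $J'(\tilde u_k,\tilde v_k)\to 0$ to the weak limit: unlike the fractional Laplacian, the $p$-Laplacian is nonlinear, so the weak convergence $\nabla\tilde u_k\rightharpoonup\nabla\tilde u$ in $L^p$ is not sufficient to identify the principal part. I would employ the monotonicity (i.e.\ the $S_+$-property) of $-\Delta_p$: test the difference $J'(\tilde u_k,\tilde v_k)-J'(\tilde u,\tilde v)$ against $(\tilde u_k-\tilde u,\tilde v_k-\tilde v)$, combine with a Brezis-Lieb splitting for the coupled critical term (the analogue of Lemma \ref{LmMa2.5} obtained by taking $j(s+it)=|s|^{\eta_1}|t|^{\eta_2}/|x'|^\alpha$) and the standard Brezis-Lieb lemma for the $p^*(\beta)$-term, and extract the a.e.\ convergence of $\nabla\tilde u_k,\nabla\tilde v_k$; once this is in hand, the nonlinear right-hand sides converge in $X'$ by dominated convergence and $(\tilde u,\tilde v)$ is identified as a nontrivial weak solution of \eqref{Sye6.2}. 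Case (II) is handled identically, the only modification being that $S_p(n,p,0)$ is attained via symmetric decreasing rearrangement (cf.\ \cite{RFPP,SDLM,mBgT,GmKs}), which requires the extra sign hypothesis $\kappa_i\geq 0$. Finally, the regularity $u,v\in C^1(\R^n\setminus\{0\})$ is obtained by a standard bootstrap: on compact subsets of $\R^n\setminus\{x'=0\}$ the right-hand side of \eqref{Sye6.2} is bounded, a Moser iteration yields $L^\infty_{loc}$-bounds, and the classical DiBenedetto-Tolksdorf $C^{1,\delta}$-theory for quasilinear equations with $p$-growth delivers the claimed $C^1$-regularity, with the lower-dimensional residual singular set treated through interior estimates for $p$-Laplace equations with singular weights.
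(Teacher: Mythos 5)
Your proposal takes the same route as the paper, whose own proof of Theorem \ref{th6.2} consists of the single instruction to imitate the proof of Theorem \ref{th1.1} with the improved Sobolev inequality replaced by its $p$-Laplacian analogue and to cite \cite{RFPP} for the $C^1$-regularity; your sketch is a faithful and much more detailed expansion of that, and your explicit handling of the quasilinear limit passage (a.e.\ convergence of gradients via monotonicity of $-\Delta_p$, Brezis--Lieb splittings for the critical terms) addresses a genuine point that the paper leaves entirely to the reference. Two corrections are needed, however. First, you repeatedly invoke Corollary \ref{coro1.5}, which controls $\int_{\R^n}|uv|^{p^*(\alpha)/2}|y'|^{-\alpha}dy$; but in system (\ref{Sye6.2}) the coupling is $|u|^{\eta_1}|v|^{\eta_2}$ with $\eta_1<\eta_2$, so the lower bound $d_1>0$ on $\lim_k\int_{\R^n}|u_k|^{\eta_1}|v_k|^{\eta_2}|x'|^{-\alpha}dx$ yields nothing through Corollary \ref{coro1.5}: you must use the asymmetric inequality of Corollary \ref{coRo6.2}, which is exactly what the paper prescribes and whose right-hand side $\|u\|^{\theta/2}\,\|v\|^{\theta/2+(\eta_2-\eta_1)/p^*(\alpha)}\,\|uv\|^{\eta_1/p^*(\alpha)-\theta/2}_{L^{p/2,n-p+r}(\R^n,|y'|^{-r})}$ is what your Morrey-norm argument actually requires. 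Second, Theorem \ref{th6.2} does not assume $\kappa_1\neq\kappa_2$; the case $\tilde u\equiv\tilde v=:w$ is excluded here by the asymmetry $\eta_1<\eta_2$ of the coupling (subtracting the two equations forces either $w\equiv 0$ or $|w|$ to coincide with a fixed power of $|x'|$, neither of which is an admissible nontrivial element of $D^{1,p}(\R^n)$), not by distinct Hardy coefficients. With these substitutions your outline matches the intended proof; your threshold constants $c^*$ are consistent with the normalization of the energy for (\ref{Sye6.2}).
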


Before proving Theorems \ref{th6.1}-\ref{th6.2}, we set up the following improved Sobolev inequalities:
\begin{proposition}\label{prop6.1}
Let $s \!\in\! (0,1)$, $0\!<\!\alpha\!<\!2s\!<\!n$, $2s\!<\!m\!<\!n$, $\eta_1\!+\!\eta_2\!=\!2^*_{s}(\alpha)$ satisfy $1\!<\!\eta_1\!<\!\eta_2\!<\!\eta_1+\frac{\alpha}{s}$. There exists $C\!=\!C(n,m,s,\alpha,\eta_1,\eta_2)\!>\!0$ such that for any $u,v \!\in\! {\dot{H}}^s(\R^{n})$ and for any $\theta \!\in\! (\bar{\theta},\frac{2\eta_1}{2^*_{s}(\alpha)})$, it holds that
\begin{equation*}  
 \Big( \int_{ \R^{n} }  \frac{ |u(y)|^{\eta_1} |v(y)|^{\eta_2} }  {  |y'|^{\alpha} } dy  \Big)^{ \frac{1}{  2^*_{s} (\alpha)  }}  \leq C ||u||_{{\dot{H}}^s(\R^{n})}^{\frac{\theta}{2}}
 ||v||_{{\dot{H}}^s(\R^{n})}^{\frac{\theta}{2}+\frac{\eta_2-\eta_1}{2^*_{s} (\alpha)}} ||(uv)||^{\frac{\eta_1}{2^*_{s} (\alpha)}-\frac{\theta}{2}}_{  L^{1,n-2s+r}(\R^{n},|y'|^{-r}) },
\end{equation*}
where $y\!=\!(y',y'') \in \mathbb{R}^{m} \times \mathbb{R}^{n-m}$,  $\bar{\theta}=\max \Big\{ \frac{2}{2^*_{s}(\alpha)},\frac{2\eta_1-\frac{t\alpha}{s}}{2^*_{s}(\alpha)}, \frac{2\eta_1}{2^*_{s}(\alpha)}
-\frac{2t(\frac{\alpha}{2s}-\frac{\alpha}{m})}{2^*_{s}(\alpha)
-\frac{2\alpha}{m}},
\frac{2\eta_1}{2^*_{s}(\alpha)}-t \Big\}$, $t=1-\frac{(\eta_2-\eta_1)s}{\alpha}$ and $r=\frac{2\alpha}{ 2^*_{s}(\alpha) }$.
\end{proposition}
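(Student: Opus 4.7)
The plan is to mimic the proof of Proposition \ref{prop1.4}, with a Cauchy--Schwarz step to separate the asymmetric powers of $u,v$ and an enlarged Hölder splitting inside Lemma \ref{lemma3.4} to accommodate the gap $\eta_2-\eta_1$. First I would write $u=\ell_s g$ and $v=\ell_s f$ so that $\|g\|_{L^2}=\|u\|_{\dot H^s}$ and $\|f\|_{L^2}=\|v\|_{\dot H^s}$. Set $\tilde q=\theta\cdot 2^*_s(\alpha)$ (so the upper bound $\theta<2\eta_1/2^*_s(\alpha)$ becomes $\tilde q<2\eta_1$) and define the weight $V(y)=|u(y)|^{\eta_1-\tilde q/2}|v(y)|^{\eta_2-\tilde q/2}/|y'|^{\alpha}$, which is nonnegative precisely because $\tilde q\leq 2\eta_1$. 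Cauchy--Schwarz then gives
\begin{equation*}
\int_{\R^n}\frac{|u|^{\eta_1}|v|^{\eta_2}}{|y'|^{\alpha}}dy
=\int_{\R^n} V\,|u|^{\tilde q/2}|v|^{\tilde q/2}\,dy
\leq\Big(\int V|u|^{\tilde q}dy\Big)^{\!1/2}\Big(\int V|v|^{\tilde q}dy\Big)^{\!1/2},
\end{equation*}
and Lemma \ref{lemma3.4}, applied with $\tilde s=s$, $\tilde p=2$, $W\equiv 1$, $\sigma=2s/\alpha>1$, bounds each factor by $(CC_\sigma)^{\tilde q/2}\|u\|_{\dot H^s}^{\tilde q/2}$ and $(CC_\sigma)^{\tilde q/2}\|v\|_{\dot H^s}^{\tilde q/2}$ respectively.

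The key technical step is estimating $C_\sigma=\sup_{R,x}R^{s+n/\tilde q-n/2}(R^{-n}\int_{B_R(x)}V^\sigma)^{1/(\tilde q\sigma)}$. Since $\sigma\alpha=2s$, we have $V^\sigma=|u|^{a}|v|^{b}/|y'|^{2s}$ with $a=(2\eta_1-\tilde q)s/\alpha$ and $b=(2\eta_2-\tilde q)s/\alpha$; I would rewrite this asymmetrically as $V^\sigma=|uv|^{a}\,|v|^{b-a}/|y'|^{2s}$ where $b-a=2(\eta_2-\eta_1)s/\alpha=2(1-t)$. Next I split the weight $|y'|^{-2s}=|y'|^{-2s\tau_1}|y'|^{-2s\tau_2}|y'|^{-2s\tau_3}$ with $\tau_1+\tau_2+\tau_3=1$ and apply a three-factor Hölder inequality with exponents $1/\mu_i$ summing to $1$, choosing $\mu_1=a$, $\tau_1=(2\eta_1-\tilde q)/2^*_s(\alpha)$ (this makes the first factor the partial-weighted Morrey quantity, producing $R^{(2s-r)\mu_1}\|uv\|_{L^{1,n-2s+r}(\R^n,|y'|^{-r})}^{\mu_1}$ with $r=2\alpha/2^*_s(\alpha)$); $\mu_2=1-t$, $\tau_2=1-t$ (this pushes the second factor to the fractional Hardy limit, so Lemma \ref{lemma2.1} yields $(\|v\|_{\dot H^s}^2/\gamma_H)^{1-t}$); and $\mu_3=t-a$, $\tau_3=t-(2\eta_1-\tilde q)/2^*_s(\alpha)$, leaving a pure weight factor $(\int_{B_R(x)}|y'|^{-2s\tau_3/\mu_3}dy)^{\mu_3}$ that scales like $R^{(n-2s\tau_3/\mu_3)\mu_3}$ provided $2s\tau_3/\mu_3<m$. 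Checking scaling invariance, all powers of $R$ cancel in the supremum defining $C_\sigma$, and one obtains $C_\sigma\leq C\|v\|_{\dot H^s}^{2(1-t)/\tilde q}\|uv\|_{L^{1,n-2s+r}}^{a/\tilde q}$. Inserting this into the Cauchy--Schwarz bound and extracting the $1/2^*_s(\alpha)$ root yields precisely the stated inequality, with the four thresholds in $\bar\theta$ arising transparently: $\theta>2/2^*_s(\alpha)$ is the Lemma \ref{lemma3.4} requirement $\tilde q>2$; $\theta>(2\eta_1-t\alpha/s)/2^*_s(\alpha)$ is $\mu_3>0$ (so the Hölder exponent $1/\mu_3$ is finite); $\theta>2\eta_1/2^*_s(\alpha)-2t(\alpha/(2s)-\alpha/m)/(2^*_s(\alpha)-2\alpha/m)$ is the local-integrability condition $2s\tau_3/\mu_3<m$; and $\theta>2\eta_1/2^*_s(\alpha)-t$ is the nonnegativity $\tau_3\geq 0$ of the leftover weight exponent.

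The main obstacle will be bookkeeping: the three-factor Hölder splitting introduces two genuine parameters ($\mu$'s and $\tau$'s, each with a constraint), and one must verify that the $R$-exponents produced by the Morrey factor, the Hardy factor, and the weight factor combine with $R^{s+n/\tilde q-n/2}$ to give exactly $R^0$, so that $C_\sigma$ is truly finite and independent of the ball $B_R(x)$. Equivalent choices of $\alpha'\in[0,2s]$ for the second factor (replacing the Hardy limit by a Hardy--Sobolev with subcritical weight) give a family of valid estimates, but $\alpha'=2s$ appears to be what yields the stated form of $\bar\theta$; verifying that the four listed conditions are simultaneously necessary and sufficient, and match exactly (rather than merely up to a weaker substitute), is the subtle step. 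The reduction to Proposition \ref{prop1.4} when $\eta_1=\eta_2=2^*_s(\alpha)/2$ (so $t=1$, $\mu_2=\tau_2=0$, and the three-factor Hölder collapses to the two-factor one) provides a useful consistency check throughout.
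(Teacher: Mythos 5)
Your proposal is correct and follows essentially the same strategy as the paper: apply Cauchy--Schwarz and Lemma \ref{lemma3.4} with the same choices of $\tilde s=s$, $\tilde p=2$, $\sigma=2s/\alpha$, $W\equiv 1$, $V(y)=|u|^{\eta_1-\tilde q/2}|v|^{\eta_2-\tilde q/2}/|y'|^\alpha$, then verify the hypothesis \eqref{eq3.3} by H\"older splittings of the weight in $V^\sigma$. The only difference is cosmetic: the paper verifies \eqref{eq3.3} via two nested two-factor H\"older inequalities (first with exponents $1/t,\,1/(1-t)$ to peel off the Hardy factor $\int_{B_R}|v|^2/|y'|^{2s}$, then a further split of the remaining $|uv|$ term into a pure-weight integral and the Morrey quantity, controlled by the auxiliary parameter $\rho$), whereas you collapse both steps into a single three-factor H\"older with exponents $\mu_1=a$, $\mu_2=1-t$, $\mu_3=t-a$ --- algebraically equivalent, reproducing the same four thresholds in $\bar\theta$.
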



\begin{corollary}\label{coRo6.2}
Let $n\!\geq\!2$, $2\!\leq\! p\!<\!n$, $0\!<\!\alpha\!<\!p\!<\!m\!<\!n$, $\eta_1\!+\!\eta_2\!=\!p^*(\alpha)\!=\!\frac{p(n-\alpha)}{n-p}$ satisfy $1\!<\!\eta_1\!<\!\eta_2\!<\!\eta_1\!+\!\alpha$. There exists $C\!=\!C(n,m,p,\alpha,\eta_1,\eta_2)\!>\!0$ such that for any $u,v \in {D}^{1,p}(\R^{n})$ and for any $\theta \!\in\! (\bar{\theta},\frac{2\eta_1}{p^*(\alpha)})$, it holds that
\begin{equation*}  
  \Big( \int_{ \R^{n} }  \frac{ |u(y)|^{\eta_1} |v(y)|^{\eta_2} }  {  |y'|^{\alpha} } dy  \Big)^{ \frac{1}{  p^* (\alpha)  }}  \leq C ||u||_{{D}^{1,p}(\R^{n})}^{\frac{\theta}{2}}
 ||v||_{{D}^{1,p}(\R^{n})}^{\frac{\theta}{2}+\frac{\eta_2-\eta_1}{p^* (\alpha)}} ||(uv)||^{\frac{\eta_1}{p^*(\alpha)}-\frac{\theta}{2}}_{  L^{{p}/{2},n-p+r}(\R^{n},|y'|^{-r}) },
\end{equation*}
where $y\!=\!(y',y'') \in \mathbb{R}^{m} \times \mathbb{R}^{n-m}$, $\bar{\theta}=\max \Big\{ \frac{p}{p^*(\alpha)}, \frac{2\eta_1-t\alpha}{p^*(\alpha)}, \frac{2\eta_1}{p^*(\alpha)}-\frac{t\alpha(1-\frac{p}{n})}{p^*(\alpha)
-\frac{p\alpha}{n}},
\frac{2\eta_1}{p^*(\alpha)}-t \Big\}$, $t=1-\frac{\eta_2-\eta_1}{\alpha}$ and $r=\frac{p\alpha}{p^{*}(\alpha)}$.  \\
\end{corollary}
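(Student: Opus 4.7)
The plan is to mirror the derivation of Corollary \ref{coro1.5} from Proposition \ref{prop1.4}, but starting from the asymmetric scheme of Proposition \ref{prop6.1}. The first step is to reduce the $D^{1,p}$ setting to an estimate for the Riesz potential $\ell_1$. Exactly as in the proof of Corollary \ref{coro1.5}, for $n\geq 3$ the fundamental solution of $\Delta$ and integration by parts give the pointwise bound $|u(x)|\leq C\,\ell_1(|\nabla u|)(x)$ for $u\in C_0^\infty(\R^n)$; for $n=2$ the same bound holds via the logarithmic kernel. By density of $C_0^\infty(\R^n)$ in $D^{1,p}(\R^n)$ the bound extends to all $u,v\in D^{1,p}(\R^n)$. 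Setting $g=|\nabla u|$ and $f=|\nabla v|$, we have $\|g\|_{L^p}=\|u\|_{D^{1,p}}$, $\|f\|_{L^p}=\|v\|_{D^{1,p}}$, and $|u|\leq C\ell_1 g$, $|v|\leq C\ell_1 f$ pointwise.

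Next, decouple the asymmetric product via Cauchy-Schwarz. For a parameter $\tilde q$ to be chosen with $p<\tilde q<\min\{p^*(\alpha),2\eta_1\}$, introduce
\begin{equation*}
V(y)=\frac{|u(y)|^{\eta_1-\tilde q/2}\,|v(y)|^{\eta_2-\tilde q/2}}{|y'|^{\alpha}},
\end{equation*}
so that $\int_{\R^n} |u|^{\eta_1}|v|^{\eta_2}/|y'|^\alpha\,dy=\int V|u|^{\tilde q/2}|v|^{\tilde q/2}\,dy\leq \bigl(\int V|u|^{\tilde q}\bigr)^{1/2}\bigl(\int V|v|^{\tilde q}\bigr)^{1/2}$. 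Substituting $|u|^{\tilde q}\leq C(\ell_1 g)^{\tilde q}$ and $|v|^{\tilde q}\leq C(\ell_1 f)^{\tilde q}$ we may apply Lemma \ref{lemma3.4} with $\tilde s=1$, $\tilde p=p$, $W\equiv 1$, $\sigma=p/\alpha>1$ and the present $V$, provided the Sawyer-Wheeden condition \eqref{eq3.3} holds. Accepting this for a moment, Lemma \ref{lemma3.4} then gives $\int V(\ell_1 g)^{\tilde q}\leq (CC_\sigma)^{\tilde q}\|g\|_{L^p}^{\tilde q}$, and similarly for $f$, which is exactly the form needed to close the argument with $\theta:=\tilde q/p^*(\alpha)$.

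The core computation is the verification of \eqref{eq3.3}, and this is where the asymmetric exponents force a three-fold Hölder split. Write
\begin{equation*}
V^{\sigma}=\frac{(|uv|)^{(\eta_1-\tilde q/2)\sigma}\,|v|^{(\eta_2-\eta_1)\sigma}}{|y'|^{\sigma\alpha}},
\end{equation*}
and on any ball $B_R(x)$ decompose $|y'|^{-\sigma\alpha}=|y'|^{-t_1\sigma\alpha}|y'|^{-t_2\sigma\alpha}|y'|^{-t_3\sigma\alpha}$ with $t_1+t_2+t_3=1$, pairing $t_1$ with the constant factor, $t_2$ with $|uv|$, and $t_3$ with $|v|^{(\eta_2-\eta_1)\sigma}$. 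Hölder's inequality with appropriately tuned exponents then produces three factors: (i) an integral of a pure power of $|y'|^{-1}$ over $B_R(x'')\times\{|y'|\leq R\}$, finite precisely when the exponent stays below $m$ (this uses $p<m$); (ii) a weighted $L^1$-integral of $|uv|$ giving the Morrey norm $\|(uv)\|_{L^{p/2,n-p+r}(\R^n,|y'|^{-r})}$ with $r=p\alpha/p^*(\alpha)$; (iii) a weighted $L^{p^*(\alpha)}$-integral of $|v|$, dominated by $\|v\|_{D^{1,p}}$ via the $p$-Laplacian Hardy-Sobolev inequality. The natural choice $t=1-(\eta_2-\eta_1)/\alpha\in(0,1)$ (which is in $(0,1)$ precisely because $\eta_2-\eta_1<\alpha$) sets the splitting so that after taking the $(\tilde q\sigma)^{-1}$-th root the $R$-scaling collapses exactly to $R^{(n-p+r)/\tilde q\cdot \text{(some power)}}$, matching $C_\sigma$ to a power of the Morrey norm times a power of $\|v\|_{D^{1,p}}$.

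The main obstacle is bookkeeping the four simultaneous constraints that define $\bar\theta$: $\tilde q>p$ is needed for Lemma \ref{lemma3.4}; $\tilde q<2\eta_1$ is needed for the Cauchy-Schwarz step to be meaningful (giving the upper endpoint $2\eta_1/p^*(\alpha)$); the integrability of the pure partial-weight piece forces $\tilde q>2\eta_1-t\alpha$ and, through the exponent $\frac{t\sigma\alpha}{1-[p^*(\alpha)-\tilde q]\sigma/2}<m$, the condition $\tilde q>\frac{2\eta_1}{p^*(\alpha)}p^*(\alpha)-\frac{t\alpha p^*(\alpha)(1-p/n)}{p^*(\alpha)-p\alpha/n}$; and the Hardy-Sobolev bound on the $|v|$-factor forces the last threshold $\tilde q>p^*(\alpha)\bigl(\frac{2\eta_1}{p^*(\alpha)}-t\bigr)$. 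Taking the maximum of the four thresholds gives exactly the $\bar\theta$ in the statement. Assembling everything and setting $\theta=\tilde q/p^*(\alpha)$ produces the claimed inequality, with the constant $C=C(n,m,p,\alpha,\eta_1,\eta_2)>0$.
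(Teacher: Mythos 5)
Your proposal follows essentially the same route as the paper: reduce to the Riesz potential via $|u|\leq C\,\ell_1(|\nabla u|)$, decouple by Cauchy--Schwarz, and invoke the Sawyer--Wheeden lemma (Lemma \ref{lemma3.4}) with exactly the paper's choices $\tilde s=1$, $\tilde p=p$, $\sigma=p/\alpha$, $W\equiv1$ and the asymmetric weight $V$, verifying condition \eqref{eq3.3} by a H\"older decomposition that isolates the pure $|y'|$-power, the Morrey norm of $uv$, and a residual $v$-factor. The only (cosmetic) deviation is that you perform one three-way H\"older split and control the residual factor by the partial-weight Hardy--Sobolev inequality, whereas the paper's template (Proposition \ref{prop6.1}) uses two successive two-way splits and bounds $\int_{B_R(x)}|v|^p|y'|^{-p}\,dy$ by the partial-weight Hardy inequality; both yield the same exponent $\frac{\eta_2-\eta_1}{p^*(\alpha)}$ on $\|v\|_{D^{1,p}}$ and the same thresholds.
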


\noindent\textbf{Proof of  Proposition \ref{prop6.1}}

For $u \in {\dot{H}}^s(\R^{n})$, we have $\hat{g}(\xi):=\!|\xi|^s\hat{u}(\xi) \!\in\! L^2(\R^{n})$ and
$||u||_{{\dot{H}}^s(\R^{n})}\!=\!||g||_{L^2(\R^{n})}$  by Plancherel's theorem. Thus, $u(x)\!=\!(\frac{1}{|\xi|^s})^{\vee}*g(x)\!=\!{\ell}_sg(x)$, where ${\ell}_sg(x)\!=\!\int_{{\R}^n} \frac{g(z)}{|x-z|^{n-{s}}}dz$. Similarly, let $v \!\in\! {\dot{H}}^s(\R^{n})$ and $\hat{f}(\xi):=\!|\xi|^s\hat{v}(\xi) \!\in\! L^2(\R^{n})$, we have $v(x)\!=\!(\frac{1}{|\xi|^s})^{\vee}*f(x)\!=\!{\ell}_sf(x)$.

Firstly, let $2s<m<n$, $\eta_1+\eta_2=2^*_{s}(\alpha)$ with $1<\eta_1<\eta_2<\eta_1+\frac{\alpha}{s}$ and define
$$t:=1-\frac{(\eta_2-\eta_1)s}{\alpha}\in (0,1).$$
Take $\tilde{s}\!=\!s$, $\tilde{p}\!=\!2$, $\sigma\!=\!\frac{2s}{\alpha}\!>\!1$,
$\max \Big\{ 2, 2\eta_1-\frac{t\alpha}{s}, 2\eta_1-\frac{2t(\frac{1}{\sigma}-\frac{\alpha}{m})}{2^*_{s}(\alpha)
-\frac{2\alpha}{m}}\cdot 2^*_{s}(\alpha),
2\eta_1-t\cdot 2^*_{s}(\alpha) \Big\} \!<\! {\tilde{q}}\!<\! 2\eta_1$,
$W(y)\equiv 1$ and $V(y)=\frac{  {|u(y)|}^  {\eta_1-\frac{\tilde{q}}{2} } {|v(y)|}^  {\eta_2-\frac{\tilde{q}}{2} }  }   {  |y'|^{\alpha}  }$ in Lemma \ref{lemma3.4}, where $y\!=\!(y',y'') \in \mathbb{R}^{m} \times \mathbb{R}^{n-m}$. Then (\ref{eq3.3}) becomes
\begin{equation}\label{Qa6.3}
|Q|^{ \frac{s}{n}+\frac{1}{\tilde{q}}-\frac{1}{2}} { \Big(  \frac{1}{|Q|} \int_{Q}V^{\sigma}dy    \Big) }^{ \frac{1}{{\tilde{q}} \sigma} }  \leq C_{\sigma}.
\end{equation}

Secondly, we verify condition (\ref{eq3.3}). Since $2\eta_1-t\cdot 2^*_{s}(\alpha)< {\tilde{q}}< 2\eta_1$, we can define
$$
 \rho:=1-\frac{2\eta_1-\tilde{q}}{2^{*}_{s}(\alpha)}\cdot \frac{1}{t} \in (0,1).$$
For any fixed $x\in {\R}^n$, replacing $Q$ by ball $B_{R}(x)$, we deduce by H\"{o}lder's inequality that
\begin{align}\label{Qa6.4}
R^{-n} \int_{B_R(x)}V^{\sigma}dy&=R^{-n} \int_{B_R(x)}\frac{  {|u|}^  {(\eta_1-\frac{\tilde{q}}{2})\sigma } {|v|}^  {(\eta_2-\frac{\tilde{q}}{2})\sigma }  }   {  |y'|^{\sigma \alpha}  }dy=R^{-n} \int_{B_R(x)} \frac{{|uv|}^  {(\eta_1-\frac{\tilde{q}}{2})\sigma } }{{|y'|^  { t \sigma {\alpha}    }   }  } \cdot {\frac{{|v|}^{  (\eta_2-\eta_1)\sigma }}{|y'|^  { (1-t) \sigma {\alpha}    }   }}dy \nonumber \\
&\leq R^{-n} \Big[\int_{B_R(x)} \frac{{|uv|}^  {(\eta_1-\frac{\tilde{q}}{2})\frac{\sigma}{t} } }{{|y'|^  {  \sigma {\alpha}    }   }  } dy\Big]^{t} \Big[\int_{B_R(x)} {\frac{{|v|}^{  \frac{(\eta_2-\eta_1)\sigma}{1-t} }}{|y'|^  {  \sigma {\alpha}    }   }}dy\Big]^{1-t} \nonumber \\
&= R^{-n} \Big[\int_{B_R(x)} \frac{{|uv|}^  {(\eta_1-\frac{\tilde{q}}{2})\frac{\sigma}{t} } }{{|y'|^  {  \sigma {\alpha}    }   }  } dy\Big]^{t} \Big[\int_{B_R(x)} {\frac{{|v|}^{  2 }}{|y'|^  {  2s   }   }}dy\Big]^{1-t}  \nonumber\\
&\leq R^{-n} \Big[\int_{B_R(x)} \frac{{|uv|}^  {(\eta_1-\frac{\tilde{q}}{2})\frac{\sigma}{t} } }{{|y'|^  {  \sigma {\alpha}    }   }  } dy\Big]^{t} \Big[C||v||_{{\dot{H}}^s(\R^{n})}^{2}\Big]^{1-t} .
\end{align}
From $2\eta_1-\frac{t\alpha}{s} < {\tilde{q}}< 2\eta_1$, we have $0<(\eta_1-\frac{\tilde{q}}{2}){\frac{\sigma}{t} }
<1$. Moreover, we have $\frac{  \sigma {\alpha}\rho }{ 1- (\eta_1-\frac{\tilde{q}}{2}){\frac{\sigma}{t} }
 }<m$ by ${\tilde{q}}>2\eta_1-\frac{2t(\frac{1}{\sigma}-\frac{\alpha}{m})}{2^*_{s}(\alpha)
-\frac{2\alpha}{m}}\cdot 2^*_{s}(\alpha)$. Recall that $\rho\in (0,1)$, it results to 
\begin{align} \label{Qa6.5}
&\int_{B_R(x)} \frac{{|uv|}^  {(\eta_1-\frac{\tilde{q}}{2})\frac{\sigma}{t} } }{{|y'|^  {   \sigma {\alpha}    }   }  } dy=\int_{B_R(x)} \frac{1 }{{|y'|^  {  \rho\sigma {\alpha}    }   }  } \frac{{|uv|}^  {(\eta_1-\frac{\tilde{q}}{2})\frac{\sigma}{t} } }{{|y'|^  { (1-\rho) \sigma {\alpha}    }   }  } dy  \nonumber \\
& \leq \Big[\int_{B_R(x)} \frac{1 }{{|y'|^  {  \frac{\rho\sigma {\alpha}}{1-(\eta_1-\frac{\tilde{q}}{2}){\frac{\sigma}{t} }}    }   }  } dy\Big]^{1-(\eta_1-\frac{\tilde{q}}{2}){\frac{\sigma}{t} }}  \Big[\int_{B_R(x)} \frac{{|uv|}}{{|y'|^  { \frac{t{\alpha}(1-\rho) }{(\eta_1-\frac{\tilde{q}}{2})}    }   }  } dy\Big]^{(\eta_1-\frac{\tilde{q}}{2}){\frac{\sigma}{t} }} \nonumber\\
& = CR^{-\rho \sigma \alpha +n -n(\eta_1-\frac{\tilde{q}}{2}){\frac{\sigma}{t} }} \Big[\int_{B_R(x)} \frac{{|uv|}}{{|y'|^  r   }  } dy\Big]^{(\eta_1-\frac{\tilde{q}}{2}){\frac{\sigma}{t} }},
\end{align}
where $r:=\frac{t{\alpha}(1-\rho) }{\eta_1-\frac{\tilde{q}}{2}} =\frac{2\alpha}{2^{*}_{s}(\alpha)}$. Put \eqref{Qa6.5} into \eqref{Qa6.4}, we get
\begin{align*}
R^{-n} \int_{B_R(x)}V^{\sigma}dy&\leq C R^{-\rho \sigma \alpha t +n(t-1) -n(\eta_1-\frac{\tilde{q}}{2})\sigma}   \Big[\int_{B_R(x)} \frac{{|uv|}}{{|y'|^  r   }  } dy\Big]^{(\eta_1-\frac{\tilde{q}}{2})\sigma} ||v||_{{\dot{H}}^s(\R^{n})}^{2(1-t)}.
\end{align*}
Therefore,
\begin{align*}
&R^{s+\frac{n}{{\tilde{q}}}-\frac{n}{2}} { \Big(  R^{-n} \int_{ B_R(x) }V^{\sigma}dy    \Big) }^{ \frac{1}{{\tilde{q}} \sigma} }  \\
\leq & R^{s+\frac{n}{{\tilde{q}}}-\frac{n}{2}} \Big\{ C R^{-\rho \sigma \alpha t +n(t-1) -n(\eta_1-\frac{\tilde{q}}{2})\sigma}   \Big[\int_{B_R(x)} \frac{{|uv|}}{{|y'|^  r   }  } dy\Big]^{(\eta_1-\frac{\tilde{q}}{2})\sigma} ||v||_{{\dot{H}}^s(\R^{n})}^{2(1-t)}  \Big\}^{\frac{1}{{\tilde{q}} \sigma}}  \\
= & C \Big\{ R^{ \tilde{q} \sigma(s+\frac{n}{{\tilde{q}}}-\frac{n}{2})-\rho \sigma \alpha t +n(t-1) -n(\eta_1-\frac{\tilde{q}}{2})\sigma}   \Big[\int_{B_R(x)} \frac{{|uv|}}{{|y'|^  r   }  } dy\Big]^{(\eta_1-\frac{\tilde{q}}{2})\sigma} ||v||_{{\dot{H}}^s(\R^{n})}^{2(1-t)}  \Big\}^{\frac{1}{{\tilde{q}} \sigma}}  \\
= & C \Big\{ R^{ \tilde{q} \sigma(s+\frac{n}{{\tilde{q}}}-\frac{n}{2})-\rho \sigma \alpha t +n(t-1) -n(\eta_1-\frac{\tilde{q}}{2})\sigma}   \Big[\int_{B_R(x)} \frac{{|uv|}}{{|y'|^  r   }  } dy\Big]^{(\eta_1-\frac{\tilde{q}}{2})\sigma} \Big\}^{\frac{1}{{\tilde{q}} \sigma}} ||v||_{{\dot{H}}^s(\R^{n})}^{\frac{{2^{*}_{s}(\alpha)}-2\eta_1}{\tilde{q}}} \\
= & C \Big\{  R^{  \frac{\tilde{q} \sigma(s+\frac{n}{{\tilde{q}}}-\frac{n}{2})-\rho \sigma \alpha t +n(t-1) -n(\eta_1-\frac{\tilde{q}}{2})\sigma}{{(\eta_1-\frac{\tilde{q}}{2})\sigma}}   }  \cdot  {  \int_{B_R(x)}  {\frac{{|uv|}}{|y'|^r }}dy   }   \Big\}^{  \frac{(\eta_1-\frac{\tilde{q}}{2})}{\tilde{q}}  }  ||v||_{{\dot{H}}^s(\R^{n})}^{\frac{{2^{*}_{s}(\alpha)}-2\eta_1}{\tilde{q}}} \\
= & C \Big\{  R^{  \frac{\tilde{q} \sigma(s+\frac{n}{{\tilde{q}}}-\frac{n}{2})-\rho \sigma \alpha t +n(t-1) }{{(\eta_1-\frac{\tilde{q}}{2})\sigma}}   }  \cdot R^{-n}\cdot  {  \int_{B_R(x)}  {\frac{{|uv|}}{|y'|^r }}dy   }   \Big\}^{  \frac{(\eta_1-\frac{\tilde{q}}{2})}{\tilde{q}}  }  ||v||_{{\dot{H}}^s(\R^{n})}^{\frac{{2^{*}_{s}(\alpha)}-2\eta_1}{\tilde{q}}} \\
= & C \Big\{  R^{   n-2s+r }  \cdot R^{-n}  \cdot {  \int_{B_R(x)}  {\frac{{|uv|}}{|y'|^  { r }   }}dy   }   \Big\}^{  \frac{(\eta_1-\frac{\tilde{q}}{2})}{\tilde{q}}  } ||v||_{{\dot{H}}^s(\R^{n})}^{\frac{{2^{*}_{s}(\alpha)}-2\eta_1}{\tilde{q}}}\\
=&  C {||(uv)||}^{  \frac{(\eta_1-\frac{\tilde{q}}{2})}{\tilde{q}}  }_{ L^{1,n-2s+r }(\R^{n},|y'|^{-r})}
||v||_{{\dot{H}}^s(\R^{n})}^{\frac{{2^{*}_{s}(\alpha)}-2\eta_1}{\tilde{q}}}
:=C_{\sigma}.
\end{align*}
Since $u={\ell}_sg$ and $v={\ell}_sf$, and by Lemma \ref{lemma3.4},
\begin{align*}
&\int_{ \R^{n} }  \frac{ |u(y)|^{\eta_1} |v(y)|^{\eta_2} }  {  |y'|^{\alpha} } dy = \int_{ \R^{n} } V(y) {|{\ell}_sg(y)|}^{\frac{\tilde{q}}{2}}{|{\ell}_sf(y)|}^{\frac{\tilde{q}}{2}} dy \\
&\leq \Big[\int_{ \R^{n} } V(y) {|{\ell}_sg(y)|}^{\tilde{q}} dy\Big]^{\frac{1}{2}} \Big[\int_{ \R^{n} } V(y) {|{\ell}_sf(y)|}^{\tilde{q}} dy\Big]^{\frac{1}{2}}   \\
& \leq (CC_{\sigma})^{\tilde{q}} ||g||^{\frac{\tilde{q}}{2}}_{L^2} ||f||^{\frac{\tilde{q}}{2}}_{L^2}   \leq C ||u||_{{\dot{H}}^s(\R^{n})}^{\frac{\tilde{q}}{2}}
 ||v||_{{\dot{H}}^s(\R^{n})}^{\frac{\tilde{q}}{2}} {||(uv)||}^{\eta_1-\frac{\tilde{q}}{2}}_{ L^{1,n-2s+r }(\R^{n},|y'|^{-r})}
||v||_{{\dot{H}}^s(\R^{n})}^{{2^{*}_{s}(\alpha)}-2\eta_1}\\
& = C ||u||_{{\dot{H}}^s(\R^{n})}^{\frac{\tilde{q}}{2}}
 ||v||_{{\dot{H}}^s(\R^{n})}^{\frac{\tilde{q}}{2}+(\eta_2-\eta_1)} {||(uv)||}^{\eta_1-\frac{\tilde{q}}{2}}_{ L^{1,n-2s+r }(\R^{n},|y'|^{-r})}.
\end{align*}
Then, for any $\theta=\frac{{\tilde{q}}}{ 2^*_{s}(\alpha) }$ satisfying $\bar{\theta}<\theta<\frac{2\eta_1}{2^*_{s}(\alpha)}<1$, we have
\begin{equation*}
 \Big( \int_{ \R^{n} }  \frac{ |u(y)|^{\eta_1} |v(y)|^{\eta_2} }  {  |y'|^{\alpha} } dy  \Big)^{ \frac{1}{  2^*_{s} (\alpha)  }}  \leq C ||u||_{{\dot{H}}^s(\R^{n})}^{\frac{\theta}{2}}
 ||v||_{{\dot{H}}^s(\R^{n})}^{\frac{\theta}{2}+\frac{\eta_2-\eta_1}{2^*_{s} (\alpha)}} ||(uv)||^{\frac{\eta_1}{2^*_{s} (\alpha)}-\frac{\theta}{2}}_{  L^{1,n-2s+r}(\R^{n},|y'|^{-r}) },
\end{equation*}
where $s \in (0,1)$, $0<\alpha<2s<m<n$, $r=\frac{2\alpha}{ 2^*_{s}(\alpha) }$, $C=C(n,m,s,\alpha,\eta_1,\eta_2)>0$ is a constant, $\bar{\theta}=\max \Big\{ \frac{2}{2^*_{s}(\alpha)},\frac{2\eta_1-\frac{t\alpha}{s}}{2^*_{s}(\alpha)}, \frac{2\eta_1}{2^*_{s}(\alpha)}
-\frac{2t(\frac{\alpha}{2s}-\frac{\alpha}{m})}{2^*_{s}(\alpha)
-\frac{2\alpha}{m}},
\frac{2\eta_1}{2^*_{s}(\alpha)}-t \Big\}$ and $t=1-\frac{(\eta_2-\eta_1)s}{\alpha}$.
\qed  \\



\noindent\textbf{Proof of Corollary \ref{coRo6.2}}

For any $u \in {D}^{1,p}(\R^{n})(n\geq 2)$, recall that $|u(x)| \!\leq\! C {\ell}_{1}(| {\nabla u} |)(x)$ for some $C=C(n)>0$.

Let $2\leq p<m<n$, $\eta_1+\eta_2=p^*(\alpha)$ with $1<\eta_1<\eta_2<\eta_1+\alpha$ and denote
$$t:=1-\frac{\eta_2-\eta_1}{\alpha}\in (0,1).$$
Take $\tilde{s}\!=\!1$, $\tilde{p}\!=\!p$, $\sigma\!=\!\frac{p}{\alpha}\!>\!1$, $\max \Big\{ p, 2\eta_1-t\alpha, 2\eta_1-\frac{t\alpha(1-\frac{p}{m})}{p^*(\alpha)
-\frac{p\alpha}{m}}\cdot p^*(\alpha),
2\eta_1-t\cdot p^*(\alpha) \Big\} \!<\! {\tilde{q}}\!<\! 2\eta_1$,
$W(y)\equiv 1$ and $V(y)=\frac{  {|u(y)|}^  {\eta_1-\frac{\tilde{q}}{2} } {|v(y)|}^  {\eta_2-\frac{\tilde{q}}{2} }  }   {  |y'|^{\alpha}  }$ in Lemma \ref{lemma3.4}, where $y\!=\!(y',y'') \in \mathbb{R}^{m} \times \mathbb{R}^{n-m}$. The remain argument is similar to the case in $ {\dot{H}}^s(\R^{n})$ with $\rho:=1-\frac{2\eta_1-\tilde{q}}{p^{*}(\alpha)}\cdot \frac{1}{t} \in (0,1)$ while $r:=\frac{tp{\alpha}(1-\rho) }{2\eta_1-\tilde{q}} =\frac{p\alpha}{p^{*}(\alpha)}$. Actually, for $\theta=\frac{{\tilde{q}}}{ p^*(\alpha) }$ satisfying $\bar{\theta}<\theta<\frac{2\eta_1}{p^*(\alpha)}$, we have
\begin{equation*}
 \Big( \int_{ \R^{n} }  \frac{ |u|^{\eta_1} |v|^{\eta_2} }  {  |y'|^{\alpha} } dy  \Big)^{ \frac{1}{  p^* (\alpha)  }}  \leq C ||u||_{{D}^{1,p}(\R^{n})}^{\frac{\theta}{2}}
 ||v||_{{D}^{1,p}(\R^{n})}^{\frac{\theta}{2}+\frac{\eta_2-\eta_1}{p^* (\alpha)}} ||(uv)||^{\frac{\eta_1}{p^*(\alpha)}-\frac{\theta}{2}}_{  L^{{p}/{2},n-p+r}(\R^{n},|y'|^{-r}) },
\end{equation*}
where $r=\frac{p\alpha}{p^{*}(\alpha)}$, $\bar{\theta}=\max \Big\{ \frac{p}{p^*(\alpha)}, \frac{2\eta_1-t\alpha}{p^*(\alpha)}, \frac{2\eta_1}{p^*(\alpha)}-\frac{t\alpha(1-\frac{p}{m})}{p^*(\alpha)
-\frac{p\alpha}{m}},
\frac{2\eta_1}{p^*(\alpha)}-t \Big\}$ and $t=1-\frac{\eta_2-\eta_1}{\alpha}$.  \qed  \\

\noindent\textbf{Proof of  Theorems \ref{th6.1}-\ref{th6.2}}

Imitate the proof of Theorem \ref{th1.1}, we shall replace Proposition \ref{prop1.4} by Proposition \ref{prop6.1} or Corollary \ref{coRo6.2} respectively, in proving Theorems \ref{th6.1}-\ref{th6.2}. It is standard as in \cite{RFPP} that $u,v\in  {D}^{1,p}(\R^{n})\cap C^{1}(\mathbb{R}^{n} \setminus\{0\})$ in (\ref{Sye6.2}).\qed  \\

\end{document}